\theoremstyle{definition}
\newtheorem{Def}{Definition}[section]
\newtheorem{theorem}[Def]{Theorem}
\newtheorem{lemma}[Def]{Lemma}
\newtheorem{cor}[Def]{Corollary}
\newtheorem{remark}[Def]{Remark}
\def\Diff{\operatorname{Diff}}
\def\exp{\operatorname{exp}}
\def\supp{\operatorname{supp}}
\def\sign{\operatorname{sign}}
\def\Sp{\operatorname{Sp}}
\def\Sign{\operatorname{Sign}}
\def\cl{\operatorname{cl}}
\def\scl{\operatorname{scl}}
\begin{document}

\title[On stable commutator length in hyperelliptic mapping class groups]
{On stable commutator length in hyperelliptic mapping class groups}

\author[D. Calegari]{Danny Calegari}
\address{University of Chicago, Chicago, Ill 60637 USA}
\email{dannyc@math.uchicago.edu}

\author[N. Monden]{Naoyuki Monden}
\address{Department of Mathematics, Graduate School of Science, Kyoto University, Kyoto 606-8502 Japan}
\email{n-monden@math.kyoto-u.ac.jp}

\author[M.Sato]{Masatoshi Sato}
\address{Department of Mathematics Education, Faculty of Education, Gifu University, Gifu 501-1193, Japan}
\email{msato@gifu-u.ac.jp}

\begin{abstract}
We give a new upper bound on the stable commutator length of Dehn twists in hyperelliptic mapping class groups,
and determine the stable commutator length of some elements.
We also calculate values and the defects of homogeneous quasimorphisms derived from $\omega$-signatures,
and show that they are linearly independent in the mapping class groups of pointed 2-spheres when the number of points is small.
\end{abstract}

\maketitle

\setcounter{secnumdepth}{2}
\setcounter{section}{0}

\section{Introduction}
The aim of this paper is to investigate stable commutator length
in hyperelliptic mapping class groups and in mapping class groups of pointed 2-spheres.
Given a group $G$ and an element $x\in[G,G]$,
the commutator length of $x$, denoted by $\cl_{G}(x)$, 
is the smallest number of commutators in $G$ whose product is $x$,
and the stable commutator length of $x$ is defined by the limit $\scl_{G}(x):=\lim_{n\rightarrow \infty} \cl_{G}(x^{n})/n$ (see Definition~\ref{def:scl} for details). 

We investigate stable commutator length in two groups $\mathcal{M}_0^m$ and $\mathcal{H}_g$.
Let $m$ be a positive integer greater than $3$.
Choose $m$ distinct points $\{q_i\}_{i=1}^m$ in a 2-sphere $S^2$.
Let $\Diff_+(S^2,\{q_i\}_{i=1}^m)$ denote the set of all orientation-preserving diffeomorphisms
in $S^2$ which preserve $\{q_i\}_{i=1}^m$ setwise with the $C^{\infty}$-topology.
We define the mapping class group of the $m$-pointed 2-sphere by $\mathcal{M}_0^m=\pi_0\Diff_+(S^2,\{q_i\}_{i=1}^m)$.
Let $\Sigma_{g}$ be a closed connected oriented surface of genus $g\geq 1$.
An involution $\iota :\Sigma_{g}\rightarrow \Sigma_{g}$ defined as in Figure~\ref{fig4} is called the hyperelliptic involution.
\begin{figure}[htbp]
 \begin{center}
\includegraphics*[width=8cm]{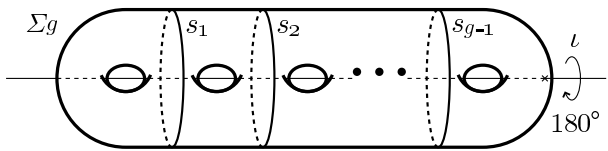}
 \end{center}
 \caption{hyperelliptic involution $\iota$ and the curves $s_{1},\ldots s_{g-1}$}
 \label{fig4}
\end{figure}
Let ${\mathcal M}_{g}$ denote the mapping class group of $\Sigma_{g}$, 
that is the group of isotopy classes of orientation-preserving diffeomorphisms of $\Sigma_{g}$,
and let ${\mathcal H}_{g}$ be the centralizer of the isotopy class of a hyperelliptic involution in ${\mathcal M}_{g}$,
which is called the hyperelliptic mapping class group of genus $g$.
Note that ${\mathcal M}_{g}={\mathcal H}_{g}$ when $g=1, \ 2$. 
Since there exists a surjective homomorphism $\mathcal{P}:\mathcal{H}_g\to \mathcal{M}_0^{2g+2}$ with finite kernel (see Lemma \ref{lemma:exact seq} and the paragraph before Remark \ref{rem:meyer function}),
these two groups have the same stable commutator length.

Let $s_{0}$ be a nonseparating curve on $\Sigma_{g}$ satisfying $\iota(s_0)=s_0$,
and let $s_{h}$ be a separating curve in Figure~\ref{fig4} for $h=1,\ldots, g-1$.
We denote by $t_{s_{j}}$ the Dehn twist about $s_{j}$ for $j=0,1,\ldots, g-1$. 
In general, it is difficult to compute stable commutator length, but those of some mapping classes are known.
In the mapping class group of a compact oriented surface with connected boundary,
Baykur, Korkmaz and the second author \cite{BKM} determined the commutator length of the Dehn twist about a boundary curve.
In the mapping class group of a closed oriented surface, interesting lower bounds on $\scl$ of Dehn twists are obtained using Gauge theory. 
Endo-Kotschick \cite{endo2001bcn}, and Korkmaz \cite{korkmaz2004scl} proved that
$1/(18g-6)\leq \scl_{{\mathcal M}_{g}}(t_{s_{j}})$ for $j=0,1, \ldots, g-1$.
For technical reasons, this result is stated in \cite{endo2001bcn} only for separating curves. 
This technical assumption is removed in \cite{korkmaz2004scl}. 
The second author \cite{monden2012ubs} also showed that $1/4(2g+1)\leq \scl_{{\mathcal H}_{g}}(t_{s_{0}})$ and 
$h(g-h)/g(2g+1)\leq \scl_{{\mathcal H}_{g}}(t_{s_{h}})$ for $h=1,\ldots,g-1$.

Stable commutator length on a group is closely related to functions on the group called homogeneous quasimorphisms through Bavard's Duality Theorem.
Homogeneous quasimorphisms are homomorphisms up to bounded error called the defect (see Definition~\ref{def:homo} for details).
By Bavard's theorem, if we obtain a homogeneous quasimorphism on the group and calculate its defect, we also obtain a lower bound on stable commutator length.
Actually, Bestvina and Fujiwara \cite[Theorem 12]{bestvina2002bcs} proved that
the spaces of homogeneous quasimorphisms on $\mathcal{M}_g$ and $\mathcal{M}_0^m$ are infinite dimensional
when $g\ge2$ and $m\ge 5$, respectively.
However, it is hard to compute explicit values of these quasimorphisms and their defects.
To compute stable commutator length,
we consider computable quasimorphisms derived from $\omega$-signature in Gambaudo-Ghys' paper \cite{gambaudo2005bs} on symmetric mapping class groups.

In Section~\ref{section:construction},
we review symmetric mapping class groups,
which are defined by Birman-Hilden as generalizations of hyperelliptic mapping class groups.
We reconsider cobounding functions of $\omega$-signatures as a series of quasimorphisms $\phi_{m,j}$ on a symmetric mapping class group $\pi_0C_g(t)$.
Since there exists a surjective homomorphism $\mathcal{P}:\pi_0C_g(t)\to \mathcal{M}_0^m$ with finite kernel, 
the homogenizations $\bar{\phi}_{m,j}$ induce homogeneous quasimorphisms on $\mathcal{M}_0^m$. 
Let $\sigma_i\in\mathcal{M}_0^m$ be a half twist which permutes the $i$-th point and the $(i+1)$-th point. 
We denote by $\tilde{\sigma}_i\in\pi_0C_g(t)$ a lift of $\sigma_i$, which will be defined in Section \ref{section:symmetric mcg}.

In Section~\ref{section:calc-quasi},
we calculate $\phi_{m,j}$ and their homogenizations $\bar{\phi}_{m,j}$.
\begin{theorem}\label{theorem:meyer function}
Let $r$ be an integer such that $2\le r\le m$.
Then, we have
\begin{enumerate}
\def\theenumi{\roman{enumi}}
\item
\[
\phi_{m,j}(\tilde{\sigma}_1\cdots\tilde{\sigma}_{r-1})=\frac{2(r-1)j(m-j)}{m(m-1)},
\]
\item
\[
\bar{\phi}_{m,j}(\sigma_1\cdots\sigma_{r-1})=-\frac{2}{r}\left\{\frac{jr(m-j)(m-r)}{m^2(m-1)}+\left(\frac{rj}{m}-\left[\frac{rj}{m}\right]-\frac{1}{2}\right)^2-\frac{1}{4}\right\},
\]
where $[x]$ denotes the greatest integer $\le x$.
\end{enumerate}
\end{theorem}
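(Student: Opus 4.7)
The plan is to prove both parts by direct computation, leveraging the construction of $\phi_{m,j}$ from Section~\ref{section:construction} as a cobounding function of the $\omega$-signature 2-cocycle at $\omega=e^{2\pi i j/m}$. The key algebraic identity behind part (ii) will be the classical braid relation $(\sigma_1\cdots\sigma_{r-1})^r=\Delta_r^2$, the full twist on the first $r$ strands, which descends in $\mathcal{M}_0^m$ to the Dehn twist $t_{\delta_r}$ around a simple closed curve $\delta_r$ enclosing the first $r$ marked points.

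For part (i), I would first note that the $\tilde\sigma_i$ are all conjugate in $\pi_0 C_g(t)$, so $\phi_{m,j}(\tilde\sigma_i)$ takes the same value $2j(m-j)/m(m-1)$ for each $i$, extracted from the cobounding formula by evaluating the $\omega$-signature on a single elementary Burau-type block. The factor $j(m-j)/m(m-1)$ arises as the normalized dimension of the $j$-th character component of the permutation representation. The next step is to iterate the defect relation $\phi_{m,j}(xy)=\phi_{m,j}(x)+\phi_{m,j}(y)+c(x,y)$, with $c$ the Meyer-type signature cocycle, using $x=\tilde\sigma_1\cdots\tilde\sigma_{k-1}$ and $y=\tilde\sigma_k$. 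I expect the cocycle corrections $c(x,y)$ to vanish at these consecutive half twists via Novikov additivity applied to the relevant Lagrangian splitting, leaving only the sum of the $r-1$ individual contributions, which equals the stated formula.

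For part (ii), I would first compute $\bar\phi_{m,j}(t_{\delta_r})$; since $(\sigma_1\cdots\sigma_{r-1})^r=t_{\delta_r}$ and $\bar\phi_{m,j}$ is homogeneous, this gives $\bar\phi_{m,j}(\sigma_1\cdots\sigma_{r-1})=\bar\phi_{m,j}(t_{\delta_r})/r$. Because $t_{\delta_r}$ is central in the stabilizer of $\delta_r$, the sequence $\phi_{m,j}(t_{\delta_r}^n)$ grows linearly in $n$ up to bounded error controllable by the cocycle, and the slope is an $\omega$-signature of a $\mathbb{Z}/m$-cyclic branched cover of the disc over $r$ marked points. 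The characteristic appearance of $[rj/m]$ and the quadratic fractional part $(rj/m-[rj/m]-1/2)^2-1/4$ is precisely the shape of Brieskorn--Hirzebruch--Zagier type formulas for signatures of such cyclic covers; I would invoke such a formula or derive it directly by diagonalizing the associated Hermitian intersection form in the spirit of Gambaudo--Ghys. Dividing by $r$ then yields (ii).

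The main obstacle will be part (ii): extracting $\bar\phi_{m,j}(t_{\delta_r})$ requires controlling the quasimorphism error term with enough precision to isolate the fractional-part quadratic, and the discontinuity of $[rj/m]$ at integer values of $rj/m$ forces careful separate treatment of the degenerate cases where $m \mid rj$. Part (i), by contrast, should reduce to a finite combinatorial signature calculation once the cocycle machinery is unpacked and the vanishing of the cross-terms $c(\tilde\sigma_1\cdots\tilde\sigma_{k-1},\tilde\sigma_k)$ is verified.
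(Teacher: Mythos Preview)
Your plan has a genuine gap in part (i). You propose to evaluate $\phi_{m,j}(\tilde{\sigma}_i)=2j(m-j)/m(m-1)$ directly, claiming the factor ``arises as the normalized dimension of the $j$-th character component of the permutation representation.'' That is not a computation: $\phi_{m,j}$ is only defined implicitly as the unique cobounding function of the cocycle $\tau_{m,j}$, and there is no formula that reads off its value on a single generator from a Burau block without some further normalisation input. The paper obtains this value only \emph{after} essentially doing the work you defer to part (ii): it diagonalises the $\tilde{\sigma}$-action on $V^{\omega^j}$ explicitly, computes $\sum_{k=1}^{rm}\tau_{m,j}(\tilde{\sigma}^k,\tilde{\sigma})$ as a sum of signs of products of sines (this is exactly where the fractional-part quadratic appears), deduces $\phi_{m,j}(\tilde{\sigma})-\bar{\phi}_{m,j}(\tilde{\sigma})$ for all $r$ (Lemma~\ref{lemma:homogenization}), and then specialises to $r=m$ where $(\sigma_1\cdots\sigma_{m-1})^m$ has finite order so $\bar{\phi}_{m,j}$ vanishes. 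This yields $\phi_{m,j}(\tilde{\sigma}_1\cdots\tilde{\sigma}_{m-1})=2j(m-j)/m$, and only then, combined with cocycle vanishing, the value on a single $\tilde{\sigma}_i$. So the logical order is reversed from yours: part (i) is a corollary of the computation underlying (ii), not a stepping stone toward it.

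Your cocycle-vanishing claim $\tau_{m,j}(\tilde{\sigma}_1\cdots\tilde{\sigma}_{k-1},\tilde{\sigma}_k)=0$ is also not the one the paper uses, and ``Novikov additivity applied to the relevant Lagrangian splitting'' does not settle it: $\tilde{\sigma}_{k-1}$ and $\tilde{\sigma}_k$ overlap on $V^{\omega^j}$, so there is no obvious Lagrangian splitting. The paper instead regroups $\tilde{\sigma}_1\cdots\tilde{\sigma}_{r-1}$ as a conjugate of $\varphi\psi$ with $\varphi$ the product of odd-indexed and $\psi$ the product of even-indexed generators, and checks by direct linear algebra in the explicit matrix representation that $\tau_{m,j}(\varphi,\psi)=0$ and that the cocycle vanishes when building up $\varphi$ and $\psi$ one (commuting) factor at a time. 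Your route via $t_{\delta_r}$ and Brieskorn--Hirzebruch--Zagier formulas for (ii) is reasonable as an alternative to the paper's explicit diagonalisation, but it cannot rescue (i): you still need an independent evaluation of $\phi_{m,j}$ somewhere, and the only accessible anchor is the finite-order element at $r=m$.
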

Since it requires straightforward and lengthy calculations to prove it, we leave it until the last section.
A computer calculation shows that the $([m/2]-1)\times ([m/2]-1)$ matrix whose $(i,j)$-entry is $\bar{\phi}_{m,j+1}(\tilde{\sigma}_1\cdots\tilde{\sigma}_i)$ is nonsingular when $4\le m\le 30$.
Thus, we have:
\begin{cor}
The set $\{\bar{\phi}_{m,j}\}_{j=2}^{[m/2]}$ is linearly independent when $4\le m\le 30$.
\end{cor}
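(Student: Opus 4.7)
The plan is to exhibit a specific $([m/2]-1)\times([m/2]-1)$ matrix of evaluations whose nonsingularity directly forces linear independence of the quasimorphisms, and then to reduce the problem to a finite computer check using the explicit formulas of Theorem~\ref{theorem:meyer function}(ii).

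First I would observe that if $\sum_{j=2}^{[m/2]} c_j \bar{\phi}_{m,j} \equiv 0$, then evaluating both sides at the $[m/2]-1$ elements $x_i := \sigma_1 \sigma_2 \cdots \sigma_i$ (for $i = 1, \ldots, [m/2]-1$) yields a linear system $M^{T} c = 0$, where $M_{i,j} = \bar{\phi}_{m,j+1}(x_i)$. Thus it suffices to check that $\det M \neq 0$ for each $m \in \{4, \ldots, 30\}$; this is exactly the matrix referred to in the statement, with the lifts $\tilde{\sigma}_k$ replaced by the $\sigma_k$ since $\bar{\phi}_{m,j+1}$ factors through $\mathcal{P}$.

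By Theorem~\ref{theorem:meyer function}(ii) applied with $r = i+1$, each entry has the closed form
\begin{equation*}
M_{i,j} = -\frac{2}{i+1}\left\{\frac{(j+1)(i+1)(m-j-1)(m-i-1)}{m^2(m-1)} + \left(\frac{(i+1)(j+1)}{m} - \left[\frac{(i+1)(j+1)}{m}\right] - \frac{1}{2}\right)^2 - \frac{1}{4}\right\},
\end{equation*}
so for each fixed $m$ the matrix $M$ is explicit and rational. The proof is then completed by a symbolic computation in exact rational arithmetic, iterating over the $27$ values $m = 4, 5, \ldots, 30$ and verifying nonvanishing of the determinant in each case.

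The hard part is conceptual rather than mechanical: the fractional-part contributions $\{(i+1)(j+1)/m\}$ depend on $m$ in a genuinely arithmetic way, so I would not expect a uniform, closed-form argument covering all $m$ simultaneously. Consequently the natural strategy is exactly the case-by-case verification signaled by the phrase ``A computer calculation shows'' in the statement, and the restriction $4 \le m \le 30$ simply reflects how far one is willing to push the finite check; extending the range is a matter of more computation, not a new idea.
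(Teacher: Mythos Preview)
Your proposal is correct and matches the paper's approach exactly: the paper states just before the corollary that a computer calculation shows the $([m/2]-1)\times([m/2]-1)$ matrix with $(i,j)$-entry $\bar{\phi}_{m,j+1}(\tilde{\sigma}_1\cdots\tilde{\sigma}_i)$ is nonsingular for $4\le m\le 30$, which is precisely your matrix $M$ (the switch from $\tilde{\sigma}_k$ to $\sigma_k$ is harmless since the homogenized quasimorphisms descend through $\mathcal{P}$). Your explicit substitution $r=i+1$, $j\mapsto j+1$ into Theorem~\ref{theorem:meyer function}(ii) to produce rational entries and then check determinants symbolically is exactly what the paper's one-line ``computer calculation'' amounts to.
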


In Section~\ref{section:meyer function}, we calculate the defects of the homogenizations of these quasimorphisms.
In particular, we determine the defect of $\bar{\phi}_{m,m/2}$ when $m$ is even.
Actually, $\bar{\phi}_{m,m/2}$ is the same as the homogenization of Meyer function on the hyperelliptic mapping class group $\mathcal{H}_g$.
\begin{theorem}\label{thm:defect}
Let $D(\phi_{m,j})$ and $D(\bar{\phi}_{m,j})$ be the defects of the quasimorphisms $\phi_{m,j}$ and $\bar{\phi}_{m,j}$, respectively.
\begin{enumerate}
\def\theenumi{\roman{enumi}}
\item 
For $j=1,2,\ldots,[m/2]$,
\[
D(\bar{\phi}_{m,j})\le D({\phi}_{m,j})\le m-2.
\]
\item When $m$ is even and $j=m/2$,
\[
D(\bar{\phi}_{m,m/2})=m-2.
\]
\end{enumerate}
\end{theorem}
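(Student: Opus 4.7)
The plan is to exploit the cocycle structure of $\phi_{m,j}$. By construction in Section~\ref{section:construction}, $\phi_{m,j}$ is a cobounding function of a 2-cocycle $\tau_{m,j}$ on $\pi_0 C_g(t)$ built from the $\omega$-signature with $\omega = e^{2\pi i j/m}$; that is, $\phi_{m,j}(a)+\phi_{m,j}(b)-\phi_{m,j}(ab)=\tau_{m,j}(a,b)$, so $D(\phi_{m,j})=\|\tau_{m,j}\|_\infty$. This reduces Theorem~\ref{thm:defect} to controlling the sup-norm of $\tau_{m,j}$, together with a homogenization comparison.

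For part (i), the first step is to prove $\|\tau_{m,j}\|_\infty \le m-2$. By its $\omega$-signature construction, $\tau_{m,j}(a,b)$ is the signature of a Hermitian intersection form on a subspace of the $\omega$-eigenspace of the cyclic monodromy acting on the middle homology of a 4-manifold built from $a$ and $b$. A Riemann--Hurwitz computation for the $m$-fold cyclic cover of $S^2$ branched at $m$ points bounds the complex dimension of this eigenspace by $m-2$, and since $|\Sign(H)|\le \dim_{\mathbb{C}} V$ for any Hermitian form $H$ on $V$, the bound follows. The inequality $D(\bar{\phi}_{m,j})\le D(\phi_{m,j})$ then comes from an averaging argument: $\delta\bar{\phi}_{m,j}$ is a Ces\`aro-type limit of translates of $\tau_{m,j}$ (via $\bar{\phi}_{m,j}=\lim_n \phi_{m,j}(\cdot^n)/n$), and such averaging cannot increase the sup-norm.

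For part (ii), I would establish $D(\bar{\phi}_{m,m/2})\ge m-2$ by exhibiting an explicit pair $(a,b)$ in $\pi_0 C_g(t)$ on which $\tau_{m,m/2}$ attains $m-2$ and whose iterates continue to realize this extremum, so that the homogenization limit preserves it. When $j=m/2$, so $\omega=-1$, the excerpt notes that $\bar{\phi}_{m,m/2}$ is the homogenized Meyer function on $\mathcal{H}_g$, and for Meyer's signature cocycle it is classical that the sup-norm $2g=m-2$ is attained on specific pairs in $\Sp(2g,\mathbb{Z})$; lifting such a pair through the hyperelliptic representation yields the candidate in $\pi_0 C_g(t)$.

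The main obstacle is precisely this lower bound in (ii): a priori the Ces\`aro averaging that produces $\bar{\phi}_{m,m/2}$ from $\phi_{m,m/2}$ could strictly reduce the defect below $\|\tau_{m,m/2}\|_\infty$. To rule this out I would use Novikov additivity to express $\tau_{m,m/2}((ab)^n,\cdot)$ as a coherent sum of signature contributions from the iterated gluings, and select the extremal pair $(a,b)$ so that these contributions add with a consistent sign; equivalently, one arranges that the commutator of $a,b$ lies in a subgroup on which the cocycle vanishes, so no cancellation occurs in the limit.
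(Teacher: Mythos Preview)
Your argument for part~(i) has a genuine gap. The inequality $D(\bar{\phi})\le D(\phi)$ does \emph{not} follow from a Ces\`aro averaging argument: in general one only gets $D(\bar{\phi})\le 2D(\phi)$ (see the Remark immediately following the statement in the paper, citing \cite{calegari2009scl}), and this factor of~$2$ is sharp. The difficulty is that $\delta\bar{\phi}(a,b)$ involves $\bar{\phi}(ab)=\lim_n \phi((ab)^n)/n$, and $(ab)^n$ is not related to $a^n,b^n$ by a single application of $\tau$; the mixing term contributes a second copy of $D(\phi)$. The paper's Lemma~\ref{lem:upper bound} obtains the sharp bound $D(\bar{\phi})\le D(\phi)$ only under the additional hypotheses that $\phi$ is antisymmetric and a class function (which do hold for $\phi_{m,j}$), and the proof is not by averaging: it uses an algebraic identity (Lemma~\ref{lem:a3b3}) together with a telescoping argument over powers of~$3$ to control $|\phi((ab)^{3^n})-\phi(a^{3^n}b^{3^n})|$. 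You would need to supply this or an equivalent mechanism.

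A secondary issue: your bound $\|\tau_{m,j}\|_\infty\le m-2$ via the eigenspace dimension is not quite immediate. The eigenspace $V^{\omega^j}$ does have dimension $m-2$, but $\tau_{m,j}(a,b)$ is the signature of a form on $H_1(P;V^{\omega^j})$, whose dimension can be as large as $2(m-2)$. The paper instead invokes Turaev's alternative description of the Meyer cocycle (via \cite{turaev1987fsc} and \cite{endo2005srm}) as a signature on a space of rank at most $m-2$.

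For part~(ii) your outline is in the right spirit but too vague to constitute a proof; ``iterates continue to realize this extremum'' and ``commutator lies in a subgroup where the cocycle vanishes'' both need to be made precise. The paper's approach is quite different and more direct: it exhibits a specific pair $(c_2^2c_4^2\cdots c_{2g}^2,\, d_1^+d_1^-\cdots d_g^+d_g^-)$ in $\mathcal{H}_g$ whose factors pairwise commute across the $g$ blocks, so that by Lemma~\ref{quasi1} the value of $\delta\bar{\phi}_g$ decomposes as a sum of $g$ independent genus-one contributions, each of which is then computed to equal $-2$ by an explicit $2\times 2$ symplectic-matrix calculation via Lemma~\ref{lem:matrix rep}. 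No limiting or additivity argument on iterates is needed once one works directly with the homogeneous $\bar{\phi}_g$.
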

\begin{remark}
If $\phi:G\to\mathbb{R}$ is a quasimorphism and $\bar{\phi}:G\to\mathbb{R}$ is its homogenization,
they satisfy
\[
D(\bar{\phi})\le 2D(\phi)
\]
(see \cite{calegari2009scl} Corollary 2.59).
We will claim in Lemma \ref{lem:upper bound}, when $\phi$ is antisymmetric and a class function,
they satisfy the sharper inequality
\[
D(\bar{\phi})\le D(\phi).
\]
\end{remark}

Note that when $g=2$, the hyperelliptic mapping class group $\mathcal{H}_2$ coincides with $\mathcal{M}_2$.
We may think of the lift of $\sigma_i\in\mathcal{M}_0^6$ for $i=1,2,3,4, 5$ to $\mathcal{M}_2$ as the dehn twist $t_{c_i}$ along the simple closed curve $c_i$ in Figure~\ref{fig1} (see Section \ref{section:symmetric mcg}).
Similarly, the Dehn twist $t_{s_1}\in \mathcal{M}_2$ can be considered as a lift of $(\sigma_1\sigma_2)^6\in\mathcal{M}_0^6$ by the chain relation (see Lemma \ref{lem:chain}).
Since Theorem~\ref{theorem:meyer function} (ii) implies $\bar{\phi}_{6,2}((\sigma_1\sigma_2)^6)=-8/5$
and Theorem~\ref{thm:defect} (i) implies $D(\bar{\phi}_{6,2})\le 4$,
by applying Bavard's duality theorem, we have:
\begin{cor}
\[
\frac{1}{5}\le \scl_{\mathcal{M}_2}(t_{s_1}).
\]
\end{cor}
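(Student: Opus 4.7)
The plan is a direct application of Bavard's duality theorem combined with the two theorems stated above. Bavard's duality says that for any element $x$ in the commutator subgroup of a group $G$,
\[
\scl_{G}(x) = \sup_{\phi} \frac{|\phi(x)|}{2D(\phi)},
\]
where the supremum ranges over homogeneous quasimorphisms $\phi$ on $G$ with positive defect. So it suffices to exhibit one homogeneous quasimorphism on $\mathcal{M}_{2}$ for which the ratio $|\phi(t_{s_{1}})|/2D(\phi)$ is at least $1/5$.

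First I would observe that $t_{s_{1}}$ lies in $[\mathcal{M}_{2},\mathcal{M}_{2}]$ (the separating Dehn twist belongs to the Torelli group, hence to the commutator subgroup), so $\scl_{\mathcal{M}_{2}}(t_{s_{1}})$ is indeed defined. Next, since $\mathcal{H}_{2}=\mathcal{M}_{2}$, the surjection $\mathcal{P}\colon\mathcal{H}_{2}\to\mathcal{M}_{0}^{6}$ with finite kernel lets us pull back any homogeneous quasimorphism on $\mathcal{M}_{0}^{6}$ to a homogeneous quasimorphism on $\mathcal{M}_{2}$ with exactly the same defect. Let $\psi:=\bar{\phi}_{6,2}\circ\mathcal{P}$; by Theorem~\ref{thm:defect}(i) with $m=6$, $j=2$, we have $D(\psi)=D(\bar{\phi}_{6,2})\le 4$.

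Second, I would evaluate $\psi$ on $t_{s_{1}}$. Using the chain relation (Lemma~\ref{lem:chain}) together with the identification $\mathcal{P}(t_{c_{i}})=\sigma_{i}$ discussed right before the corollary, one has $t_{s_{1}}=(t_{c_{1}}t_{c_{2}})^{6}$ in $\mathcal{M}_{2}$, hence $\mathcal{P}(t_{s_{1}})=(\sigma_{1}\sigma_{2})^{6}$. Therefore
\[
\psi(t_{s_{1}}) = \bar{\phi}_{6,2}\bigl((\sigma_{1}\sigma_{2})^{6}\bigr) = -\frac{8}{5},
\]
where the last equality is Theorem~\ref{theorem:meyer function}(ii) with $m=6$, $j=2$, $r=3$ evaluated on the sixth power (using homogeneity of $\bar{\phi}_{6,2}$ to pass from $\sigma_{1}\sigma_{2}$ to its sixth power; alternatively one reads this off from the value already recorded in the paragraph preceding the corollary).

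Finally, plugging into Bavard's duality yields
\[
\scl_{\mathcal{M}_{2}}(t_{s_{1}}) \;\ge\; \frac{|\psi(t_{s_{1}})|}{2D(\psi)} \;\ge\; \frac{8/5}{2\cdot 4} \;=\; \frac{1}{5},
\]
which is the desired bound. There is no real obstacle here; the entire content is packaged into Theorems~\ref{theorem:meyer function} and \ref{thm:defect} and into the chain relation, and the only task is bookkeeping: confirming that pullback preserves the defect, that the chain relation is a valid lift, and that Bavard's theorem applies to $t_{s_{1}}$.
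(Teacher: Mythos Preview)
Your proof is correct and follows essentially the same route as the paper's: compute $\bar{\phi}_{6,2}((\sigma_1\sigma_2)^6)=-8/5$ from Theorem~\ref{theorem:meyer function}(ii), bound $D(\bar{\phi}_{6,2})\le 4$ from Theorem~\ref{thm:defect}(i), and apply Bavard's duality.

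One small inaccuracy worth flagging: your justification that $t_{s_1}\in[\mathcal{M}_2,\mathcal{M}_2]$ is wrong. In genus~$2$ the Torelli group is \emph{not} contained in the commutator subgroup; indeed $H_1(\mathcal{M}_2)\cong\mathbb{Z}/10$, and the chain relation $t_{s_1}=(t_{c_2}t_{c_1})^6$ shows that $t_{s_1}$ maps to $12\equiv 2\neq 0$ there. This does not damage the argument, since $t_{s_1}^5\in[\mathcal{M}_2,\mathcal{M}_2]$, so $\scl$ is defined via Definition~\ref{def:scl}, and the Bavard inequality $\scl_G(x)\ge|\phi(x)|/2D(\phi)$ extends to such $x$ by homogeneity of $\phi$ and of $\scl$. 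But the sentence ``hence to the commutator subgroup'' should be replaced by this observation.
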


By Theorem~\ref{thm:defect} (ii),
we can also determine the stable commutator length of the following element in ${\mathcal H}_{g}$.
\begin{cor}\label{cor}
Let $d_{2}^{+},d_{2}^{-},\ldots, d_{g-1}^{+},d_{g-1}^{-}$ be simple closed curves in Figure~\ref{figure:cd}.
Let $c$ be a nonseparating simple closed curve satisfying $\iota(c)=c$ which is disjoint from 
$d_{i}^{+},d_{i}^{-}$ and $s_{h}$ $(i=1,\ldots, g$, $h=1,\ldots,g-1)$ .
For $g\geq 2$,
\begin{eqnarray*}
\scl_{{\mathcal H}_{g}}(t_{c}^{2g+8}(t_{d_{2}^{+}}t_{d_{2}^{-}} \cdots 
t_{d_{g-1}^{+}}t_{d_{g-1}^{-}})^{2}(t_{s_{1}}\cdots t_{s_{g-1}})^{-1})=\frac{1}{2}.
\end{eqnarray*}
In particular, if $g=2$, then we have $\scl_{{\mathcal H}_{2}}(t_{c}^{12}t_{s_{1}}^{-1})=1/2$.
\end{cor}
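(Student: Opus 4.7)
The plan is to establish $\scl_{\mathcal{H}_g}(x) = 1/2$ for $x = t_{c}^{2g+8}(t_{d_{2}^{+}}t_{d_{2}^{-}} \cdots t_{d_{g-1}^{+}}t_{d_{g-1}^{-}})^{2}(t_{s_{1}}\cdots t_{s_{g-1}})^{-1}$ by matching an upper and a lower bound of $1/2$. Since $\mathcal{P}\colon\mathcal{H}_g\to\mathcal{M}_0^{2g+2}$ has finite kernel, it preserves stable commutator length, so I may compute $\scl$ for $\bar{x}:=\mathcal{P}(x)$ in $\mathcal{M}_0^{2g+2}$ instead.

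For the lower bound I apply Bavard's duality to $\bar{\phi}:=\bar{\phi}_{2g+2,g+1}$, whose defect equals $D(\bar{\phi})=m-2=2g$ by Theorem~\ref{thm:defect}(ii). The Dehn twists $t_c$, $t_{d_i^{\pm}}$ and $t_{s_h}$ commute pairwise by the disjointness hypothesis of the corollary, so $\bar{\phi}$ (being homogeneous) is additive on the abelian subgroup they generate. Using the Birman--Hilden correspondence and the chain relation $(t_{c_1}\cdots t_{c_{2h}})^{4h+2}=t_{s_h}$, each $\mathcal{P}(t_c)$ is conjugate to $\sigma_1$, each $\mathcal{P}(t_{s_h})$ equals $(\sigma_1\cdots\sigma_{2h})^{4h+2}$, and each $\mathcal{P}(t_{d_i^{\pm}})$ is identified from Figure~\ref{figure:cd}. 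Feeding these into Theorem~\ref{theorem:meyer function}(ii) and summing with the coefficients $2g+8$, $2$, $-1$ coming from $x$, the fractional-part expression $\bigl(rj/m-[rj/m]-\tfrac12\bigr)^2$ is either $0$ or $\tfrac14$ in every term, forcing the total to collapse to $\bar{\phi}(\bar{x}) = \pm 2g$. As a sanity check, for $g=2$ no $d$-factors occur and one gets $\bar{\phi}_{6,3}(\bar{x}) = 12\cdot(-\tfrac{2}{5}) + \tfrac{4}{5} = -\tfrac{24}{5}+\tfrac{4}{5} = -4 = -2g$. Bavard's duality then yields $\scl(x)=\scl(\bar{x})\geq 2g/(2\cdot 2g)=1/2$.

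For the upper bound I exhibit $x$ as a single commutator in $\mathcal{H}_g$; since $\scl([a,b])\leq 1/2$ for any commutator in any group, this gives the matching inequality $\scl(x)\leq 1/2$. Three ingredients go into constructing such an $(a,b)$: (i) the chain relation rewrites $(t_{s_1}\cdots t_{s_{g-1}})^{-1}$ as a product of powers of nonseparating Dehn twists $t_{c_i}$; (ii) transitivity of $\mathcal{H}_g$ on $\iota$-invariant nonseparating simple closed curves implies every $t_{c_i}$ is conjugate to $t_c$, so any difference $t_{c_i}^{k}t_c^{-k}$ is a single commutator; and (iii) the hyperelliptic relation $(t_{c_1}\cdots t_{c_{2g+1}})^{2g+2}=1$ constrains the total $t_c$-exponent. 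The arithmetic in $x$ (the exponent $2g+8=2(g+4)$, the outer squaring of the $d$-factor, and the coefficient $-1$ on the chain product) is engineered so that once these rewrites are performed, the surviving element is a single commutator in $\mathcal{H}_g$.

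The main obstacle is the upper bound. The lower-bound computation is essentially bookkeeping dictated by Theorem~\ref{theorem:meyer function}(ii), with the coefficients in $x$ manifestly chosen so that the fractional parts cancel and only the integer $\pm 2g$ survives. Producing an explicit $(a,b)\in\mathcal{H}_g^{2}$ with $[a,b]=x$ is more delicate: it essentially amounts to finding a positive factorization (equivalently a hyperelliptic Lefschetz fibration structure) whose monodromy differs from $x$ by a controlled amount. The case $g=2$ reduces to $\scl_{\mathcal{H}_2}(t_c^{12}t_{s_1}^{-1})=1/2$ and can be done by hand using Matsumoto's relation; the general case is expected to follow by an inductive construction that glues the $g=2$ relation along the chain of separating curves $s_1,\ldots,s_{g-1}$.
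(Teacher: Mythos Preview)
Your lower bound via Bavard's duality applied to $\bar{\phi}_{2g+2,g+1}=\bar{\phi}_g$ is correct and matches the paper; the computation $\bar{\phi}_g(x)=-2g$ agrees with the direct calculation from Endo's values, and your $g=2$ check is right.

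The upper bound, however, has a genuine gap. You assert that $x$ is a single commutator in $\mathcal{H}_g$, but you never exhibit one: for $g=2$ you only invoke ``Matsumoto's relation'' without carrying out the argument, and for general $g$ you say it ``is expected to follow by an inductive construction.'' This is not a proof, and there is no a priori reason why $x$ should be a commutator; $\scl(x)=1/2$ does not imply $\cl(x)=1$.

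The paper bypasses this entirely. Its upper bound also comes from Bavard's duality, not from a commutator expression. The point is that for \emph{every} homogeneous quasimorphism $\phi$ on $\mathcal{H}_g$ one has the identity
\[
\phi(x)=\phi(a)+\phi(b)-\phi(ab),\qquad a=c_2^2c_4^2\cdots c_{2g}^2,\quad b=d_1^+d_1^-\cdots d_g^+d_g^-.
\]
This is obtained by using additivity of $\phi$ on commuting elements to write $\phi(a)+\phi(b)-\phi(ab)=\sum_i\bigl(\phi(c_{2i}^2)+\phi(d_i^+d_i^-)-\phi(c_{2i}^2d_i^+d_i^-)\bigr)$, and then applying the chain relations $(d_i^+c_{2i}d_i^-)^4=s_{i-1}s_i$ (with the obvious modifications at $i=1,g$) to express each $\phi(c_{2i}^2d_i^+d_i^-)$ in terms of $\phi(s_h)$ and $\phi(d_i^+d_i^-)$; after using $d_1^\pm,d_g^\pm$ nonseparating, the result is exactly $(2g+8)\phi(c)+2\sum_{i=2}^{g-1}\phi(d_i^+d_i^-)-\sum_j\phi(s_j)=\phi(x)$. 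Since $|\phi(a)+\phi(b)-\phi(ab)|\le D(\phi)$ is the very definition of the defect, this gives $|\phi(x)|\le D(\phi)$ for all $\phi$, hence $\scl(x)=\sup_\phi|\phi(x)|/(2D(\phi))\le 1/2$. Equality at $\bar{\phi}_g$ (Lemma~\ref{lem:lower bound}) then pins $\scl(x)=1/2$. No commutator representation of $x$ is needed or claimed.
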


Next, we consider upper bounds on stable commutator length.
Korkmaz also gave the upper bound $\scl_{{\mathcal M}_{g}}(t_{s_{0}})\leq 3/20$ for $g\geq 2$
(see \cite{korkmaz2004scl}).
In the case of $g=2$,
the second author showed $\scl_{{\mathcal M}_{2}}(t_{s_{0}})<\scl_{{\mathcal M}_{2}}(t_{s_{1}})$ (see \cite{monden2012ubs}).
However, these upper bounds do not depend on $g$.
On the other hand,
Kotschick proved that there is an estimate $\scl_{{\mathcal M}_{g}}(t_{s_{0}}) = O(1/g)$ 
by using the so-called ``Munchhausen trick" (see \cite{kotschick2008sls} ).

In Section~\ref{upper bound} we give the following upper bounds.
\begin{theorem}\label{thm}
Let $s_{0}$ be a nonseparating curve on $\Sigma_{g}$, and let $G_{g}$ be either ${\mathcal M}_{g}$ or ${\mathcal H}_{g}$.
For all $g\geq 1$, we have
\begin{eqnarray*}
\scl_{G_{g}}(t_{s_{0}}) \leq \frac{1}{2\{2g+3+(1/g)\}}.
\end{eqnarray*}
\end{theorem}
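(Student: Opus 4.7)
The plan is to prove the bound by exhibiting, for each $g \ge 1$, an explicit factorization in $\mathcal{H}_g$
\[
t_{s_0}^{\,2(g+1)(2g+1)} = \prod_{i=1}^{g}[a_i, b_i]
\]
for some $a_i, b_i \in \mathcal{H}_g$. Since $\mathcal{H}_g$ sits inside $\mathcal{M}_g$, this factorization remains valid in $\mathcal{M}_g$, so stable commutator length is bounded in both groups by $g/[2(g+1)(2g+1)] = 1/[2(2g+3+1/g)]$, as required.

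The initial input is the chain relation: for a chain of $2g+1$ hyperelliptically symmetric nonseparating simple closed curves $c_1, \ldots, c_{2g+1}$ on $\Sigma_g$ whose union fills the surface, one has $(t_{c_1}t_{c_2}\cdots t_{c_{2g+1}})^{2g+2}=1$ in $\mathcal{M}_g$ (in $\mathcal{H}_g$ this holds up to the central involution $\iota$, which can be absorbed into a single commutator). This contributes exactly $N := (2g+1)(2g+2)=2(g+1)(2g+1)$ twist factors, each conjugate in $\mathcal{H}_g$ to $t_{s_0}$. Writing each $t_{c_j}=\psi_j t_{s_0}\psi_j^{-1}$ and using $\psi t_{s_0}\psi^{-1} = [\psi,t_{s_0}]\cdot t_{s_0}$, then iteratively sliding all $t_{s_0}$'s to one side, yields an equation $t_{s_0}^{N} = \prod_{\ell=1}^{N} K_\ell$ with each $K_\ell$ a commutator. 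This already gives the trivial bound $\scl(t_{s_0})\le 1$.

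The core of the argument is to compress the $N$ commutators down to exactly $g$. For this I plan to use a Munchhausen-style trick in the spirit of Kotschick \cite{kotschick2008sls}: decompose the chain into $g$ blocks that are cyclically permuted by a suitable symmetry $\tau\in\mathcal{H}_g$ (for instance a $2\pi/g$-style rotation coming from a decomposition of $\Sigma_g$ into $g$ symmetric pieces, each containing a bounded number of the $c_i$'s), and use standard commutator identities to collapse each block's contribution into a single commutator involving $\tau$, for a total of $g$ commutators. The main obstacle will be the bookkeeping of this collapse: choosing the block decomposition and the symmetry $\tau$ so that the collapse is exact, and verifying that the resulting power of $t_{s_0}$ is precisely $N$. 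The case $g=1$ (bound $1/12$ in $\mathcal{M}_1 = \mathcal{H}_1 = \mathrm{SL}(2,\mathbb{Z})$) is handled separately via the standard presentation $(ST)^3=S^2$, from which $T^{12}$ can be expressed as a single commutator.
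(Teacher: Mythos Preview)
Your approach is fundamentally different from the paper's, and the crucial step is missing. The paper does not construct any commutator factorization; it works entirely on the Bavard-dual side. For an arbitrary homogeneous quasimorphism $\phi$ on $\mathcal{H}_g$, the paper combines the relation $\iota^2=1$ with the chain relation $(t_{c_{2g}}\cdots t_{c_2})^{2g}=t_{c_{2g+2}}^2$ to obtain $\phi(t_{c_{2g+1}}^2 t_{c_{2g}}\cdots t_{c_2} t_{c_1}^2)=-\phi(t_{c_1})/g$, and then rearranges the word on the left (via repeated use of $\phi(xyx^{-1})=\phi(y)$ and $[t_{c_i},t_{c_j}]=1$ for $|i-j|\ge2$) into a product $A\cdot B$ where $A$ is a word in the odd-indexed twists and $B$ in the even-indexed ones. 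Since the factors of $A$ pairwise commute, and likewise for $B$, a single application of the defect inequality to the pair $(A,B)$ gives $D(\phi)\ge(2g+3+1/g)\,|\phi(t_{c_1})|$, and Bavard's duality finishes. No commutator is ever written down.

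The gap in your plan is the passage from $N=2(g+1)(2g+1)$ commutators to exactly $g$. You describe this as choosing ``a block decomposition and a symmetry $\tau$ so that the collapse is exact,'' but you produce no such $\tau$. There is no evident element of $\mathcal{H}_g$ of order $g$ that cyclically permutes blocks of the chain $c_1,\dots,c_{2g+1}$ (note $2g+1$ is not divisible by $g$ for $g>1$), and Kotschick's M\"unchhausen argument, which proceeds by embedding disjoint conjugate subsurfaces, yields only $\scl(t_{s_0})=O(1/g)$ with a weaker constant; it does not produce a product of precisely $g$ commutators equal to $t_{s_0}^{2(g+1)(2g+1)}$ inside $\mathcal{H}_g$. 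Unless you can actually exhibit that factorization, or replace it by some other explicit construction that matches the exact constant, your outline does not prove the theorem. The dual argument above avoids this difficulty entirely: one relation plus one defect inequality already pins down the constant.
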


\section{Preliminaries}

\subsection{Stable commutator lengths and quasimorphisms}

Let $G$ denote a group, and let $[G,G]$ denote the commutator subgroup, which is the subgroup of $G$ generated 
by all commutators $[x,y]=xyx^{-1}y^{-1}$ for $x,y\in G$.
\begin{Def}\label{def:scl}
For $x\in [G,G]$, the commutator length $\cl_{G}(x)$ of $x$ is the least number of commutators in $G$ whose product is equal to $x$.
The stable commutator length of $x$, denoted $\scl(x)$, is the limit
\begin{eqnarray*}
\scl_{G}(x)=\lim_{n\rightarrow \infty}\frac{\cl_{G}(x^{n})}{n}.
\end{eqnarray*}

For each fixed $x$, the function $n\rightarrow \cl_{G}(x^{n})$ is non-negative and $\cl_{G}(x^{m+n})\leq \cl_{G}(x^{m})+\cl_{G}(x^{n})$. 
Hence, this limit exists.
If $x$ is not in $[G,G]$ but has a power $x^{m}$ which is in, define $\scl_{G}(x) = \scl_{G}(x^{m})/m$.
We also define $\scl_{G}(x)=\infty$ if no power of $x$ is contained in $[G,G]$
(we refer the reader to \cite{calegari2009scl} for the details of the theory of the stable commutator length).
\end{Def}
\begin{Def}\label{def:homo}
A quasimorphism is a function
$\phi:G\rightarrow \mathbb{R}$
for which there is a least constant $D(\phi)\geq 0$ such that
\begin{eqnarray*}
|\phi(xy)-\phi(x)-\phi(y)|\leq D(\phi)
\end{eqnarray*}
for all $x,y\in G$.
We call $D(\phi)$ the defect of $\phi$.
A quasimorphism is homogeneous if it satisfies the additional property
$\phi(x^{n})=n\phi({x})$
for all $x\in G$ and $n\in \mathbb{Z}$.
\end{Def}
We recall the following basic facts.
Let $\phi$ be a quasimorphism on $G$. For each $x \in G$, define
\begin{eqnarray*}
\bar{\phi}(a) : = \lim_{n\rightarrow \infty}\frac{\phi(x^{n})}{n}.
\end{eqnarray*}
The limit exists, and defines a homogeneous quasimorphism.
Homogeneous quasimorphisms have the following properties.
For example, see \cite[Section 5.5.2]{calegari2009scl} and \cite[Lemma 2.1 (1)]{kotschick2008sls}.
\begin{lemma}\label{quasi1}
Let $\phi$ be a homogeneous quasimorphism on $G$.
For all $x,y\in G$,

{\rm (a)} $\phi(x)=\phi(yxy^{-1})$,

{\rm (b)} $xy=yx\Rightarrow \phi(xy)=\phi(x)+\phi(y)$.
\end{lemma}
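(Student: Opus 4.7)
The plan is to exploit homogeneity to upgrade approximate identities (which hold for any quasimorphism up to the defect) into exact identities, by applying them to large powers and dividing by $n$. Two preliminary facts will be used repeatedly: first, that $\phi(e)=0$, which follows from $\phi(e)=\phi(e^n)/n=\phi(e)$ applied with, say, $n=2$ giving $\phi(e)=2\phi(e)$; and second, that $\phi(y^{-1})=-\phi(y)$, which is just homogeneity with exponent $-1$.

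For part (a), I would start from the identity $(yxy^{-1})^n = y x^n y^{-1}$, which is the reason conjugation interacts so cleanly with powers. Applying the quasimorphism inequality twice,
\[
\bigl|\phi(yx^ny^{-1}) - \phi(y) - \phi(x^n) - \phi(y^{-1})\bigr| \le 2 D(\phi),
\]
and using $\phi(y^{-1})=-\phi(y)$ this simplifies to $|\phi(yx^ny^{-1}) - \phi(x^n)| \le 2D(\phi)$. Dividing by $n$, invoking homogeneity on both sides (so that the left side equals $\phi(yxy^{-1})$ and the right side equals $\phi(x)$), and letting $n\to\infty$ forces the bounded error to vanish, yielding $\phi(yxy^{-1})=\phi(x)$.

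For part (b), the key algebraic input is that commuting elements satisfy $(xy)^n = x^n y^n$. The defining inequality for $\phi$ then gives
\[
\bigl|\phi((xy)^n) - \phi(x^n) - \phi(y^n)\bigr| \le D(\phi).
\]
Dividing by $n$ and applying homogeneity term by term turns this into $|\phi(xy) - \phi(x) - \phi(y)| \le D(\phi)/n$, and sending $n\to\infty$ gives $\phi(xy)=\phi(x)+\phi(y)$.

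There is no real obstacle here: the whole argument is the standard trick of pushing a quasimorphism relation to its $n$th power, dividing by $n$, and using homogeneity to evaluate the limit. The only place to be careful is keeping track of the constant in part (a), where two applications of the quasimorphism inequality (one to split off $y$, one to split off $y^{-1}$) and the identity $\phi(y^{-1})=-\phi(y)$ must be combined cleanly so that the $2D(\phi)$ error really is independent of $n$ before dividing.
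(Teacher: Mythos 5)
Your argument is correct: both parts follow the standard trick of applying the quasimorphism inequality to $(yxy^{-1})^n=yx^ny^{-1}$ and $(xy)^n=x^ny^n$, dividing by $n$, and using homogeneity, and the error bookkeeping (the $n$-independent $2D(\phi)$ in part (a)) is handled properly. The paper itself gives no proof of this lemma, citing it as a basic fact from Calegari's \emph{scl} and Kotschick's paper, and your argument is exactly the standard one found there.
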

\begin{theorem}[Bavard's Duality Theorem {\rm [Ba]}]\label{Ba}
Let $Q$ be the set of homogeneous quasimorphisms on $G$ with positive defects.
For any $x\in [G,G]$, we have
\begin{eqnarray*}
\scl_{G}(x)=\displaystyle\sup_{\phi\in Q}\frac{|\phi(x)|}{2D(\phi)}.
\end{eqnarray*}
\end{theorem}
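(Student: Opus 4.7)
The theorem is the standard Bavard duality, and I would prove it by establishing the two inequalities separately. The bound $\scl_G(x) \geq \sup_{\phi\in Q}|\phi(x)|/(2D(\phi))$ is elementary and follows purely from the defect axiom and homogeneity, while the reverse inequality is the substantive content and requires a Hahn--Banach style duality argument.

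For the elementary direction, I would first reduce the defect bound on a single commutator. Conjugation invariance from Lemma \ref{quasi1}(a) gives $\phi(aba^{-1}) = \phi(b)$, so applying the defining inequality to the product $[a,b] = (aba^{-1})\cdot b^{-1}$ yields $|\phi([a,b])| \leq D(\phi)$. If $x$ is written as a product of $n$ commutators, iterating the defect bound $n-1$ times to split the product, together with the commutator estimate, produces $|\phi(x)| \leq (2n-1)D(\phi) \leq 2\cl_G(x) D(\phi)$. Applying this to $x^k$, using $\phi(x^k) = k\phi(x)$, dividing by $k$, and letting $k\to\infty$ yields $|\phi(x)| \leq 2\scl_G(x)D(\phi)$. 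Taking the supremum over $\phi\in Q$ gives the easy inequality.

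For the reverse inequality, I would follow the functional-analytic framework set up in \cite[Chapter~2]{calegari2009scl}. One works with the real vector space of group $1$-chains modulo the subspace of chains on which every homogeneous quasimorphism vanishes, endowed with the seminorm naturally induced by $\scl$; the topological dual is then canonically identified with the space of homogeneous quasimorphisms modulo homomorphisms, equipped with the defect norm. By Hahn--Banach one can then produce, for each $x\in[G,G]$ with $\scl_G(x)>0$, a homogeneous quasimorphism $\phi\in Q$ realizing $|\phi(x)|/(2D(\phi)) = \scl_G(x)$, so the supremum is in fact attained.

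The main obstacle is this second direction: one must verify carefully that $\scl$ descends to a well-defined seminorm on the appropriate quotient of $1$-chains, that the defect $D(\phi)$ really computes the dual norm, and that a separating linear functional of the required type arises from a genuine quasimorphism (which at its heart is a statement about the kernel of the comparison map from bounded to ordinary cohomology in degree two). Since these details are fully carried out in Bavard's original paper and in \cite{calegari2009scl}, in practice I would simply cite those sources rather than reproduce the argument.
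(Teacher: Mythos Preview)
Your outline of the standard proof is correct, but there is nothing to compare it against: the paper does not give a proof of Theorem~\ref{Ba} at all. Bavard's duality is simply quoted as a known result with a citation to \cite{bavard1991lsc} (and implicitly to \cite{calegari2009scl}), exactly as you yourself conclude you would do in your final paragraph.
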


\subsection{Mapping class groups} \

For $g\geq 1$,
the abelianizations of the mapping class group ${\mathcal M}_{g}$ of the surface $\Sigma_g$ and its subgroup ${\mathcal H}_{g}$ are finite
(see \cite{powell1978ttm}).
Therefore,
all elements of ${\mathcal M}_{g}$ and ${\mathcal H}_{g}$ have powers that are products of commutators.
Dehn showed that the mapping class group $\mathcal{M}_g$ is generated by Dehn twists along non-separating simple closed curves. 
We review some relations between them.
Hereafter, we do not distinguish a simple closed curve in $\Sigma_g$ and its isotopy class.
\begin{lemma}\label{conj}
Let $c$ be a simple closed curve in $\Sigma_{g}$, and $f\in {\mathcal M}_{g}$.
Then, we have
\begin{eqnarray*}
t_{f(c)}=ft_{c}f^{-1}.
\end{eqnarray*}
\end{lemma}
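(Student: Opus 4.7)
The plan is to prove this standard functoriality of the Dehn twist by verifying the identity at the level of diffeomorphism representatives, using the explicit local definition of a Dehn twist in an annular neighborhood of its core curve. First, I would fix an orientation-preserving diffeomorphism $f \colon \Sigma_g \to \Sigma_g$ representing the mapping class, and an annular neighborhood $A$ of $c$ together with an orientation-preserving parametrization $\phi \colon A \to S^1 \times [0,1]$ sending $c$ to $S^1 \times \{1/2\}$. By definition, a representative of $t_c$ may be taken to equal $\phi^{-1} \circ T \circ \phi$ on $A$ and the identity on $\Sigma_g \setminus A$, where $T \colon S^1 \times [0,1] \to S^1 \times [0,1]$ is the standard model twist $T(\theta,s) = (\theta + 2\pi s, s)$.

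Next, I would observe that $f(A)$ is an annular neighborhood of $f(c)$, and that $\phi \circ f^{-1} \colon f(A) \to S^1 \times [0,1]$ is an orientation-preserving parametrization sending $f(c)$ to $S^1 \times \{1/2\}$. A direct computation shows that the conjugate $f \circ t_c \circ f^{-1}$ agrees with the identity on $\Sigma_g \setminus f(A)$ (because $t_c$ is the identity outside $A$) and on $f(A)$ equals
\[
(\phi \circ f^{-1})^{-1} \circ T \circ (\phi \circ f^{-1}).
\]
By the very definition of a Dehn twist with respect to the parametrization $\phi \circ f^{-1}$, this is a representative of $t_{f(c)}$. Passing to isotopy classes gives $t_{f(c)} = f t_c f^{-1}$ in $\mathcal{M}_g$.

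The argument is essentially bookkeeping. The only subtlety worth flagging is that a Dehn twist is defined only up to a choice of orientation on the annular neighborhood, but since $f \in \mathcal{M}_g$ is orientation-preserving, the parametrization $\phi \circ f^{-1}$ carries the same orientation as $\phi$, so the conjugated twist has the correct handedness. Hence there is no genuine obstacle: the lemma reduces to the naturality of the local model for a Dehn twist under orientation-preserving diffeomorphisms, and one also obtains independence from the chosen representative $f$ and the parametrization $\phi$ in the standard way.
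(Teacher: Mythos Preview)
Your argument is correct and is the standard proof of this well-known naturality property of Dehn twists. The paper itself gives no proof of this lemma at all---it is simply stated as a basic fact about mapping class groups (as in, e.g., Farb--Margalit's \emph{Primer})---so there is nothing to compare against; your direct verification via the local annulus model is exactly how one would justify it if a proof were demanded.
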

From this lemma,
the values of $\scl$ and homogeneous quasimorphisms on the Dehn twists about nonseparating simple closed curves are constant.

\begin{lemma}\label{braid}
Let $c$ and $d$ be simple closed curves in $\Sigma_{g}$.
\begin{enumerate}
\def\theenumi{\alph{enumi}}
\item If $c$ is disjoint from $d$, then $t_{c}t_{d}=t_{d}t_{c}$.
\item If $c$ intersects $d$ in one point transversely, then $t_{c}t_{d}t_{c}=t_{d}t_{c}t_{d}$.
\end{enumerate}
\end{lemma}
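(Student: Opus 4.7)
The plan is to handle the two parts separately, both being classical facts about Dehn twists.

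For part (a), I would invoke the local/compactly supported nature of a Dehn twist. Choose pairwise disjoint annular neighborhoods $A_c$ and $A_d$ of $c$ and $d$ respectively; this is possible precisely because $c\cap d=\emptyset$. One may realize $t_c$ and $t_d$ by diffeomorphisms supported inside $A_c$ and $A_d$ respectively. Two diffeomorphisms of $\Sigma_g$ with disjoint supports commute pointwise, and hence commute in $\mathcal{M}_g$. This gives $t_ct_d=t_dt_c$.

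For part (b), the cleanest route is to combine Lemma \ref{conj} with the geometric identity
\[
(t_ct_d)(c)=d
\]
as isotopy classes of simple closed curves on $\Sigma_g$. Granting this identity, Lemma \ref{conj} gives
\[
t_d = t_{(t_ct_d)(c)} = (t_ct_d)\,t_c\,(t_ct_d)^{-1},
\]
and multiplying on the right by $t_ct_d$ yields the braid relation $t_dt_ct_d=t_ct_dt_c$.

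The main obstacle is therefore the geometric claim that $(t_ct_d)(c)$ is isotopic to $d$. Because $c$ and $d$ meet once transversely, a regular neighborhood $N$ of $c\cup d$ is a one-holed torus, and both $t_c$ and $t_d$ are supported in $N$. Inside $N$ the verification reduces to a picture: first apply $t_d$ to $c$, which drags $c$ once around $d$ at the unique intersection point, producing a curve $c'$ that meets $c$ once transversely; then $t_c(c')$ is (isotopic to) $d$. I would carry this out by explicitly drawing the image curves, or equivalently by computing the action on $H_1(N)\cong\mathbb{Z}^2$ via the matrices $\bigl(\begin{smallmatrix}1&1\\0&1\end{smallmatrix}\bigr)$ and $\bigl(\begin{smallmatrix}1&0\\-1&1\end{smallmatrix}\bigr)$ associated with $t_c$ and $t_d$ in the basis $([c],[d])$, and then upgrading the algebraic identity to an isotopy identity using the bigon criterion on the one-holed torus. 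Once this is confirmed, the proof of (b) is complete.
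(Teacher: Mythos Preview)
Your argument is correct; both parts are the standard proofs. Part (a) via disjoint supports is immediate, and part (b) via Lemma~\ref{conj} together with the identity $(t_ct_d)(c)\simeq d$ is the usual route. Your justification of that identity on the one-holed torus is fine: essential nonseparating simple closed curves there are classified up to isotopy by their (primitive) homology classes modulo sign, so the $\mathrm{SL}(2,\mathbb{Z})$ computation
\[
\begin{pmatrix}1&1\\0&1\end{pmatrix}\begin{pmatrix}1&0\\-1&1\end{pmatrix}\begin{pmatrix}1\\0\end{pmatrix}=\begin{pmatrix}0\\-1\end{pmatrix}
\]
already forces $(t_ct_d)(c)$ to be isotopic to $d$, without any separate appeal to the bigon criterion.

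As for comparison with the paper: there is nothing to compare. The paper states Lemma~\ref{braid} as a well-known background fact and gives no proof, so your write-up supplies a proof where the paper offers none.
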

\begin{figure}[htbp]
 \begin{center}
\includegraphics*[width=7cm]{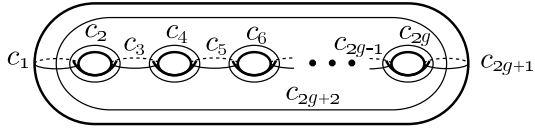}
 \end{center}
 \caption{The curves $c_{1},c_{2},\ldots,c_{2g+2}$.}
 \label{fig1}
\end{figure}
\begin{lemma}[The hyperelliptic involution]
Let $c_{1},c_{2},\ldots, c_{2g+1}$ be nonseparating curves in $\Sigma_{g}$ as in Figure~\ref{fig1}.
We call the product
\begin{eqnarray*}
\iota:= \ t_{c_{2g+1}} t_{c_{2g}}\cdots t_{c_{2}} t_{c_{1}} t_{c_{1}}t_{c_{2}}\cdots t_{c_{2g}}t_{c_{2g+1}}
\end{eqnarray*}
the hyperelliptic involution.
For $g=1$, the hyperelliptic involution $\iota$ equals to $t_{c_{1}}t_{c_{2}}t_{c_{1}}t_{c_{1}}t_{c_{2}}t_{c_{1}}$,
where $c_{1}$ (resp. $c_{2}$) is the meridian (resp. the longitude) of $\Sigma_{1}$.
\end{lemma}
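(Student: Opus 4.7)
The plan is to verify that the Dehn twist product $\iota := t_{c_{2g+1}} \cdots t_{c_1} t_{c_1} \cdots t_{c_{2g+1}}$ coincides in $\mathcal{M}_g$ with the isotopy class of the geometric hyperelliptic involution $\iota_{\mathrm{geo}}$ of Figure~\ref{fig4} (rotation of $\Sigma_g$ by $\pi$ about the horizontal axis meeting each $c_i$ in two points). I would proceed in three steps: place $\iota$ inside $\mathcal{H}_g$, show that its image in $\mathcal{M}_0^{2g+2}$ is trivial, and then rule out the possibility $\iota = 1$ via a homological calculation.

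First, the geometric involution $\iota_{\mathrm{geo}}$ preserves each $c_i$ setwise, so by Lemma~\ref{conj} we have $\iota_{\mathrm{geo}}\, t_{c_i}\, \iota_{\mathrm{geo}}^{-1} = t_{\iota_{\mathrm{geo}}(c_i)} = t_{c_i}$; hence $\iota_{\mathrm{geo}}$ commutes with each $t_{c_i}$, and therefore with $\iota$, which places $\iota$ in $\mathcal{H}_g$. Next, under the Birman--Hilden projection $\mathcal{P}:\mathcal{H}_g \to \mathcal{M}_0^{2g+2}$, each $t_{c_i}$ maps to the half-twist $\sigma_i$ on consecutive marked points, so $\mathcal{P}(\iota) = \sigma_{2g+1} \cdots \sigma_1 \sigma_1 \cdots \sigma_{2g+1}$. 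This palindromic word realizes the operation of dragging one marked point along a loop through all the others and back; since $S^2$ minus $2g+1$ points admits a null-homotopic loop encircling all of them, this braid is trivial in $\mathcal{M}_0^{2g+2}$ (the standard sphere relation). Since $\ker \mathcal{P} = \langle \iota_{\mathrm{geo}} \rangle$, it follows that $\iota \in \{1, \iota_{\mathrm{geo}}\}$.

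For the third step, I would apply the Picard--Lefschetz formula $(t_c)_*(x) = x + \langle c, x \rangle c$ to a symplectic basis of $H_1(\Sigma_g;\mathbb{Z})$ adapted to the chain $c_1, \ldots, c_{2g+1}$; a direct linear-algebra calculation then shows that $\iota_*$ equals $-I$, ruling out $\iota = 1$ and forcing $\iota = \iota_{\mathrm{geo}}$. For the $g=1$ assertion, under the identification $c_3 = c_1$ suggested by Figure~\ref{fig1}, the word collapses to $(t_{c_1} t_{c_2} t_{c_1})^2$, which under $\mathcal{M}_1 \cong SL_2(\mathbb{Z})$ sends the standard matrices for the meridian and longitude twists to an element squaring to $-I$. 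The main obstacle is the triviality of $\mathcal{P}(\iota)$ in $\mathcal{M}_0^{2g+2}$: although the geometric picture on $S^2$ is transparent, turning it into a formal derivation from the standard presentation of $\mathcal{M}_0^m$ requires some bookkeeping with braid relations to match the palindromic word to the standard form $\sigma_1 \cdots \sigma_{m-1} \sigma_{m-1} \cdots \sigma_1 = 1$.
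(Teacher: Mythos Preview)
The paper does not give a proof of this lemma; it is stated as a standard fact, essentially a definition of~$\iota$ together with the implicit (and classical) identification with the geometric involution of Figure~\ref{fig4}. So there is nothing in the paper to compare your argument against.

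Your three-step outline is nonetheless a correct way to prove the identification. A couple of remarks. First, what you flag as the ``main obstacle'' is not one: with $m=2g+2$ the word $\mathcal{P}(\iota)=\sigma_{m-1}\cdots\sigma_1^2\cdots\sigma_{m-1}$ is the image of the standard sphere relator $\sigma_1\cdots\sigma_{m-1}^2\cdots\sigma_1$ under the automorphism $\sigma_i\mapsto\sigma_{m-i}$ of $\mathcal{M}_0^m$, so its triviality is immediate (alternatively, the paper's own proof of Lemma~\ref{lemma:generator} exhibits the analogous word as a Dehn twist about a curve bounding a disc in~$S^2$). Second, in step~3 you do not need the full computation of $\iota_*$: checking $\iota_*(x)=-x$ for a single nonzero class already rules out $\iota=1$. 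Finally, a small slip in the $g=1$ paragraph: under $\mathcal{M}_1\cong SL_2(\mathbb{Z})$ the element $(t_{c_1}t_{c_2}t_{c_1})^2$ \emph{equals} $-I$, not ``squares to $-I$''; this is exactly what you want, since $-I$ is the hyperelliptic involution on the torus.
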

\begin{lemma}[The chain relation] \label{lem:chain}
For a positive integer $n$,
let $a_1,a_2,\ldots, a_n$ be a sequence of simple closed curves in $\Sigma_g$
such that $a_i$ and  $a_j$ are disjoint if $|i-j|\geq 2$,
and that $a_i$ and $a_{i+1}$ intersect at one point.

When $n$ is odd, a regular neighborhood of $a_1\cup a_2\cup \cdots \cup  a_n$ is a subsurface of genus $(n-1)/2$ with
two boundary components, denoted by $d_1$ and $d_2$. We then have
\begin{eqnarray*}
(t_{a_n}\cdots t_{a_2}t_{a_1} )^{n+1}=t_{d_1}t_{d_2}.
\end{eqnarray*}
When $n$ is even, a regular neighborhood of $a_1\cup a_2\cup \cdots \cup  a_n$ is a subsurface of genus $n/2$ with
connected boundary, denoted by $d$. We then have
\begin{eqnarray*}
(t_{a_n}\cdots t_{a_2}t_{a_1} )^{2(n+1)}=t_d.
\end{eqnarray*}
\end{lemma}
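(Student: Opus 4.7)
The plan is to exploit the braid structure provided by Lemma \ref{braid}. Since the twists $t_{a_1},\ldots,t_{a_n}$ satisfy the Artin braid and far-commutation relations, the assignment $\sigma_i\mapsto t_{a_i}$ extends to a homomorphism $\rho\colon B_{n+1}\to\mathcal{M}(N,\partial N)$, where $N$ denotes the regular neighborhood of $a_1\cup\cdots\cup a_n$ and $B_{n+1}$ is Artin's braid group on $n+1$ strands. The classical identity that the full twist $\Delta^2=(\sigma_1\cdots\sigma_n)^{n+1}=(\sigma_n\cdots\sigma_1)^{n+1}$ generates the center of $B_{n+1}$ then implies that $T^{n+1}:=(t_{a_n}\cdots t_{a_1})^{n+1}$ commutes with every $t_{a_i}$.

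Next I would argue that $T^{n+1}$ is supported in a collar of $\partial N$, so that it equals a product of integer powers of boundary Dehn twists. The Alexander method provides the standard route: cutting $N$ along the $a_i$ yields a disjoint union of disks (together with the annular collars of $\partial N$), and a mapping class of $N$ commuting with each $t_{a_i}$ necessarily fixes each isotopy class $a_i$, hence is representable by a diffeomorphism that is the identity on the cut-open pieces away from the collar, and therefore lies in the subgroup generated by boundary Dehn twists.

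It remains to determine the exponents, which I expect to be the main obstacle. A direct route is to evaluate $T^{n+1}$ on a properly embedded arc meeting each $a_i$ transversely in a single point, and track how many times its image wraps around each boundary component. In the odd case this yields one full twist along each of the two boundary curves, proving $T^{n+1}=t_{d_1}t_{d_2}$. In the even case, where $\partial N$ is connected, the cleanest derivation is to reduce to the odd case: attach an additional $1$-handle extending the chain by one curve $a_{n+1}$ that meets $a_n$ once and is disjoint from the others, apply the odd-length identity to the extended chain inside a slightly larger surface, and then manipulate to isolate $T^{2(n+1)}=t_d$. Alternatively one can simply induct on $n$ starting from the base cases $n=1$ (trivial) and $n=2$ (the classical $(t_{a_1}t_{a_2})^6=t_d$ identity on a once-punctured torus), promoting each case to the next via a lantern-style relation on a suitable subsurface. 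The exponent bookkeeping is routine but genuinely topological, and is the only step not handled by pure braid-group algebra.
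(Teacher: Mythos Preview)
The paper does not prove Lemma~\ref{lem:chain}; it is quoted without proof as a classical relation in mapping class groups (see, e.g., Farb--Margalit, \emph{A Primer on Mapping Class Groups}, \S4.4, or Birman's book \cite{birman1974blm}). So there is no ``paper's proof'' to compare your proposal against.

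That said, your outline is essentially the standard argument, with a couple of imprecisions worth flagging. First, the complement $N\setminus(a_1\cup\cdots\cup a_n)$ does not contain any disks: an Euler-characteristic count shows it consists solely of the boundary-parallel annuli (two when $n$ is odd, one when $n$ is even). This is still exactly what the Alexander method needs, so your conclusion that $T^{n+1}$ is a product of boundary twists survives. Second, commuting with $t_{a_i}$ gives (via Lemma~\ref{conj}) only that $T^{n+1}$ preserves the unoriented isotopy class of each $a_i$; to feed into the Alexander method you must also check that orientations and complementary regions are preserved, which is most easily done by noting that $T^{n+1}$ acts trivially on $H_1(N,\partial N)$ or by tracking one auxiliary arc.

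The genuine gap is the exponent computation, which you acknowledge but do not carry out. Your proposed reduction of the even case to the odd case is not straightforward: adjoining $a_{n+1}$ enlarges $N$ to a surface whose two boundary components are \emph{not} the old boundary $d$, so the odd-chain identity for the extended chain does not directly hand you $t_d$. The clean route is the one you mention first: pick a properly embedded arc $\gamma$ in $N$ running from $\partial N$ across the chain, compute $T^{n+1}(\gamma)$ explicitly (this is where the braid picture of $\Delta^2$ as a full twist of the strands pays off), and read off the winding about each boundary component. Until that step is written down, the proof is a correct plan rather than a proof.
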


\subsection{Meyer's signature cocycle}\label{section:meyer cocycle}
Let $X$ be a compact oriented $(4n+2)$-manifolds for nonnegative integer $n$,
and let $\Gamma$ be a local system on $X$
such that $\Gamma(x)$ is a finite dimensional real or complex vector space for $x\in X$.
If we are given a regular antisymmetric (resp. skew-hermitian) form $\Gamma\otimes \Gamma\to \mathbb{R}$ (resp. $\Gamma\otimes \Gamma\to \mathbb{C}$),
we have a symmetric (resp. hermitian) form on $H_{2n+1}(X;\Gamma)$ as in \cite[p.12]{meyer1972slk}.
For simplicity, we only explain the complex case.
It is defined by 
\begin{align*}
H_{2n+1}(X;\Gamma)\otimes H_{2n+1}(X;\Gamma)&\cong H^{2n+1}(X,\partial X;\Gamma)\otimes H^{2n+1}(X,\partial X;\Gamma)\\
&\xrightarrow{\cup} H^{4n+2}(X,\partial X; \Gamma\otimes\Gamma)\\
&\to H^{4n+2}(X,\partial X; \mathbb{C})\\
&\xrightarrow{[X,\partial X]}\ \mathbb{C},
\end{align*}
where the first row is defined by the Poincar\'e duality,
the second row is defined by the cup product of the base space,
the third row comes from the skew-hermitian form of $\Gamma$ as above, 
and the fourth row is the evaluation by the fundamental class of $X$.
Meyer showed additivity of signatures with respect to this hermitian form
(more strongly, he showed Wall's non-additivity formula for $G$-signatures of homology groups with local coefficients).

\begin{theorem}[{\cite[Satz I.3.2]{meyer1972slk}}]\label{theorem:additivity}
Let $X$ and $\Gamma$ be as above.
Assume that $X$ is obtained by gluing
two compact oriented $2n$-manifold $X_{-}$ and $X_{+}$
along some boundary components.

Then, we have 
\[
\Sign(H_n(X;\Gamma))=
\Sign(H_n(X_{-};\Gamma|_{X_-}))+\Sign(H_n(X_{+};\Gamma|_{X_+})).
\]
\end{theorem}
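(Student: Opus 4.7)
The plan is to follow the classical template for Novikov additivity, adapted to the skew-hermitian local coefficient setting. Let $Y$ denote the common boundary along which $X_-$ and $X_+$ are glued, a closed $(2n-1)$-manifold that is a subcollection of components of $\partial X_\pm$, and let $\Gamma|_Y$ be the restriction of the local system (the gluing is assumed compatible with $\Gamma$). I would first write down the Mayer-Vietoris long exact sequence with $\Gamma$-coefficients,
\[
\cdots\to H_n(Y;\Gamma|_Y)\xrightarrow{\alpha} H_n(X_-;\Gamma|_{X_-})\oplus H_n(X_+;\Gamma|_{X_+})\xrightarrow{\beta} H_n(X;\Gamma)\xrightarrow{\partial} H_{n-1}(Y;\Gamma|_Y)\to\cdots,
\]
and verify that the skew-hermitian form of $\Gamma$ makes all maps compatible with the hermitian intersection pairings induced on homology via cup product and the fundamental class.

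The next step is to decompose the hermitian form on $H_n(X;\Gamma)$ using this sequence. Representatives of classes in $\Im(\beta)$ can be chosen supported in $X_-$ and $X_+$ separately, so their intersections are concentrated near $Y$. A bookkeeping of those intersections via twisted Poincar\'e-Lefschetz duality for the pairs $(X_\pm,\partial X_\pm)$ shows that $\beta$ carries the orthogonal direct sum form on $H_n(X_-;\Gamma|_{X_-})\oplus H_n(X_+;\Gamma|_{X_+})$ to the form on $\Im(\beta)$ modulo its radical, and that this radical is exactly $\Im(\alpha)$. On the complementary quotient $H_n(X;\Gamma)/\Im(\beta)\cong\Im(\partial)$, the hermitian form is isotropic: through the connecting map, $\Im(\partial)$ is identified with a Lagrangian subspace of $H_{n-1}(Y;\Gamma|_Y)$ under its nondegenerate pairing, reflecting the fact that these classes bound on both sides.

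Finally, a purely algebraic lemma on hermitian forms (signature is additive under an orthogonal decomposition modulo a common radical, and an isotropic subspace contributes zero to the signature) delivers the stated additivity. The main technical obstacle will be the compatibility of the intersection pairings with local coefficients under the various natural maps: one must verify that twisted Poincar\'e-Lefschetz duality for $(X_\pm,\partial X_\pm)$ intertwines the cap product on $H_n$ with the skew-hermitian pairing coming from $\Gamma$, and that the induced form on $\Im(\beta)$ genuinely agrees with the direct sum of the individual forms. Once this diagram chase is in place, the signature additivity follows immediately; Meyer's stronger Wall non-additivity formula then corresponds to the case when $Y$ is glued only along a portion, producing a Maslov triple-index correction that is absent here because the gluing uses full boundary components.
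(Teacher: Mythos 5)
This theorem is not proved in the paper at all: it is imported verbatim from Meyer \cite[Satz I.3.2]{meyer1972slk}, so there is no in-paper argument to compare yours against. Your sketch is the standard Novikov-additivity template transported to local coefficients, which is indeed the substance of Meyer's result (he in fact proves the stronger Wall non-additivity formula, and you correctly observe that the Maslov correction is absent because the gluing is along entire boundary components). As a strategy it is viable, but two steps are stated loosely enough that they would fail as written. First, the ``radical of the form on $\Im(\beta)$ is exactly $\Im(\alpha)$'' does not typecheck: $\Im(\alpha)=\ker(\beta)$ lives in $H_n(X_-;\Gamma|_{X_-})\oplus H_n(X_+;\Gamma|_{X_+})$, not in $\Im(\beta)\subseteq H_n(X;\Gamma)$. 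What you actually need is that $\ker(\beta)$ is contained in the radical of the \emph{direct-sum} form, which holds because a class in the image of $H_n(Y;\Gamma|_Y)\to H_n(X_\pm;\Gamma|_{X_\pm})$ dies in $H_n(X_\pm,\partial X_\pm;\Gamma|_{X_\pm})$ and the form factors through that relative group; only then does the form descend to $\Im(\beta)$ with the right signature. Second, the hermitian form on $H_n(X;\Gamma)$ does not descend to the quotient $H_n(X;\Gamma)/\Im(\beta)$, so ``the form on the quotient is isotropic'' is not a meaningful intermediate claim. The clean replacement is the inclusion $(\Im\beta)^{\perp}\subseteq\Im\beta$, proved by noting that $x\perp\Im(j_+)$ forces the image of $x$ in $H_n(X_+,\partial X_+;\Gamma|_{X_+})$ to vanish by nondegeneracy of the twisted Lefschetz pairing over $\mathbb{C}$; the algebraic lemma $W^{\perp}\subseteq W\Rightarrow\Sign(V)=\Sign(W)$ then finishes. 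Finally, keep in mind that in the intended application $X$ is a surface bundle over a pair of pants, so $\partial X\neq\emptyset$ and every form in sight is degenerate; all of the above must be carried out modulo radicals, which signature tolerates but your write-up should say explicitly.
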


Consider the case when $X$ is a pair of pants, which we denote by $P$.
Let $\alpha$ and $\beta$ be loops in $P$ as in Figure~\ref{fig: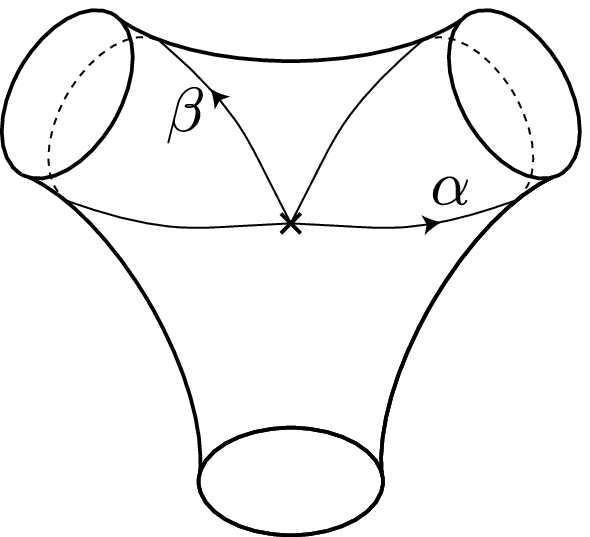}.
\begin{figure}[htbp]
  \begin{center}
    \includegraphics[height=3cm]{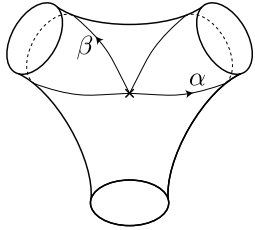}
  \end{center}
  \caption{loops in a pair of pants}
  \label{fig:pants1.eps}
\end{figure}

For $\varphi,\psi\in\mathcal{M}_g$,
there exists a $\Sigma_g$-bundle $E_{\varphi,\psi}$ on $P$ 
whose monodromies along $\alpha$ and $\beta$ are $\varphi$ and $\psi$, respectively.
This is unique up to bundle isomorphism.
In this setting, the intersection form on the local system $H_1(\Sigma_g;\mathbb{R})$ induces
the symmetric form on $H_1(P;H_1(\Sigma_g;\mathbb{R}))$.
Meyer showed that the signature of this symmetric form on $H_1(P;H_1(\Sigma_g;\mathbb{R}))$ coincides with that of $E_{\varphi,\psi}$.
Moreover, he explicitly described it in terms of the action of the mapping class group on $H_1(\Sigma_g;\mathbb{R})$ as follows.
Fix the symplectic basis $\{A_i,B_i\}_{i=1}^g$ of $H_1(\Sigma_g;\mathbb{Z})$ as in Figure~\ref{figure:symplectic},
then the action induces a homomorphism $\rho:\mathcal{M}_g\to\Sp(2g;\mathbb{Z})$.
Let $I$ denote the identity matrix of rank $g$, and $J$ denote a matrix defined by
\[
J=
\begin{pmatrix}
0&I\\
-I&0
\end{pmatrix}.
\]

For  symplectic matrices $A$ and $B$ of rank $2g$,
let $V_{A,B}$ denote the vector space defined by
\[
V_{A,B}=\{(v,w)\in \mathbb{R}^{2g}\times\mathbb{R}^{2g}\,|\,(A^{-1}-I)v+(B-I)w=0\}.
\]
Consider the symmetric bilinear form
\[
\braket{\ ,\ }_{A,B}:V_{A,B}\times V_{A,B}\to \mathbb{R}
\]
on $V_{A,B}$ defined by
\[
\braket{(v_1,w_1),(v_2,w_2)}_{A,B}:=(v_1+w_1)^T{}J(I-B)w_2.
\]
Then, the space $V_{A,B}$ coincides with $H_1(P;H_1(\Sigma_g;\mathbb{R}))$,
and the above form $\braket{\ ,\ }_{\rho(\varphi),\rho(\psi)}$ corresponds to the symmetric form on $H_1(P;H_1(\Sigma_g;\mathbb{R}))$.
\begin{figure}[htbp]
\begin{center}
\includegraphics[width=7cm]{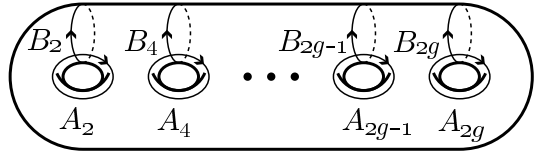}
\end{center}
\caption{A symplectic basis of $H_1(\Sigma_g;\mathbb{Z})$}
\label{figure:symplectic}
\end{figure}

Meyer's signature cocycle $\tau_g:\mathcal{M}_g\times\mathcal{M}_g\to \mathbb{Z}$ is the map defined by $(\varphi,\psi)\mapsto\braket{\ ,\ }_{\rho(\varphi),\rho(\psi)}$,
which is a bounded 2-cycle by Theorem~\ref{theorem:additivity}.
When we restrict it to the hyperelliptic mapping class group $\mathcal{H}_g$,
it represents the trivial cohomology class in $H^2(\mathcal{H}_g;\mathbb{Q})$.
Since the first homology $H_1(\mathcal{H}_g;\mathbb{Q})$ is trivial,
the cobounding function $\phi_g:\mathcal{H}_g\to\mathbb{Q}$ of $\tau_g$ is unique.
It is a quasimorphism, called the Meyer function.
In \cite{endo2000mss}, Endo computed it to investigate signatures of fibered 4-manifolds called hyperelliptic Lefschetz fibrations.
In \cite{morifuji2003mfh}, Morifuji relates it to the eta invariants of mapping tori and the Casson invariants of integral homology 3-spheres.

\section{Cobounding functions of the Meyer's signature cocycles on symmetric mapping class groups}\label{section:construction}

As in the introduction, let $m$ be a positive integer greater than $3$ and $\{q_i\}_{i=1}^m$ be $m$ distinct points in a 2-sphere $S^2$.
Choose a base point $*\in S^2-\{q_i\}_{i=1}^m$,
and denote by $\alpha_i\in\pi_1(S^2-\{q_i\}_{i=1}^m,*)$ a loop
which rounds the point $q_i$ clockwise as in Figure~\ref{fig: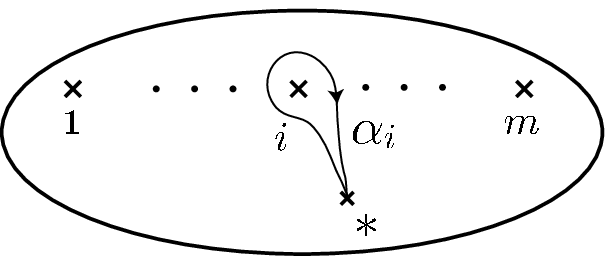}.
\begin{figure}[htbp]
  \begin{center}
    \includegraphics[height=2cm]{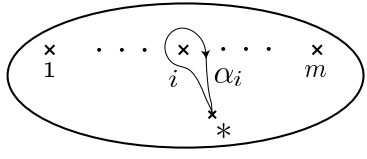}
  \end{center}
  \caption{a loop $\alpha_i$}
  \label{fig:loop1.eps}
\end{figure}

For an integer $d$ such that $d\ge2$ and $d|m$,
define a homomorphism $\pi_1(S^2-\{q_i\}_{i=1}^m)\to \mathbb{Z}/d\mathbb{Z}$
by mapping each generator $\alpha_i$ to $1\in\mathbb{Z}/d\mathbb{Z}$.
This homomorphism induces a $d$-cyclic branched covering $p_d:\Sigma_h\to S^2$ with $m$ branched points, 
where $\Sigma_h$ is a closed oriented surface of genus $h$.
Applying the Riemann-Hurwitz formula, we have $h=(d-1)(m-2)/2$.
We denote by $t:\Sigma_h\to \Sigma_h$ the deck transformation which corresponds to the generator $1\in\mathbb{Z}/d\mathbb{Z}$.

Let $\eta$ denote the $d$-th root of unity $\exp(2\pi\sqrt{-1}/d)$.
The first homology $H_1(\Sigma_h;\mathbb{C})$ decomposes into
a direct sum $\bigoplus_{j=1}^{d-1}V^{\eta^j}$, where $V^z$ is an eigenspace whose eigenvalue is $z\in\mathbb{C}$.
Note that $V^{1}$ is trivial since the quotient space $\Sigma_g/\braket{t}$ is a 2-sphere, where $\braket{t}$ denotes the cyclic group generated by the deck transformation $t$.
We also denote by $C_h(t)$ the centralizer of $t$ in the diffeomorphism group $\Diff_+\Sigma_h$.
We call the path-connected component $\pi_0C_h(t)$ the symmetric mapping class group of the covering $p$,
which is defined by Birman-Hilden (see, for example, \cite{birman1973ihr}).

In this section, we define 2-cocycles on the symmetric mapping class group $\pi_0C_h(t)$ using the $\omega$-signature in \cite{gambaudo2005bs} which derive from the nonadditivity formula.

Let us consider an oriented $\Sigma_h$-bundle $E_{\varphi,\psi}$ over $P$
whose structure group is contained in $C_h(t)$,
and monodromies along $\alpha$ and $\beta$ are $\varphi$ and $\psi$ in $\pi_0C_h(t)$, respectively.
Since coordinate transformations commute with the deck transformation $t$,
we can define a fiberwise $\mathbb{Z}/d\mathbb{Z}$ action on $E_{\varphi,\psi}$.
Since the structure group is in $C_h(t)$,
not only $H_1(\Sigma_h;\mathbb{C})$ but also each eigenspace $V^{\eta^j}$ is a local system on $P$.
We can extend the intersection form as a skew-hermitian form $H_1(\Sigma_h;\mathbb{C})\otimes H_1(\Sigma_h;\mathbb{C})\to \mathbb{C}$ defined by
\[
(x_1+x_2\sqrt{-1})\cdot(y_1+y_2\sqrt{-1})=x_1\cdot y_1+x_2\cdot y_2+(x_1\cdot y_2-x_2\cdot y_1)\sqrt{-1}.
\]

For $v\in V^{\eta^j}$ and $w\in V^{\eta^k}$ ($1\le j\le d-1$, $1\le k\le d-1$), 
\begin{align*}
(tv)\cdot w&=(\omega^jv)\cdot w=\omega^{-j}(v\cdot w),\\
(tv)\cdot w&=v\cdot (t^{-1}w)=v\cdot (\omega^{-k}w)=\omega^{-k}(v\cdot w).
\end{align*}
Since $\omega^{-j}$ is not equal to $\omega^{-k}$, 
we have $v\cdot w=0$.
Hence, $H_1(\Sigma_h;\mathbb{C})$ decomposes into an  orthogonal sum of subspaces $\{V^{\omega^j}\}_{j=1}^{d-1}$.
By restricting the intersection form on $H_1(\Sigma_h;\mathbb{C})$ to $V^{\eta^j}$,
we can define a hermitian form on $H_1(P;V^{\eta^j})$.
By Theorem~\ref{theorem:additivity}, we have a 2-cocycle on $\pi_0C_h(t)$ as follows.

\begin{lemma}
Let $j$ be an integer such that $1\le j\le m-1$.
The map $\tau_{m,d,j}:\pi_0C_h(t)\times \pi_0C_h(t)\to\mathbb{Z}$ defined by
\[
\tau_{m,d,j}(\varphi,\psi)=\Sign(H_1(P; V^{\eta^{j}}))
\]
is a $2$-cocycle,
where $V^{\eta^{j}}$ is the local system on $P$
induced from the oriented $\Sigma_h$-bundle $E_{\varphi,\psi}\to P$.
\end{lemma}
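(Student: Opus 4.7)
The plan is to verify two things: that $\tau_{m,d,j}$ is a well-defined integer-valued function, and that its coboundary vanishes. The well-definedness argument builds on material just established before the lemma; the cocycle property is a standard consequence of Meyer's additivity applied to a four-holed sphere.

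\textbf{Well-definedness.} First I would confirm that the skew-hermitian intersection form on $H_1(\Sigma_h;\mathbb{C})$ restricts to a non-degenerate form on each eigenspace $V^{\eta^j}$ for $1\le j\le d-1$. This is exactly the orthogonality computation given just before the lemma, which shows that $V^{\eta^j}$ and $V^{\eta^k}$ pair trivially whenever $j\ne k$. Since the full form on $H_1(\Sigma_h;\mathbb{C})$ is non-degenerate and the space decomposes as an orthogonal sum of the $V^{\eta^j}$ (the eigenvalue $1$ component being trivial), each summand carries a non-degenerate skew-hermitian form. Because the structure group of $E_{\varphi,\psi}$ lies in $C_h(t)$, all parallel transports commute with $t$ and therefore preserve the eigenspaces, so $V^{\eta^j}$ is a genuine local system on $P$. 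The signature of the induced hermitian form on $H_1(P;V^{\eta^j})$ is then a well-defined integer, independent of the choice of representative bundle $E_{\varphi,\psi}$.

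\textbf{Cocycle condition.} For $\varphi,\psi,\chi\in\pi_0C_h(t)$ I would construct a $\Sigma_h$-bundle $E$ with structure group in $C_h(t)$ over a four-holed sphere $P_4$, whose monodromies around the four boundary circles are $\varphi$, $\psi$, $\chi$, $(\varphi\psi\chi)^{-1}$. Cutting $P_4$ along a simple closed curve that separates the first two boundary circles from the last two exhibits $P_4$ as the union of two pairs of pants carrying the bundles $E_{\varphi,\psi}$ and $E_{\varphi\psi,\chi}$, respectively. Cutting instead along a curve separating the first boundary circle from the remaining three yields a second decomposition into pants carrying $E_{\psi,\chi}$ and $E_{\varphi,\psi\chi}$. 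Applying Theorem~\ref{theorem:additivity} to the local system $V^{\eta^j}$ in both decompositions gives
\[
\tau_{m,d,j}(\varphi,\psi)+\tau_{m,d,j}(\varphi\psi,\chi)=\Sign(H_1(P_4;V^{\eta^j}))=\tau_{m,d,j}(\psi,\chi)+\tau_{m,d,j}(\varphi,\psi\chi),
\]
which rearranges to the $2$-cocycle identity $\delta\tau_{m,d,j}=0$.

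\textbf{Main obstacle.} The most delicate point is ensuring that Theorem~\ref{theorem:additivity} truly applies to the restricted local system $V^{\eta^j}$, rather than just to the full local system $H_1(\Sigma_h;\mathbb{C})$. This requires both the non-degeneracy of the restricted skew-hermitian form (handled by the eigenspace orthogonality noted above) and the fact that the gluings along the cutting curves respect the eigenspace decomposition, which is automatic because the transition maps lie in $C_h(t)$ and therefore commute with $t$. Once these points are settled the rest of the argument is a direct application of additivity, and integrality of $\tau_{m,d,j}$ is automatic since it is the signature of a hermitian form on a finite-dimensional complex vector space.
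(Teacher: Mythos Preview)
Your proposal is correct and follows essentially the same route as the paper: the paper's proof simply cites Meyer's original argument and says that ``applying additivity of signatures to two oriented $\Sigma_h$-bundles on $P$'' yields the cocycle identity, which is precisely your four-holed-sphere decomposition argument spelled out in detail. Your additional care about well-definedness and about why Theorem~\ref{theorem:additivity} applies to the restricted local system $V^{\eta^j}$ is appropriate (the paper leaves this implicit in the discussion preceding the lemma), but there is no substantive difference in strategy.
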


\begin{proof}
The proof is the same as for \cite[p.43 equation (0)]{meyer1972slk}.
Applying additivity of signatures to two oriented $\Sigma_h$-bundles on $P$,
we can see that $\tau_{m,d,j}$ satisfies
\[
\tau_{m,d,j}(\varphi_1,\varphi_2)+\tau_{m,d,j}(\varphi_1\varphi_2,\varphi_3)
=\tau_{m,d,j}(\varphi_1,\varphi_2\varphi_3)+\tau_{m,d,j}(\varphi_2,\varphi_3),
\]
for $\varphi_1,\varphi_2,\varphi_3\in\pi_0C_h(t)$.
\end{proof}

Since the deck transformation $t$ acts on $H^1(P,\partial P; V^{\eta^{j}})$ by multiplication of $\eta^{j}$,
we can calculate $\mathbb{Z}/d\mathbb{Z}$-signature as
\[
\Sign(H_1(P;V^{\eta^{j}}), t^k)
=\eta^{kj}\Sign(H_1(P; V^{\eta^{j}}))
=\eta^{kj}\tau_{m,d,j}(\varphi,\psi),
\]
for $0\le k\le m-1$.
Moreover, in \cite[Satz I.2.2]{meyer1972slk}, Meyer proved $\Sign(E_{\varphi,\psi},t^k)=\Sign(H_1(P;H^1(\Sigma_h;\mathbb{C})),t^k)$.
Hence, we have:

\begin{lemma}
For $0\le k\le m-1$,
\[
\Sign(E_{\varphi,\psi}, t^k)=\sum_{j=1}^{d-1}\eta^{kj}\tau_{m,d,j}(\varphi,\psi).
\]
\end{lemma}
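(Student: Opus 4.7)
The plan is to assemble three pieces already set up in the excerpt: (1) Meyer's Satz I.2.2, which identifies the $\mathbb{Z}/d\mathbb{Z}$-signature of the total space $E_{\varphi,\psi}$ with the signature of the hermitian form on $H_1(P;H^1(\Sigma_h;\mathbb{C}))$; (2) the orthogonal eigenspace decomposition $H_1(\Sigma_h;\mathbb{C})=\bigoplus_{j=1}^{d-1}V^{\eta^j}$ for the extended intersection form, which has just been verified; and (3) the identity $\Sign(H_1(P;V^{\eta^j}),t^k)=\eta^{kj}\tau_{m,d,j}(\varphi,\psi)$ recorded just before the statement.

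First I would argue that, because the structure group of $E_{\varphi,\psi}$ lies in $C_h(t)$, the eigenspace decomposition of $H_1(\Sigma_h;\mathbb{C})$ is preserved by all coordinate transformations. Therefore the decomposition lifts to a decomposition of the local system $H^1(\Sigma_h;\mathbb{C})$ on $P$ into the subsystems $V^{\eta^j}$, and by the orthogonality shown above it is an orthogonal decomposition with respect to the skew-hermitian pairing on the coefficients. Consequently, the induced hermitian form on $H_1(P;H^1(\Sigma_h;\mathbb{C}))$ splits as an orthogonal direct sum of the hermitian forms on $H_1(P;V^{\eta^j})$ for $j=1,\ldots,d-1$, and this splitting is preserved by the induced action of $t$.

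Next I would invoke additivity of the $G$-signature under $G$-equivariant orthogonal direct sums: for the isometry $t^k$ we obtain
\[
\Sign(H_1(P;H^1(\Sigma_h;\mathbb{C})),t^k)=\sum_{j=1}^{d-1}\Sign(H_1(P;V^{\eta^j}),t^k).
\]
Substituting the eigenvalue computation from (3) and then applying Meyer's identity (1) on the left completes the proof:
\[
\Sign(E_{\varphi,\psi},t^k)=\Sign(H_1(P;H^1(\Sigma_h;\mathbb{C})),t^k)=\sum_{j=1}^{d-1}\eta^{kj}\tau_{m,d,j}(\varphi,\psi).
\]

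There is essentially no serious obstacle, since each input has been prepared; the only point requiring a bit of care is checking that the splitting into $V^{\eta^j}$ really is orthogonal and $t$-equivariant as a splitting of local systems on $P$, so that signature additivity applies on the twisted homology $H_1(P;-)$ rather than merely on the fiber. This is immediate from the fact that the monodromies lie in $C_h(t)$ and that eigenspaces for distinct eigenvalues of the unitary isometry $t$ are mutually orthogonal for the extended skew-hermitian pairing.
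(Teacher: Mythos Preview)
Your argument is correct and is essentially the paper's own proof: the paper records the eigenvalue identity $\Sign(H_1(P;V^{\eta^{j}}),t^k)=\eta^{kj}\tau_{m,d,j}(\varphi,\psi)$ and Meyer's Satz~I.2.2 in the paragraph immediately preceding the lemma, leaving the orthogonal splitting of $H_1(P;H^1(\Sigma_h;\mathbb{C}))$ and the resulting additivity of $G$-signatures implicit. You have simply made that step explicit, which is fine.
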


\subsection{The symmetric mapping class groups}\label{section:symmetric mcg}
A diffeomorphism $f:\Sigma_h\to\Sigma_h$ in $C_h(t)$ induces a diffeomorphism $\bar{f}: S^2\to S^2$ which satisfies the commutative diagram
\[
\begin{CD}
\Sigma_h@>f>>\Sigma_h\\
@Vp_d VV@Vp_d VV\\
S^2 @>\bar{f}>> S^2.
\end{CD}
\]
Moreover, since $\bar{f}$ satisfies $p_d^{-1}(q)=p_d^{-1}(\bar{f}(q))$ for any $q\in S^2$, we have $\bar{f}\in\Diff_+(S^2,\{q_i\}_{i=1}^m)$. 
Therefore, we have a natural homomorphism $\mathcal{P}:\pi_0C_h(t)\to \mathcal{M}_0^m$
which maps $[f]$ to $[\bar{f}]$.
By a similar way to \cite[Theorem 1]{birman1969mcc} (see also \cite[Section 5]{birman1973ihr}),
we have:

\begin{lemma}\label{lemma:exact seq}
Let $m\ge4$. The sequence 
\[
\begin{CD}
1@>>>\mathbb{Z}/d\mathbb{Z}@>>>\pi_0C_h(t)@>\mathcal{P}>>\mathcal{M}_0^m@>>>1
\end{CD}
\]
is exact.
\end{lemma}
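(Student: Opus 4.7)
The plan is to follow the classical Birman--Hilden strategy, since the map $\mathcal{P}$ is the one arising from the branched cover $p_d$. Three things need to be checked: that $\mathcal{P}$ is a well-defined homomorphism, that $\mathcal{P}$ is surjective, and that $\ker\mathcal{P}=\langle t\rangle\cong\mathbb{Z}/d\mathbb{Z}$. Well-definedness is immediate once one notes that the branch locus $\{q_i\}_{i=1}^m=p_d(\mathrm{Fix}(t))$ is preserved setwise by any element of $C_h(t)$, and that an equivariant isotopy between $t$-commuting diffeomorphisms descends to an isotopy in $\Diff_+(S^2,\{q_i\}_{i=1}^m)$.

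For surjectivity, given $\bar g\in\Diff_+(S^2,\{q_i\}_{i=1}^m)$, I would lift $\bar g$ restricted to $S^2-\{q_i\}$ to the unbranched cover $\Sigma_h-p_d^{-1}(\{q_i\})\to S^2-\{q_i\}$. Such a lift exists precisely when the induced automorphism of $\pi_1(S^2-\{q_i\},*)$ preserves the kernel of the defining homomorphism $\rho:\alpha_i\mapsto 1\in\mathbb{Z}/d\mathbb{Z}$. Because $\bar g$ is orientation-preserving and merely permutes the punctures, each $\alpha_i$ is sent by $\bar g_*$ to a conjugate of $\alpha_{\sigma(i)}$ for some permutation $\sigma$, so $\rho\circ\bar g_*$ still sends every generator to $1$ (after absorbing a basepoint change into an inner automorphism), and the kernel is preserved. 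The resulting lift extends across the branch points by Riemann removable singularities for smooth equivariant maps to give a diffeomorphism $g:\Sigma_h\to\Sigma_h$, and the ambiguity in the lift (exactly the deck group $\langle t\rangle$) allows one to arrange $g\circ t=t\circ g$.

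For the kernel, the inclusion $\langle t\rangle\subseteq\ker\mathcal{P}$ is clear since $t$ descends to $\mathrm{id}_{S^2}$ and has order $d$. For the reverse inclusion, suppose $f\in C_h(t)$ with $\bar f$ isotopic to the identity in $\Diff_+(S^2,\{q_i\}_{i=1}^m)$. The Birman--Hilden equivariant isotopy-lifting theorem (as applied to $p_d$ in \cite{birman1973ihr}, generalizing \cite{birman1969mcc}) then promotes such an isotopy to a $t$-equivariant isotopy from $f$ to a deck transformation $t^k$, giving $[f]=t^k$ in $\pi_0C_h(t)$. This is the principal technical input, and it also forces $t^k\neq 1$ in $\pi_0C_h(t)$ unless $d\mid k$, ensuring the injectivity of $\mathbb{Z}/d\mathbb{Z}\to\pi_0C_h(t)$.

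The main obstacle is the last step: namely, that a $t$-equivariant diffeomorphism which is isotopic to a deck transformation non-equivariantly is already isotopic to one equivariantly. This is not a formal consequence of covering space theory and requires the Birman--Hilden theorem; once that is invoked, the rest of the argument is straightforward covering space bookkeeping. The hypothesis $m\geq 4$ guarantees $h\geq 1$ (with $h\geq 2$ except in the $(d,m)=(2,4)$ case of the torus, where the classical torus/hyperelliptic argument of Birman--Hilden applies directly), so the theorem is available throughout.
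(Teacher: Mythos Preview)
Your proposal is correct and follows precisely the Birman--Hilden strategy that the paper invokes: the paper does not give its own proof of this lemma but simply writes ``By a similar way to \cite[Theorem 1]{birman1969mcc} (see also \cite[Section 5]{birman1973ihr}), we have:'' before the statement. Your sketch fills in exactly those details---surjectivity via lifting through preservation of $\ker\rho$, and the kernel computation via the Birman--Hilden equivariant isotopy theorem---so there is nothing to compare.

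One small remark: in your surjectivity step you say the ambiguity in the lift ``allows one to arrange $g\circ t=t\circ g$.'' In fact no adjustment is needed: since every $\alpha_i$ maps to $1\in\mathbb{Z}/d\mathbb{Z}$ and $\bar g_*$ sends each $\alpha_i$ to a conjugate of some $\alpha_{\sigma(i)}$, the induced automorphism of the abelian deck group is automatically the identity, so \emph{every} lift already commutes with $t$. Also, ``Riemann removable singularities'' is not quite the right name for the smooth extension across branch points; the extension is a local model computation (the cover is $z\mapsto z^d$ near each $q_i$), but the conclusion is of course correct.
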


Let $s_i: S^2\to S^2$ be a half twist of the disk which exchanges the points $q_i$ and $q_{i+1}$ as in Figure~\ref{fig: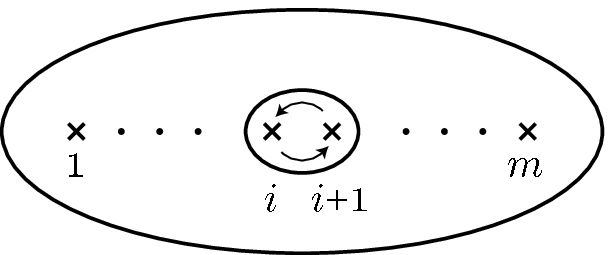}.
\begin{figure}[htbp]
  \begin{center}
    \includegraphics[height=2.6cm]{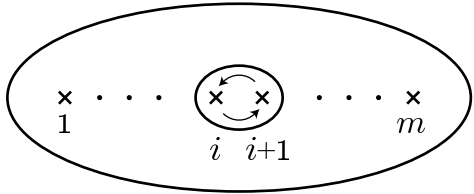}
  \end{center}
  \caption{the diffeomorphism $s_i$}
  \label{fig:disk1.eps}
\end{figure}
We denote by $\sigma_i\in\mathcal{M}_0^m$ the mapping class represented by $s_i$.
By lifting $s_i$, we have a unique diffeomorphism $\tilde{s}_i:\Sigma_h\to\Sigma_h$ which satisfies $\supp \tilde{s}_i=p_d^{-1}(\supp s_i)$.
Let us denote the path-connected component of $\tilde{s}_i$ by $\tilde{\sigma}_i$.
Note that when $d=2$, $\tilde{\sigma}_i$ is the dehn twist along a nonseparating simple closed curve.

\begin{lemma}\label{lemma:generator}
The set $\{\tilde{\sigma}_i\}_{i=1}^{m-1}\subset\pi_0C_h(t)$ generates the group $\pi_0C_h(t)$.
\end{lemma}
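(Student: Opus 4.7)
Let $G \subset \pi_0 C_h(t)$ denote the subgroup generated by $\{\tilde{\sigma}_i\}_{i=1}^{m-1}$. It is classical that the half twists $\sigma_1,\ldots,\sigma_{m-1}$ generate $\mathcal{M}_0^m$, since they generate the sphere braid group of which $\mathcal{M}_0^m$ is a quotient. Because $\mathcal{P}(\tilde{\sigma}_i)=\sigma_i$, we get $\mathcal{P}(G)=\mathcal{M}_0^m$. By the short exact sequence of Lemma~\ref{lemma:exact seq}, any $f\in\pi_0 C_h(t)$ can be written as $f = w\cdot t^{k}$ for some word $w$ in the $\tilde{\sigma}_i^{\pm 1}$ and some $k\in\mathbb{Z}/d\mathbb{Z}$, so it suffices to verify that the kernel $\langle t\rangle\cong\mathbb{Z}/d\mathbb{Z}$ is contained in $G$; equivalently, that a generator of $\langle t\rangle$ lies in $G$.

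To exhibit such a generator, I would start from the standard relation $(\sigma_1\sigma_2\cdots\sigma_{m-1})^m = 1$ in $\mathcal{M}_0^m$, which expresses the fact that the cyclic permutation of the marked points realized as a rigid rotation becomes isotopic to the identity after $m$ iterations inside $\Diff_+(S^2,\{q_i\}_{i=1}^m)$. Its lift
\[
\tau := (\tilde{\sigma}_1\tilde{\sigma}_2\cdots\tilde{\sigma}_{m-1})^m \in \pi_0 C_h(t)
\]
then automatically lies in $\ker\mathcal{P} = \langle t\rangle$, so $\tau = t^k$ for some $k\in\mathbb{Z}/d\mathbb{Z}$, and it remains to show that $k$ is a generator.

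For this, place the marked points $q_1,\ldots,q_m$ at the $m$-th roots of unity on the equator of $S^2$, so that $s_1s_2\cdots s_{m-1}$ is isotopic to the rigid rotation of $S^2$ by angle $2\pi/m$. This rotation has a canonical rotation lift $\tilde{r}\colon\Sigma_h\to\Sigma_h$ through the cyclic branched cover $p_d$. The iterate $\tilde{r}^m$ is a deck transformation, and comparing the monodromy of $\tilde{r}$ around any branch point of $p_d$ to the monodromy of the deck action (which multiplies the local coordinate by $\eta=\exp(2\pi\sqrt{-1}/d)$) pins down $\tilde{r}^m = t$. Since $\tau$ is the homotopy class of $\tilde{r}^m$, we conclude $\tau = t$, so $t\in G$ and $G=\pi_0 C_h(t)$.

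The main obstacle is the last step: carefully justifying that the lift of a full rotation equals the deck generator $t$, and not the identity or a nontrivial proper power. All the sign conventions — the handedness of the half twists $s_i$, the choice of generator of $\mathbb{Z}/d\mathbb{Z}$ used in the construction of $p_d$, and the orientation of the lifted rotation — must be tracked consistently. A cleaner, less self-contained alternative is to invoke the Birman--Hilden description of $\pi_0 C_h(t)$ as a central $\mathbb{Z}/d\mathbb{Z}$-extension of $\mathcal{M}_0^m$, in which the image of $(\tilde{\sigma}_1\cdots\tilde{\sigma}_{m-1})^m$ in the kernel is known to be a generator.
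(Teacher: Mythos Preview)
Your reduction is correct and matches the paper's: since $\mathcal{P}(\tilde{\sigma}_i)=\sigma_i$ and the $\sigma_i$ generate $\mathcal{M}_0^m$, it suffices to exhibit $[t]$ as a word in the $\tilde{\sigma}_i$. The gap is in your choice of word. In fact
\[
\tau=(\tilde{\sigma}_1\cdots\tilde{\sigma}_{m-1})^m=1\in\pi_0C_h(t),
\]
so $\tau$ does not generate $\langle t\rangle$. For $d=2$ this is just the chain relation (Lemma~\ref{lem:chain}): with $m=2g+2$ one has $(t_{c_1}\cdots t_{c_{2g+1}})^{2g+2}=t_{d_1}t_{d_2}$, and the curves $d_1,d_2$ bound disks in $\Sigma_g$. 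For general $d$ one sees it in the algebraic model $y^d=x^m-1$ of the cyclic cover with the $q_i$ at the $m$-th roots of unity: the lifts of the rotation $r\colon x\mapsto\zeta x$ are $(x,y)\mapsto(\zeta x,\eta^k y)$, and since $d\mid m$ every such lift has $m$-th power the identity. Equivalently, any two lifts of $r$ differ by a power of $t$, and $(t^k\tilde r)^m=t^{km}\tilde r^m=\tilde r^m$ because $t^m=1$; so the $m$-th power is independent of the lift and equals $1$. Your monodromy heuristic does not rescue this: $r$ fixes the poles, which are \emph{unbranched}, and permutes the branch points $q_i$ cyclically, so there is no fixed branch point whose local rotation number could detect a nontrivial deck transformation.

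The paper instead lifts the \emph{other} sphere relator, taking
\[
\tilde{\sigma}_1\cdots\tilde{\sigma}_{m-2}\tilde{\sigma}_{m-1}^2\tilde{\sigma}_{m-2}\cdots\tilde{\sigma}_1,
\]
and observes that downstairs $s_1\cdots s_{m-2}s_{m-1}^2s_{m-2}\cdots s_1$ is isotopic rel $\{q_i\}$ to the inverse Dehn twist $t_d^{-1}$ along a small circle $d$ bounding a disk $D$ that contains the base point $*$ but none of the $q_i$. One lift $\tilde f_1$ of $t_d^{-1}$ is obtained by lifting this isotopy (so it fixes $p^{-1}(*)$ pointwise), while another lift $\tilde f_2$ of $t_d$ is supported in $p^{-1}(D)$ and acts on $p^{-1}(*)$ exactly as $t$ does. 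Since $t_d\simeq\mathrm{id}$ in $\Diff_+(S^2,\{q_i\})$, one gets $[\tilde f_2]=1$ and $[\tilde f_1]=[\tilde f_1\tilde f_2]=[t]$. The crucial difference from your approach is that this relator is (isotopic to) a twist about a curve bounding a disk containing a single unbranched point, so the comparison of lifts can be read off directly from the covering action on $p^{-1}(*)$; your rotation has no such local model available.
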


\begin{proof}
Since $\{\sigma_i\}_{i=1}^{m-1}$ generates the group $\mathcal{M}_0^m$,
it suffices to represent $[t]\in \pi_0C_h(t)$ as a product of $\{\sigma_i\}_{i=1}^{m-1}$.
Let $C_h^{(*)}(t)$ denote the subgroup of $C_h(t)$ defined by $C_h^{(*)}(t)=\{f\in C_h(t)\,|\, f(p_d^{-1}(*))=p_d^{-1}(*)\}$.
Only in this proof, we also call the Dehn twists the representatives of mapping classes of Dehn twists.
The diffeomorphism $s_1\cdots s_{m-2} s_{m-1}^2 s_{m-2}\cdots s_1$ in $\Diff_+(S^2,\{q_i\}_{i=1}^m)$ is isotopic to the product of Dehn twists $t_{c}^{-1}t_{c'}$ in Figure~\ref{fig: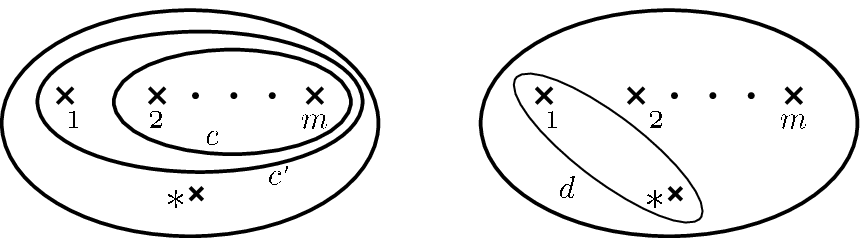},
and it is also isotopic to the Dehn twist $t_{d}^{-1}$.
\begin{figure}[htbp]
  \begin{center}
    \includegraphics[height=2.3cm]{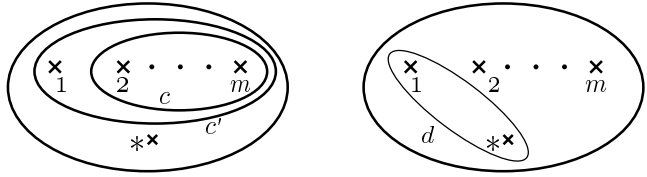}
  \end{center}
  \caption{the curves $c$, $c'$, $d$}
  \label{fig:twistcurves.eps}
\end{figure}

Therefore, the lift $\tilde{s}_1\cdots \tilde{s}_{m-2}\tilde{s}_{m-1}^2\tilde{s}_{m-2}\cdots \tilde{s}_1$ is isotopic to some lift $\tilde{f}_1:\Sigma_h\to\Sigma_h$ of $t_d^{-1}$.
Since we can choose the isotopy in $\Diff_+(S^2,\{q_i\}_{i=1}^m)$ so that it does not move $*$,
the lift $\tilde{f}_1$ fixes $p^{-1}(*)$ pointwise.
Let $D$ be the closed disk which is bounded by $d$ and contains $*$,
and $\tilde{f}_2$ denote the lift of $t_d$ which satisfies $\supp \tilde{f}_2\subset p^{-1}(D)$.
Since $f_1f_2$ is a lift of the identity map of $S^2$,
and the action of $\tilde{f}_2$ on $p^{-1}(*)$ coincides with that of $t$, we have $\tilde{f}_1\tilde{f}_2=t\in \Diff_+\Sigma_h$.
Since $t_d$ is isotopic to the identity map in $\Diff_+\Sigma_h$, we have  $[\tilde{f}_2]=1\in\pi_0C_h(t)$.
Thus, we obtain
\[
\tilde{\sigma}_1\cdots\tilde{\sigma}_{m-2}\tilde{\sigma}_{m-1}^2\tilde{\sigma}_{m-2}\cdots\tilde{\sigma}_1
=[\tilde{f}_1]=[\tilde{f}_1\tilde{f}_2]=[t]\in\pi_0C_h(t).
\]
\end{proof}

\subsection{The cobounding function of the cocycles $\tau_{m,d,j}$}\label{section:cobounding function}

Recall that, for an integer $d$ with $d|m$, we have a covering space $p_d:\Sigma_h\to S^2$.
Let $g$ be an integer $g=(m-1)(m-2)/2$.
If we consider the case when $d=m$, we also have the $m$-cyclic covering on $S^2$ whose genus of the covering surface is $g$.
thus we identify it with the surface $\Sigma_g$, and denote the covering by $p:\Sigma_g\to S^2$.

Since the quotient space $\Sigma_g/\braket{t^d}$ is also a $d$-cyclic covering of $S^2$ with $m$ branched points,
we can identify $\Sigma_h\cong\Sigma_g/\braket{t^d}$.
Since a diffeomorphism $f\in C_g(t)$ induces a diffeomorphism $\bar{f}$ on $\Sigma_g/\braket{t^{d}}$ which commutes with $t$,
we have a natural homomorphism $\mathcal{P}:\pi_0C_g(t)\to \pi_0C_h(t)$ which maps $[f]$ to $[\bar{f}]$.
Since $H^*(\pi_0C_h(t);\mathbb{Q})\cong H^*(\mathcal{M}_0^m;\mathbb{Q})$, and $H^*(\mathcal{M}_0^m;\mathbb{Q})$ is trivial (see \cite{cohen1987hmc} Corollary 2,2),
there exists a unique cobounding function of $\tau_{m,d,j}$.
Denote it by $\phi_{m,d,j}:\pi_0C_h(t)\to \mathbb{Q}$.

Actually, Gambaudo and Ghys \cite{gambaudo2005bs} already found these quasimorphisms and calculated the values of their homogenizations.
They considered the mapping class groups of pointed disks and calculated the quasimorphisms defined on them,
while we consider the mapping class groups $\mathcal{M}_0^m$ of pointed spheres.
In other words, they considered signatures of  surface bundles on a pair of pants whose fibers are surfaces with boundary instead of the closed surface $\Sigma_g$.

Let $n$ be a positive integer, and choose distinct $n$ points $\{q'_i\}_{i=1}^n$ in $D^2$. 
It is known that the mapping class group which fixes boundary pointwise and $\{q'_i\}_{i=1}^n$ setwise is isomorphic to the braid group $B_n$.
When $n\le m$, the inclusion map $(D^2, \{q_i\}_{i=1}^n) \to (S^2, \{q_i\}_{i=1}^m)$ which maps $q'_i$ to $q_i$ induces homomorphism  $\iota:B_n\to \mathcal{M}_0^m$.
Denote the half twist $\sigma'_i\in B_n$ which permutes $q'_i$ and $q'_{i+1}$.
Then, we have $\iota(\sigma'_1\sigma'_2\cdots\sigma'_{n-1})=\sigma_1\sigma_2\cdots\sigma_{n-1}$.
They calculated the value of their quasimorphisms on $\sigma'_1\sigma'_2\cdots\sigma'_{n-1}$ in Proposition 5.2.
However, their values are different from our computation of $\bar{\phi}_{m,j}(\sigma_1\sigma_2\cdots\sigma_{n-1})$ in Theorem~\ref{theorem:meyer function} since we are considering closed surfaces as fiber surfaces.

This construction is also similar to higher-order signature cocycles in Cochran-Harvey-Horn's paper \cite{cochran2012hos}.
They also considered regular coverings on a surface with boundary,
since they want to treat coverings including non-abelian and non-finite ones.
We should emphasize that the coverings we are considering are the only case
when $\pi_0 C_h(t)$ is a finite group extension of the full mapping class group $\mathcal{M}_0^m$ of a $m$-pointed sphere
with respect to a natural homomorphism $\mathcal{P}:\pi_0 C_h(t)\to \mathcal{M}_0^m$ in Section~\ref{section:symmetric mcg}.

By Birman-Hilden \cite[Theorem 1]{birman1973ihr},
the natural homomorphism $\pi_0C_h(t)\to \mathcal{M}_h$ defined by $[f]\mapsto[f]$ is injective.
In particular, if we consider the case when $m$ is even and the double covering $p_2:\Sigma_h\to S^2$, 
this homomorphism induces isomorphism between $\pi_0C_h(t)$ and $\mathcal{H}_h$.
In this case, the eigenspace $V^{-1}$ coincides with $H_1(\Sigma_h;\mathbb{C})$.
Thus, we have:
\begin{remark}\label{rem:meyer function}
When $m$ is even, $\phi_{m,2,1}:\pi_0C_h(t)\to\mathbb{Q}$ is equal to the Meyer function $\phi_h:\mathcal{H}_h\to\mathbb{Q}$ on the hyperelliptic mapping class group, under the natural isomorphism $\pi_0C_h(t)\cong \mathcal{H}_h$.
\end{remark}

\begin{lemma}\label{lemma:meyer cocycles and coverings}
For $1\le j\le d-1$ and $\varphi\in\pi_0C_g(t)$,
\[
\phi_{m,m,mj/d}(\varphi)
=\phi_{m,d,j}(\mathcal{P}(\varphi)).
\]
\end{lemma}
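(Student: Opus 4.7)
My plan is to prove the identity by comparing 2-cocycles directly: I would show that the pulled-back cocycle $\mathcal{P}^*\tau_{m,d,j}$ on $\pi_0 C_g(t)$ equals $\tau_{m,m,mj/d}$, and then invoke uniqueness of cobounding functions to conclude that the cobounding quasimorphisms agree.

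The first step is to identify eigenspaces. Since $\Sigma_h=\Sigma_g/\langle t^d\rangle$, taking $\langle t^d\rangle$-invariants with $\mathbb{C}$-coefficients yields a canonical isomorphism $H_1(\Sigma_h;\mathbb{C})\cong H_1(\Sigma_g;\mathbb{C})^{\langle t^d\rangle}$, realized for instance by the transfer, under which the residual deck action $\bar t$ on $\Sigma_h$ corresponds to the restriction of $t$. Writing $\zeta=\exp(2\pi\sqrt{-1}/m)$, the eigenvalue equation $\bar t v=\eta^j v$ becomes $t v=\zeta^{mj/d}v$, so $V_h^{\eta^j}$ is naturally identified with $V_g^{\zeta^{mj/d}}$. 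Moreover, transferring the intersection pairing multiplies it by the covering degree $m/d$, so the skew-hermitian form on $V_h^{\eta^j}$ corresponds to a positive scalar multiple of the skew-hermitian form on $V_g^{\zeta^{mj/d}}$.

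Next, I would translate this to bundles over $P$. For $\varphi,\psi\in\pi_0 C_g(t)$, the $\Sigma_g$-bundle $E_{\varphi,\psi}$ carries a fiberwise $\langle t^d\rangle$-action whose quotient is precisely the $\Sigma_h$-bundle $E_{\mathcal{P}(\varphi),\mathcal{P}(\psi)}$. The monodromies of $\mathcal{P}(\varphi)$ and $\mathcal{P}(\psi)$ on $H_1(\Sigma_h;\mathbb{C})$ are restrictions of those of $\varphi$ and $\psi$, so the local systems $V_h^{\eta^j}$ and $V_g^{\zeta^{mj/d}}$ on $P$ are canonically isomorphic, and the induced hermitian forms on $H_1(P;V_h^{\eta^j})$ and $H_1(P;V_g^{\zeta^{mj/d}})$ differ only by the positive factor $m/d$. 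Since signatures are invariant under positive rescaling,
\[
\tau_{m,d,j}(\mathcal{P}(\varphi),\mathcal{P}(\psi))=\tau_{m,m,mj/d}(\varphi,\psi).
\]

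Finally, since $\mathcal{P}$ is a homomorphism, $\phi_{m,d,j}\circ\mathcal{P}$ cobounds this common cocycle on $\pi_0 C_g(t)$. Because $\pi_0 C_g(t)$ is a finite extension of $\mathcal{M}_0^m$ by $\mathbb{Z}/m\mathbb{Z}$, the Hochschild--Serre spectral sequence with $\mathbb{Q}$-coefficients gives $H^1(\pi_0 C_g(t);\mathbb{Q})\cong H^1(\mathcal{M}_0^m;\mathbb{Q})=0$, so the cobounding function of a given 2-cocycle is unique. Hence $\phi_{m,d,j}\circ\mathcal{P}=\phi_{m,m,mj/d}$, as required. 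The main obstacle I anticipate is the first step: one must verify carefully that the natural identification of $H_1(\Sigma_h;\mathbb{C})$ with the $\langle t^d\rangle$-invariants in $H_1(\Sigma_g;\mathbb{C})$ intertwines the two intersection pairings up to a positive scalar (taking care at the branch points of $\Sigma_g\to\Sigma_h$), since this scaling controls the entire signature comparison on which the rest of the proof rests.
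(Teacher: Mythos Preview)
Your proposal is correct and follows essentially the same route as the paper: both reduce the statement to the cocycle identity $\tau_{m,m,mj/d}(\varphi,\psi)=\tau_{m,d,j}(\mathcal{P}(\varphi),\mathcal{P}(\psi))$ via uniqueness of the cobounding function (using $H^1(\pi_0C_g(t);\mathbb{Q})=0$), identify the eigenspaces $(V_g)^{\omega^{mj/d}}\cong(V_h)^{\eta^j}$ through the quotient $\Sigma_g\to\Sigma_g/\langle t^d\rangle$, and check that the intersection pairings agree up to the positive scalar $m/d$ so that the signatures coincide. The only minor difference is that the paper verifies the scaling of the intersection form by an explicit computation with loops avoiding the branch locus, whereas you invoke the transfer; your caveat about handling the branch points is exactly the point the paper's explicit argument addresses.
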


\begin{proof}
Since $H_1(\pi_0C_g(t);\mathbb{Q})$ is trivial,
it suffices to show that $\tau_{m,m,mj/d}(\varphi,\psi)=\tau_{m,d,j}(\mathcal{P}(\varphi),\mathcal{P}(\psi))$ for $\varphi, \psi\in\pi_0C_g(t)$.
If $f:E\to P$ is an oriented $\Sigma_g$-bundle with structure group $C_g(t)$,
the induced map $\bar{f}:E/\braket{t^{d}}\to P$ is an oriented $\Sigma_h$-bundle with structure group $C_h(t)$.
If we denote the monodromies of $f$ along $\alpha$ and $\beta$ by $\varphi$ and $\psi$, the ones of $\bar{f}$ are $\mathcal{P}(\varphi)$ and $\mathcal{P}(\psi)$.

Let $\omega$ be the $m$-th root of unity $\exp(2\pi\sqrt{-1}/m)$,
and let $q_d:\Sigma_g\to \Sigma_g/\braket{t^{d}}$ denote the projection.
To distinguish eigenspaces of $H_1(\Sigma_g;\mathbb{C})$ and $H_1(\Sigma_h;\mathbb{C})$ of the action by $t$,
we denote them by $(V_g)^z$ and $(V_h)^z$ instead of $V^z$, respectively.
The projection $q_d$ induces the isomorphism $H_1(\Sigma_g;\mathbb{C})^{\braket{t^d}}\cong H_1(\Sigma_h;\mathbb{C})$.
Moreover, we have $(V_g)^{\omega^{mj/d}}\cong (V_h)^{\eta^j}$.
Hence, it also induces a natural isomorphism between
$H_1(P;(V_g)^{\omega^{mj/d}})$ and $H_1(P;(V_h)^{\eta^j})$,
where $(V_g)^{\omega^{mj/d}}$ and $(V_h)^{\eta^j}$ are local systems coming from $f$ and $\bar{f}$.

Let $\tilde{a}$, $\tilde{b}$ be loops in $\Sigma_g-\{q_i\}_{i=1}^m$.
We may assume that $q_d(\tilde{a})\cup q_d(\tilde{b})$ has no triple point.
Then, the intersection number $[q_d(\tilde{a})] \cdot [q_d(\tilde{b})]$ in $\Sigma_h$
coincides with $[q_d^{-1}(q_d(\tilde{a}))]\cdot [\tilde{b}]$ in $\Sigma_g$.
Hence, we have
\begin{align*}
\sum_{i=0}^{m/d-1}[(t^{di})_*\tilde{a}]\cdot \sum_{j=0}^{m/d-1}[(t^{dj})_*\tilde{b}]
&=\sum_{i=0}^{m/d-1}\sum_{j=0}^{m/d-1}[(t^{{di-dj}})_*\tilde{a}]\cdot[\tilde{b}]\\
&=\frac{m}{d}[q_d^{-1}(q_d(\tilde{a}))]\cdot [\tilde{b}]\\
&=\frac{m}{d}[q_d(\tilde{a})]\cdot [q_d(\tilde{b})].
\end{align*}

Therefore, the isomorphism $H_1(\Sigma_g;\mathbb{C})^{\braket{t^d}}\cong H_1(\Sigma_h;\mathbb{C})$ induced by the quotient map $q_d:\Sigma_g\to\Sigma_h$ preserves the intersection form up to constant multiple.
Thus, it also preserves the intersection forms on $H_1(P;(V_g)^{\omega^{mj/d}})$ and $H_1(P;(V_h)^{\eta^j})$, and we obtain
\begin{align*}
\tau_{m,m,mj/d}(\varphi,\psi)
&=\Sign(H_1(P;(V_g)^{\omega^{mj/d}}))\\
&=\Sign(H_1(P;(V_h)^{\eta^j}))\\
&=\tau_{m,d,j}(\mathcal{P}(\varphi),\mathcal{P}(\psi)).
\end{align*}
\end{proof}

By Lemma~\ref{lemma:meyer cocycles and coverings},
it suffices to consider the case when $d=m$,
we simply denote $\tau_{m,m,j}=\tau_{m,j}$ and $\phi_{m,m,j}=\phi_{m,j}$.

\begin{lemma}
\[
\phi_{m,j}(\varphi)=\phi_{m,m-j}(\varphi).
\]
\end{lemma}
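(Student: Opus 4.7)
The plan is to establish the equality of $2$-cocycles $\tau_{m,j} = \tau_{m,m-j}$ on $\pi_0 C_g(t) \times \pi_0 C_g(t)$, then invoke the uniqueness of cobounding functions (established in the discussion preceding this lemma via $H^*(\pi_0 C_g(t);\mathbb{Q}) \cong H^*(\mathcal{M}_0^m;\mathbb{Q})$ and the vanishing of the latter in positive degrees) to conclude that $\phi_{m,j} = \phi_{m,m-j}$.

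For the cocycle equality, the key observation is that the local systems $V^{\omega^j}$ and $V^{\omega^{m-j}}$ on $P$ are identified by complex conjugation. Since $t$ and every representative of an element of $C_g(t)$ is a real diffeomorphism, its induced $\mathbb{C}$-linear action on $H_1(\Sigma_g;\mathbb{C})$ commutes with the complex conjugation $c$. In particular, if $v \in V^{\omega^j}$ then $t_*\bar v = \overline{t_*v} = \omega^{-j}\bar v = \omega^{m-j}\bar v$, so $c$ restricts to an $\mathbb{R}$-linear isomorphism $V^{\omega^j} \xrightarrow{\sim} V^{\omega^{m-j}}$, and this isomorphism is equivariant with respect to the monodromies $\varphi_*, \psi_*$ of any bundle $E_{\varphi,\psi} \to P$ with structure group in $C_g(t)$. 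Hence $c$ yields an $\mathbb{R}$-linear isomorphism of local systems on $P$.

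From the explicit formula defining the skew-hermitian intersection form on $H_1(\Sigma_g;\mathbb{C})$ one directly verifies $\bar v \cdot \bar w = \overline{v \cdot w}$, which simply expresses the fact that the form is the sesquilinear extension of the real antisymmetric intersection form on $H_1(\Sigma_g;\mathbb{R})$. Transferring through $c$ and choosing compatible bases on $V^{\omega^j}$ and $V^{\omega^{m-j}}$, the Gram matrix of the induced hermitian form on $H_1(P;V^{\omega^{m-j}})$ becomes the entrywise complex conjugate of the one on $H_1(P;V^{\omega^j})$. Since the eigenvalues of a hermitian matrix are real and unchanged by entrywise conjugation, the two signatures agree, giving $\tau_{m,j}(\varphi,\psi) = \tau_{m,m-j}(\varphi,\psi)$ as desired.

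The only genuinely delicate step is tracking the $\mathbb{C}$-antilinearity of $c$: because it reverses complex scalars, one must check carefully that the induced change in Gram matrices really is entrywise conjugation rather than its transpose or negative. This is routine bookkeeping once the sesquilinear convention is fixed, and the resulting sign-free conclusion is also consistent with the symmetry $j \leftrightarrow m-j$ visible in the formulas of Theorem~\ref{theorem:meyer function}.
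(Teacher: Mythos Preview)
Your proposal is correct and follows essentially the same approach as the paper: complex conjugation identifies $V^{\omega^j}$ with $V^{\omega^{m-j}}$ as local systems, the induced hermitian forms on $H_1(P;\,\cdot\,)$ are related by conjugation (the paper writes this as $\langle x,y\rangle_j=\overline{\langle i_*x,i_*y\rangle}_{m-j}$), hence $\tau_{m,j}=\tau_{m,m-j}$, and uniqueness of the cobounding function finishes. Your version is slightly more explicit about the equivariance of $c$ with respect to the $C_g(t)$-action, which is exactly what justifies passing to local systems on $P$.
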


\begin{proof}
By taking complex conjugates,
we have an isomorphism $i:V^{\omega^j}\cong V^{\omega^{m-j}}$.
Moreover, it induces the isomorphism $i_*:H_1(P;V^{\omega^j})\cong H_1(P;V^{\omega^{m-j}})$.

Let us denote the hermitian form on $H_1(P;V^{\omega^j})$ by $\braket{\ ,\ }_j$.
By the definition of the hermitian form,
we have $\braket{x,y}_j=\overline{\braket{i_*x,i_*y}}_{m-j}$ for $x,y\in H_1(P;V^{\omega^j})$,
where $\overline{z}$ is a complex conjugate of $z\in \mathbb{C}$.
Thus, the signatures of the hermitian forms $\braket{\ ,\ }_j$ and $\braket{\ ,\ }_{m-j}$ coincide,
and the cobounding functions of $\tau_{m,j}$ and $\tau_{m,m-j}$ also coincide.
\end{proof}

\section{The homogeneous Meyer function}\label{section:meyer function}
In this section, we will prove Theorem~\ref{thm:defect}.
In Section \ref{sec:inequality},
we give an inequality between a quasimorphism and its homogenization when it is antisymmetric and a class function (Lemma \ref{lem:upper bound}),
and prove Theorem~\ref{thm:defect} (i).
In Section \ref{sec:lower bound},
we prove Theorem~\ref{thm:defect} (ii) by giving a lower bound on the defect of $\phi_{m,m/2}:\pi_0C_g(t)\to\mathbb{R}$,
which is the cobounding function of the 2-cocycle $\tau_{m,m/2}$.

\subsection{Proof of Theorem~\ref{thm:defect} (i)}\label{sec:inequality}
\begin{lemma}\label{lem:upper bound}
Let $G$ be a group, and $\phi:G\to \mathbb{R}$ a quasi-morphism satisfying
\[
\phi(xyx^{-1})=\phi(y),\ 
\phi(x^{-1})=-\phi(x).
\]
Then, we have
\[
D(\bar{\phi})\le D(\phi),
\]
where $\bar{\phi}$ is the homogenization of $\phi$.
\end{lemma}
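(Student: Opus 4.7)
The plan is to prove $|\bar{\phi}(xy) - \bar{\phi}(x) - \bar{\phi}(y)| \le D(\phi)$ for every $x, y \in G$, which yields $D(\bar{\phi}) \le D(\phi)$. My strategy combines a sharp bound on $\phi$-values of commutator products (exploiting both hypotheses) with Bavard's duality theorem.

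First, I claim that the two hypotheses jointly imply $|\phi([a,b])| \le D(\phi)$ for any single commutator. By cyclic invariance (class function), $\phi([a,b]) = \phi((a^{-1}b^{-1})(ab))$; the quasimorphism inequality gives $|\phi([a,b]) - \phi(a^{-1}b^{-1}) - \phi(ab)| \le D(\phi)$; and combining antisymmetry with cyclic invariance, $\phi(a^{-1}b^{-1}) = -\phi(ba) = -\phi(ab)$, so the two middle terms cancel, leaving $|\phi([a,b])| \le D(\phi)$. Iterating the quasimorphism inequality, a product of $k$ commutators $c_1 \cdots c_k$ satisfies $|\phi(c_1 \cdots c_k)| \le (2k - 1) D(\phi)$.

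Applying this to $g^n$ for any $g \in [G,G]$, we obtain $|\phi(g^n)| \le (2\cl(g^n) - 1)D(\phi)$; dividing by $n$ and letting $n \to \infty$ gives the stable bound
\[
|\bar{\phi}(g)| \le 2\, \scl(g)\, D(\phi) \quad \text{for every } g \in [G, G].
\]
Now invoking Bavard's duality theorem (Theorem~\ref{Ba}) in its dual form $D(\bar{\phi}) = \sup_{g \in [G,G],\, \scl(g) > 0} |\bar{\phi}(g)|/(2\scl(g))$, which follows from the primal statement by standard convex duality between the $\scl$ pseudonorm on $B_1^H(G)$ and the defect norm on $Q_h(G)/H^1$, we conclude
\[
D(\bar{\phi}) = \sup_g \frac{|\bar{\phi}(g)|}{2\, \scl(g)} \le \sup_g \frac{2\, \scl(g)\, D(\phi)}{2\, \scl(g)} = D(\phi).
\]

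The main obstacle is the passage through the dual form of Bavard's duality: while the primal statement is exactly Theorem~\ref{Ba}, the identity $D(\bar{\phi}) = \sup_g |\bar{\phi}(g)|/(2\scl(g))$ requires a convex-analytic argument that is standard but not written out in the paper. If one wishes to bypass the dual form, one can instead apply the stable bound to the specific family $w_n := (xy)^n x^{-n} y^{-n} \in [G,G]$ and use the quasimorphism bound for $\bar{\phi}$ to obtain $|\bar{\phi}(w_n) - nL| \le 2D(\bar{\phi})$ with $L = \bar{\phi}(xy) - \bar{\phi}(x) - \bar{\phi}(y)$; combining with $|\bar{\phi}(w_n)| \le 2\scl(w_n) D(\phi)$ and the (delicate) estimate $\scl(w_n) \le n/2 + O(1)$ (which one would establish by the recursive identity $w_{k+1} = [x, y^{k+1}] \cdot (y w_k y^{-1})$ together with subadditivity of $\scl$ along such decompositions) again gives $|L| \le D(\phi)$.
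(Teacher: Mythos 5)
Your preliminary steps are correct and the strategy is genuinely different from the paper's: the bound $|\phi([a,b])|\le D(\phi)$ from the two hypotheses, its extension to $|\phi(c_1\cdots c_k)|\le (2k-1)D(\phi)$ for a product of $k$ commutators, and the resulting stable inequality $|\bar\phi(g)|\le 2\scl(g)D(\phi)$ are all valid. The gap is in the final step. The identity $D(\bar\phi)=\sup_g|\bar\phi(g)|/(2\scl(g))$ does \emph{not} follow from Theorem~\ref{Ba} by convex duality: Theorem~\ref{Ba} gives, for each $g$, $\scl(g)\ge|\bar\phi(g)|/(2D(\bar\phi))$, i.e.\ $\sup_g|\bar\phi(g)|/(2\scl(g))\le D(\bar\phi)$ --- the \emph{wrong} direction for your purposes. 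The direction you need, $D(\bar\phi)\le\sup_g|\bar\phi(g)|/(2\scl(g))$, is equivalent to Bavard's lemma $D(\bar\phi)=\sup_{a,b}|\bar\phi([a,b])|$ for homogeneous quasimorphisms (Calegari, \emph{scl}, Lemma~2.24) together with Culler's bound $\scl([a,b])\le 1/2$; both are true and standard, but they are separate combinatorial theorems, not formal consequences of the primal duality statement, and neither appears in this paper. Your fallback route has a concrete error: the proposed recursion $w_{k+1}=[x,y^{k+1}]\cdot(yw_ky^{-1})$ fails already at $k=1$, where $[x,y^2]\cdot(y[x,y]y^{-1})=xy^2x^{-1}y^{-1}xyx^{-1}y^{-2}$ and $w_2=xyxyx^{-2}y^{-2}$ are distinct reduced words in the free group. (The estimate $\scl((xy)^nx^{-n}y^{-n})\le n/2+O(1)$ is nonetheless true, by Culler's identities, so the fallback is also salvageable but not as written.)

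The cleanest repair of your argument is to cite Bavard's lemma directly and skip the ``dual form'' entirely: $D(\bar\phi)=\sup_{a,b}|\bar\phi([a,b])|\le\sup_{a,b}2\scl([a,b])D(\phi)\le 2\cdot\tfrac12\cdot D(\phi)=D(\phi)$, using your stable bound. This works but imports two external results. The paper instead gives a self-contained, elementary proof: it establishes the identity of Lemma~\ref{lem:a3b3}, namely $\phi((ab)^3x)-\phi(xba^3b^2)=\delta\phi([b,a],(ab)^3x)+\delta\phi([a,b],babxba^2)$ (which is where the class-function and antisymmetry hypotheses enter), uses it $3^{k-1}$ times to get $|\phi((ab)^{3^k})-\phi((a^3b^3)^{3^{k-1}})|\le 2D(\phi)\cdot 3^{k-1}$, and telescopes over $k=1,\dots,n$ to obtain $|\phi((ab)^{3^n})-\phi(a^{3^n}b^{3^n})|\le D(\phi)(3^n-1)$, whence $|\delta\bar\phi(a,b)|\le D(\phi)$ directly. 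You should either adopt such a direct telescoping argument or explicitly invoke Bavard's lemma with a reference; as it stands, the justification ``standard convex duality'' papers over the essential content.
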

In \cite{endo2000mss} Proposition 3.1,
Endo showed that the Meyer function $\phi_g:\mathcal{H}_g\to \mathbb{Q}$ satisfies the conditions in Lemma~\ref{lem:upper bound}. 
The quasimorphisms $\bar{\phi}_{m,j}$ also satisfy these conditions.

In \cite{turaev1987fsc},
Turaev defined another 2-cocycle on the symplectic group.
In \cite{endo2005srm} Proposition A.3,
Endo and Nagami showed that his cocycle coincides with the Meyer cocycle up to sign.
Since Turaev's cocycle is defined by the signature on a vector space of rank less than or equal to $m-2$,
A similar argument shows $D(\phi_{m,j})\le m-2$.
Thus, Theorem~\ref{thm:defect} (i) follows from Lemma~\ref {lem:upper bound}.

\begin{lemma}\label{lem:a3b3}
For any $a,b,x\in G$,
\[
\phi((ab)^3x)-\phi(xba^3b^2)=\delta\phi([b,a],(ab)^3x)+\delta\phi([a,b],babxba^2).
\]
\end{lemma}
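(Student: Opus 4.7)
The plan is to prove this identity by direct computation, using both properties of $\phi$ imposed in Lemma~\ref{lem:upper bound}: that $\phi$ is a class function (so $\phi(UV) = \phi(VU)$ for all $U,V \in G$) and antisymmetric (so $\phi(u^{-1}) = -\phi(u)$). Neither conclusion can hold for an arbitrary function, since the six words appearing in the expanded identity are pairwise distinct in the free group on $\{a,b,x\}$, so the two hypotheses are genuinely used.

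First, I would reduce the two composite arguments inside the coboundaries. Writing $[b,a] = bab^{-1}a^{-1}$ and $(ab)^3 = ababab$, the obvious cancellations give $[b,a](ab)^3 x = ba^2 bab x$; similarly $[a,b]\cdot babxba^2 = aba^{-1}b^{-1}\cdot babxba^2 = ab^2 xba^2$. Expanding each coboundary via $\delta\phi(u,v) = \phi(u) + \phi(v) - \phi(uv)$, the right-hand side becomes
\[
\phi([b,a]) + \phi([a,b]) + \phi((ab)^3 x) + \phi(babxba^2) - \phi(ba^2 babx) - \phi(ab^2 xba^2).
\]

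Second, I would collapse this sum using the two hypotheses. Since $[b,a] = [a,b]^{-1}$, antisymmetry gives $\phi([b,a]) + \phi([a,b]) = 0$. Applying $\phi(UV)=\phi(VU)$ with $U = babx$ and $V = ba^2$ yields $\phi(babxba^2) = \phi(ba^2 babx)$, killing another pair of terms. A final cyclic permutation with $U = ab^2$ and $V = xba^2$ gives $\phi(ab^2 xba^2) = \phi(xba^2\cdot ab^2) = \phi(xba^3 b^2)$. What remains is precisely $\phi((ab)^3 x) - \phi(xba^3 b^2)$, which is the left-hand side.

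There is no conceptual difficulty; the only obstacle is careful bookkeeping, both in multiplying out $[b,a](ab)^3 x$ and $[a,b]\cdot babxba^2$ and in choosing the cyclic permutations in an order that makes the correct pairs cancel. The point of the lemma is then clear: it realizes the displacement $\phi((ab)^3 x) - \phi(xba^3 b^2)$ as a sum of two $\delta\phi$-terms, each bounded in absolute value by $D(\phi)$, which will be what powers the comparison $D(\bar{\phi}) \le D(\phi)$ in the proof of Lemma~\ref{lem:upper bound}.
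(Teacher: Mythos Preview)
Your proof is correct and follows essentially the same approach as the paper: expand the two coboundary terms, simplify the products $[b,a](ab)^3x$ and $[a,b]\cdot babxba^2$, then use antisymmetry to cancel $\phi([b,a])+\phi([a,b])$ and the class-function property to match the remaining terms via cyclic permutation. The only cosmetic difference is that the paper writes $[b,a](ab)^3x$ as $ba(ab)^2x$ rather than your equivalent $ba^2babx$.
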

\begin{proof}
By the definition of the coboundary operator, we have
\begin{multline}\label{eq:coboundary}
\delta\phi([b,a],(ab)^3x)+\delta\phi([a,b],babxba^2)=\phi((ab)^3x)+\phi([b,a])-\phi(ba(ab)^2x)\\
+\phi(babxba^2)+\phi([a,b])-\phi(ab^2xba^2).
\end{multline}
By the assumptions on $\phi$, we also have
\begin{align*}
\phi(ba(ab)^2x)&=\phi(babxba^2),\\
\phi([b,a])&=-\phi([a,b]),\\
\phi(ab^2xba^2)&=\phi(xba^3b^2).
\end{align*}
If we apply these equation for terms in the right-hand side of the equation (\ref{eq:coboundary}),
we obtain the desired equation.
\end{proof}

\begin{proof}[Proof of Lemma~\ref{lem:upper bound}]
Let $a$ and $b$ be elements in $G$.
Since $\delta\phi(a^{3^n},b^{3^n})=\phi(a^{3^n})+\phi(b^{3^n})-\phi(a^{3^n}b^{3^n})$, we have
\begin{align*}
\delta\bar{\phi}(a,b)&=\lim_{n\to\infty}\frac{\phi(a^{3^n})+\phi(b^{3^n})-\phi((ab)^{3^n})}{3^n}\\
&=\lim_{n\to\infty}\frac{\phi(a^{3^n}b^{3^n})-\phi((ab)^{3^n})}{3^n}.
\end{align*}
If we apply Lemma~\ref{lem:a3b3} for $x=(ab)^{3^k-3}, (ab)^{3^k-3}(ba^3b^2), \ldots, (ab)^3(ba^3b^2)^{3^{k-1}-1}, (ba^3b^2)^{3^{k-1}-1}$,
we obtain
\begin{align*}
|\phi((ab)^{3^k})-\phi((a^3b^3)^{3^{k-1}})|&=|\phi((ab)^{3^k})-\phi((ba^3b^2)^{3^{k-1}})|\\
&\le \sum_{i=0}^{3^{k-1}-1}|\phi((ab)^{3^k-3i}(ba^3b^2)^i)-\phi((ab)^{3^k-3(i+1)}(ba^3b^2)^{i+1})|\\
&\le 2D(\phi)\times 3^{k-1}.
\end{align*}
Furthermore, we have
\begin{align*}
|\phi((ab)^{3^n})-\phi(a^{3^n}b^{3^n})|&\le \sum_{k=1}^{n}|\phi((a^{3^{n-k}}b^{3^{n-k}})^{3^k})-\phi((a^{3^{n-k+1}}b^{3^{n-k+1}})^{3^{k-1}})|\\
&\le \sum_{k=1}^n(2D(\phi)\times 3^{k-1}).
\end{align*}
Hence, we obtain
\[
|\delta\bar{\phi}(a,b)|=\lim_{n\to\infty}\frac{|\phi(a^{3^n}b^{3^n})-\phi((ab)^{3^n})|}{3^n}\le D(\phi)
\]
for any $a$, $b\in G$.
\end{proof}

\subsection{Proof of Theorem~\ref{thm:defect} (ii)}\label{sec:lower bound}

We will prove Theorem~\ref{thm:defect} (ii).
Let $m$ be an even number greater than or equal to $4$.
By Remark~\ref{rem:meyer function}, we consider the Meyer function $\phi_g$ on the hyperelliptic mapping class group $\mathcal{H}_g$ instead of $\phi_{m,m/2}$.

\begin{lemma}[\cite{barge1992cem} Proposition 3.5]\label{lem:matrix rep}
For any $A\in\Sp(2g;\mathbb{Z})$,
\[
\Sign(\braket{\ ,\ }_{A^k,A})=\Sign\left(-J\sum_{i=1}^k(A^i-A^{-i})\right).
\]
\end{lemma}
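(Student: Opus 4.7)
\medskip

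\noindent\textbf{Proof proposal.} The plan is to identify $V_{A^k,A}$ explicitly with $\mathbb{R}^{2g}$ (generically), pull the bilinear form back to a $2g\times 2g$ symmetric matrix $N_k$, and then exhibit an explicit congruence that converts $N_k$ into $-J\sum_{i=1}^k(A^i-A^{-i})$. The non-degenerate case is then extended to all $A\in\Sp(2g;\mathbb{Z})$ by a density/continuity argument.

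First I would treat the generic case in which $A^k-I$ is invertible. The defining equation $(A^{-k}-I)v+(A-I)w=0$ of $V_{A^k,A}$ can then be solved uniquely for $v$ in terms of $w$, giving the isomorphism $w\mapsto(v,w)$ with $v=(A^k-I)^{-1}A^k(A-I)w$, and hence $v+w=(A^k-I)^{-1}(A^{k+1}-I)w$. Substituting into the definition of $\braket{\ ,\ }_{A^k,A}$ yields
\[
\braket{(v_1,w_1),(v_2,w_2)}_{A^k,A}=w_1^{T}(A^{k+1}-I)^{T}(A^k-I)^{-T}J(I-A)w_2,
\]
so the form is $w_1^{T}N_k w_2$ for the matrix $N_k=(A^{k+1}-I)^{T}(A^k-I)^{-T}J(I-A)$.

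Next I would simplify $N_k$ using the symplectic identity $(A^j)^{T}J=JA^{-j}$, which gives $(A^j-I)^{T}J=J(A^{-j}-I)$ for every $j$. Conjugating the transposed matrices through $J$ and collecting factors transforms $N_k$ into
\[
N_k=J(A^{-(k+1)}-I)(A^{-k}-I)^{-1}(I-A).
\]
Setting $S_k:=I+A+\cdots+A^{k-1}$, the factorization $A^{-k}-I=-A^{-k}S_k(A-I)$ collapses the middle product to $(A^{-k}-I)^{-1}(I-A)=A^kS_k^{-1}$, so that
\[
N_k=-J(A^k-A^{-1})S_k^{-1}.
\]
The main combinatorial point is the following identity, which I would verify by a direct telescoping calculation: $S_{-k}(A^k-A^{-1})=\sum_{i=1}^k(A^i-A^{-i})$, where $S_{-k}:=I+A^{-1}+\cdots+A^{-(k-1)}$. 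Combined with $S_k^{T}J=JS_{-k}$ (again from $(A^j)^{T}J=JA^{-j}$), this yields the congruence
\[
S_k^{T}N_k S_k=-S_k^{T}J(A^k-A^{-1})=-JS_{-k}(A^k-A^{-1})=-J\sum_{i=1}^{k}(A^i-A^{-i}).
\]
Since $A^k-I=S_k(A-I)$ is invertible, so is $S_k$, and congruence by $S_k$ preserves signatures. This proves the formula whenever $A^k-I$ is invertible.

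The remaining obstacle is the degenerate locus where $A^k-I$ is singular, which is where the proposed parametrization collapses and where in principle the signature could jump. I would handle this either by showing directly that the radical of $\braket{\ ,\ }_{A^k,A}$ matches that of $-J\sum(A^i-A^{-i})$ (both coming from $\ker(A^k-I)$, on which the form plainly vanishes), so the induced non-degenerate forms on the respective quotients coincide after the above identification; or alternatively by continuity, approximating $A$ in $\Sp(2g;\mathbb{R})$ by nearby symplectic matrices $A_\epsilon$ for which $A_\epsilon^k-I$ is invertible and noting that both sides of the claimed equation are lower semicontinuous in the usual sense and agree on a dense open set once the radicals are accounted for. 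The main subtlety of the proof is thus not the symplectic bookkeeping but this degeneracy analysis, since the signature function is not continuous on the whole space of symmetric matrices.
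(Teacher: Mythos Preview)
The paper does not prove this lemma at all: it is stated with a citation to Barge--Ghys \cite{barge1992cem}, Proposition~3.5, and used as a black box. So there is no ``paper's own proof'' to compare against; your proposal is an independent derivation.

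Your argument in the generic case is correct. The parametrization of $V_{A^k,A}$ by $w$, the symplectic transpose identity $(A^j)^TJ=JA^{-j}$, the reduction $N_k=-J(A^k-A^{-1})S_k^{-1}$, and the telescoping identity $S_{-k}(A^k-A^{-1})=\sum_{i=1}^k(A^i-A^{-i})$ all check out line by line, and $A^k-I=S_k(A-I)$ invertible forces $S_k$ invertible, so the congruence is legitimate. This is essentially how the result is proved in the literature.

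The one place where your sketch is genuinely incomplete is the degenerate case. Your continuity suggestion is not usable as stated: signature is neither upper nor lower semicontinuous on symmetric matrices (a rank drop can change it in either direction), and ``agreeing on a dense open set'' does not propagate to the closure. The radical route is the right one, but it requires more than noting that $\ker(A^k-I)$ kills the form; when $A^k-I$ is singular the space $V_{A^k,A}$ has dimension strictly larger than $2g$, and you must identify the full radical of $\braket{\ ,\ }_{A^k,A}$ on that larger space and match the induced nondegenerate quotient with the quotient of $\mathbb{R}^{2g}$ by the kernel of $-J\sum(A^i-A^{-i})$. This is doable (and is what Barge--Ghys carry out), but it is not a one-line observation.
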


Let $c_i$, $d_i^+$, and $d_i^-$ denote the Dehn twists along the simple closed curves in Figure~\ref{figure:cd}.
\begin{figure}[htbp]
\begin{center}
\includegraphics[width=6cm]{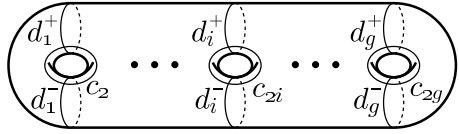}
\end{center}
\caption{curves in $\Sigma_g$}
\label{figure:cd}
\end{figure}
\begin{lemma}\label{lem:lower bound}
\[
\delta\bar{\phi_g}(c_2^2c_4^2\cdots c_{2g}^2,d_1^+d_1^-d_2^+d_2^-\cdots d_g^+d_g^-)=-2g.
\]
\end{lemma}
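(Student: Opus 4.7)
The plan is to compute each of $\bar{\phi_g}(X)$, $\bar{\phi_g}(Y)$, and $\bar{\phi_g}(XY)$ separately for $X := c_2^2 c_4^2 \cdots c_{2g}^2$ and $Y := d_1^+d_1^-\cdots d_g^+d_g^-$, and then substitute into
\[
\delta\bar{\phi_g}(X,Y) = \bar{\phi_g}(X) + \bar{\phi_g}(Y) - \bar{\phi_g}(XY).
\]

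I would handle $\bar{\phi_g}(X)$ first. The curves $c_2, c_4, \ldots, c_{2g}$ of Figure~\ref{fig1} are pairwise non-consecutive in the chain and hence disjoint, so by Lemma~\ref{braid}(a) the twists $t_{c_{2i}}$ commute pairwise and Lemma~\ref{quasi1}(b) (together with homogeneity) gives $\bar{\phi_g}(X) = 2\sum_{i=1}^{g}\bar{\phi_g}(t_{c_{2i}})$. Under $\mathcal{P}:\mathcal{H}_g\to\mathcal{M}_0^{2g+2}$ each $t_{c_i}$ maps to the half-twist $\sigma_i$ (cf.\ Section~\ref{section:symmetric mcg}), and all half-twists are $\mathcal{M}_0^{2g+2}$-conjugate, so by Lemma~\ref{quasi1}(a) the value $\bar{\phi_g}(t_{c_{2i}})$ is independent of $i$. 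Via the identification $\bar{\phi_g}\leftrightarrow\bar{\phi}_{m,m/2}$ of Remark~\ref{rem:meyer function}, Theorem~\ref{theorem:meyer function}(ii) at $m=2g+2$, $r=2$, $j=g+1$ evaluates this common value to $-g/(2g+1)$, giving $\bar{\phi_g}(X) = -2g^2/(2g+1)$. The term $\bar{\phi_g}(Y)$ yields to the same recipe: from Figure~\ref{figure:cd} the $d_i^{\pm}$ are pairwise disjoint so the $t_{d_i^{\pm}}$ commute, and each $\bar{\phi_g}(t_{d_i^{\pm}})$ is extracted from Theorem~\ref{theorem:meyer function}(ii) once the $\mathcal{P}$-images of the $t_{d_i^{\pm}}$ are written as explicit words in the half-twists $\sigma_j$.

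The crux is $\bar{\phi_g}(XY)$. Passing through the symplectic representation $\rho:\mathcal{H}_g\to\Sp(2g;\mathbb{Z})$, each Dehn twist is an explicit symplectic transvection, so $A := \rho(XY)$ is a tractable matrix product. The cobounding identity $\delta\phi_g = \tau_g$ combined with Lemma~\ref{lem:matrix rep} yields
\[
\bar{\phi_g}(XY) = \phi_g(XY) - \lim_{n\to\infty}\frac{1}{n}\sum_{k=1}^{n-1}\mathrm{Sign}\!\left(-J\sum_{i=1}^k(A^i - A^{-i})\right).
\]
The deliberate pairing of $c_{2i}^2$ with $d_i^+d_i^-$ strongly suggests that $A$ is conjugate to a block-diagonal symplectic matrix with $g$ independent $2\times 2$ blocks, one per index $i$; in that case the Cesaro average collapses to $g$ rank-$2$ signature computations. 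Assembling the three evaluated quantities and cancelling the $(2g+1)$-denominators produces $-2g$ (the arithmetic can be sanity-checked at $g=1$ and $g=2$).

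The \textbf{main obstacle} is this last step: exhibiting $\rho(XY)$ in a block-diagonal (or otherwise manageable) form that makes Lemma~\ref{lem:matrix rep} tractable. Once the block structure is pinned down, the signature average and the final bookkeeping are routine; without it, the general signature formula becomes unwieldy.
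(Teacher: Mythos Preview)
Your plan is essentially the paper's, and the ``main obstacle'' you flag dissolves immediately: the factors $c_{2i}^2 d_i^+ d_i^-$ already commute pairwise in $\mathcal{H}_g$ (since $c_{2i}$, $d_j^{\pm}$ are disjoint for $i\ne j$), so Lemma~\ref{quasi1}(b) gives $\bar{\phi}_g(XY)=\sum_{i=1}^g\bar{\phi}_g(c_{2i}^2 d_i^+ d_i^-)$ without ever passing to $\Sp(2g;\mathbb{Z})$. The paper uses exactly this decomposition to reduce the lemma to
\[
\bar{\phi}_g(c_{2i}^2)+\bar{\phi}_g(d_i^+d_i^-)-\bar{\phi}_g(c_{2i}^2 d_i^+ d_i^-)=-2\qquad(1\le i\le g).
\]
There is, however, a small gap in your execution of that last piece. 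Your formula $\bar{\phi}_g(XY)=\phi_g(XY)-\lim_n\frac{1}{n}\sum_k\tau_g((XY)^k,XY)$ requires the actual Meyer-function value $\phi_g(c_{2i}^2 d_i^+ d_i^-)$, and Theorem~\ref{theorem:meyer function} does not supply it (that element is not conjugate to any $\sigma_1\cdots\sigma_{r-1}$). The paper sidesteps this by computing the whole coboundary at once: writing each $\bar{\phi}_g(x)=\lim_n\phi_g(x^n)/n$ and telescoping, the three $\phi_g$-terms collapse to the single cocycle value $\tau_g(c_{2i}^2,(d_i^+)^2)$ (using $\rho(d_i^+)=\rho(d_i^-)$), and what remains is three Ces\`aro averages $\lim_n\frac{1}{n}\sum_k\tau_g(x^k,x)$, each of which is a rank-$2$ signature computation via Lemma~\ref{lem:matrix rep} after conjugating to $i=1$. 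So your block-diagonal intuition is exactly right, but realize it at the group level and compute the coboundary in one stroke rather than the three $\bar{\phi}_g$-values separately.
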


\begin{proof}[Proof of Lemma~\ref{lem:lower bound}]
Since the pairs ($c_i$, $c_j$), ($d_i^+d_i^-$, $d_j^+d_j^-$), and ($c_i$, $d_j^+d_j^-$) mutually commute when $i\ne j$, we have
\begin{align*}
&\delta\bar{\phi}_g(c_2^2c_4^2\cdots c_{2g}^2,d_1^+d_1^-d_2^+d_2^-\cdots d_g^+d_g^-)\\
=&\, \bar{\phi}_g(c_2^2c_4^2\cdots c_{2g}^2)+\bar{\phi}_g(d_1^+d_1^-d_2^+d_2^-\cdots d_g^+d_g^-)-\bar{\phi}_g(c_2^2d_1^+d_1^-c_4^2d_2^+d_2^-\cdots c_{2g}^2d_g^+d_g^-)\\
=&\, \sum_{i=1}^g(\bar{\phi}_g(c_{2i}^2)+\bar{\phi}_g(d_i^+d_i^-)-\bar{\phi}_g(c_{2i}^2d_i^+d_i^-)).
\end{align*}
Hence, It suffices to prove $\bar{\phi}_g(c_{2i}^2)+\bar{\phi}_g(d_i^+d_i^-)-\bar{\phi}_g(c_{2i}^2d_i^+d_i^-)=-2$ for $1\le i\le g$.
Since $\rho(d_i^+)=\rho(d_i^-)$, we have
\begin{align*}
&\bar{\phi}_g(c_{2i}^2)+\bar{\phi}_g(d_i^+d_i^-)-\bar{\phi}_g(c_{2i}^2d_i^+d_i^-)\\
=\,&-\lim_{n\to\infty}\frac{1}{n}\left\{\phi_g((c_{2i}^2d_i^+d_i^-)^n)-\phi_g((c_{2i}^2)^n)-\phi_g((d_i^+d_i^-)^n)\right\}\\
=\,&\lim_{n\to\infty}\frac{1}{n}\sum_{k=1}^{n-1}\left\{\tau_g((c_{2i}^2d_i^+d_i^-)^k,c_{2i}^2d_i^+d_i^-)-\tau_g(c_{2i}^{2i},c_{2i}^2)-\tau_g((d_i^+d_i^-)^i,d_i^+d_i^-)\right\}+\tau_g(c_{2i}^2,d_i^+d_i^-)\\
=\,&\lim_{n\to\infty}\frac{1}{n}\sum_{k=1}^{n-1}\left\{\tau_g((c_{2i}^2(d_i^+)^2)^k,c_{2i}^2(d_i^+)^2)-\tau_g(c_{2i}^{2i},c_{2i}^2)-\tau_g((d_i^+)^{2i},(d_i^+)^2)\right\}+\tau_g(c_{2i}^2,(d_i^+)^2).
\end{align*}
There exists a mapping class $\psi_i$ such that $\psi_i c_{2i}\psi_i^{-1}=c_2$ and $\psi_i d_{i}^+\psi_i^{-1}=d_i^+$ for $i=2,\ldots, g$.
Since the Meyer cocycle satisfies the property
\[
\tau_g(xyx^{-1},xzx^{-1})=\tau_g(y,z)
\]
for $x,y,z\in\mathcal{M}_g$,  we have
\begin{align*}
&\lim_{n\to\infty}\sum_{k=1}^{n-1}\frac{1}{n}\{\tau_g((c_{2i}^2(d_i^+)^2)^k,c_{2i}^2(d_i^+)^2)-\tau_g(c_{2i}^{2i},c_{2i}^2)-\tau_g((d_i^+)^{2i},(d_i^+)^2)\}+\tau_g(c_{2i}^2,(d_i^+)^2)\\
=\,&\lim_{n\to\infty}\sum_{k=1}^{n-1}\frac{1}{n}\{\tau_g((c_2^2(d_1^+)^2)^k,c_2^2(d_1^+)^2)-\tau_g(c_2^{2i},c_2^2)-\tau_g((d_1^+)^{2i},(d_1^+)^2)\}+\tau_g(c_2^2,(d_1^+)^2).
\end{align*}
Let us consider the case when $g=1$. Since
\[
\rho(c_2^2)=
\begin{pmatrix}
1&2\\
0&1
\end{pmatrix}
,\ 
\rho((d_1^+)^2)=
\begin{pmatrix}
1&0\\
-2&1
\end{pmatrix}
,\text{ and }
\rho(c_2^2(d_1^+)^2)=
\begin{pmatrix}
-3&2\\
-2&1
\end{pmatrix},
\]
we have
\begin{align*}
-J\sum_{k=1}^n(\rho(c_2^{2k})-\rho(c_2^{-2k}))
&=
\begin{pmatrix}
0&0\\
0&2n(n+1)
\end{pmatrix},
\\
-J\sum_{k=1}^n(\rho(d_1^+)^{2k}-\rho(d_1^+)^{-2k})
&=
\begin{pmatrix}
2n(n+1)&0\\
0&0
\end{pmatrix},
\\
-J\sum_{k=1}^n(\rho((c_2^2(d_1^+)^2)^k)-\rho((c_2^2(d_1^+)^2)^{-k}))
&=
\sum_{k=1}^n4k(-1)^k
\begin{pmatrix}
-1&1\\
1&-1
\end{pmatrix}\\
&=\{(-1)^n(2n+1)-1\}
\begin{pmatrix}
-1&1\\
1&-1
\end{pmatrix},
\end{align*}
By Lemma 1.1, we obtain
\begin{eqnarray}
&&\lim_{n\to\infty}\sum_{k=1}^{n-1}\frac{\tau_g(c_2^{2k},c_2^2)}{n}=\lim_{n\to\infty}\sum_{k=1}^{n-1}\frac{\tau_g((d_1^+)^{2k},(d_1^+)^2)}{n}=1,\label{eq:cocycle} \\
&&\lim_{n\to\infty}\sum_{k=1}^{n-1}\frac{\tau_g((c_2^2(d_1^+)^2)^k,c_2^2(d_1^+)^2)}{n}=0.
\end{eqnarray}
When $g\ge2$, the same calculation also shows the equation (\ref{eq:cocycle}).
It is an easy calculation to show that
\[
\tau_g(c_2^2,d_1^+d_1^-)=0.
\]
Therefore, we obtain
\[
\bar{\phi}_g(c_{2i}^2)+\bar{\phi}_g(d_i^+d_i^-)-\bar{\phi}_g(c_{2i}^2d_i^+d_i^-)=-2.
\]
\end{proof}

In the same way as the equation (\ref{eq:cocycle}), we have $\tau_g(s_0^{i},s_0)=1$.
Hence, we obtain
\begin{eqnarray*}
\bar{\phi}_g(s_0)&=&-\lim_{n\rightarrow \infty}\frac{\sum_{i=1}^{n-1}\tau_g(s_0^{i},s_0)}{n}+\phi_g(s_0)=-1+\phi_g(s_0), \ {\rm and}\\
\bar{\phi}_g(s_h)&=&\phi_g(s_0).
\end{eqnarray*}
By Lemma 3.3 and 3.5 in Endo \cite{endo2000mss}, we have 
\[
\bar{\phi}_g(t_{s_0})=-\frac{g}{2g+1},
\text{ and } 
\bar{\phi}_g(t_{s_h})=-\frac{4h(g-h)}{2g+1}.
\]

\begin{remark}\label{rem}
By Theorem~\ref{thm:defect} and~\ref{Ba}, $\bar{\phi}_{g}$ gives the lower bounds for $\scl_{{\mathcal H}_{g}}(t_{s_{h}})$ $(j=0,\ldots,g-1)$ corresponding to ones given in \cite{monden2012ubs}.
\end{remark}
\begin{remark}
By Theorem~\ref{thm}, Theorem~\ref{Ba}, and remark~\ref{rem}, 
we have $\scl_{{\mathcal M}_{1}}(t_{c})=1/12$.
Let $\rho : {\mathcal M}_{1} \cong SL(2,\mathbb{Z}) \rightarrow PSL(2,\mathbb{Z})$ be the natural quotient map.
It is easily seen that for all $x\in {\mathcal M}_{1}$, $\scl_{{\mathcal M}_{1}}(x)=\scl_{PSL(2,\mathbb{Z})}(\rho(x))$.
Louwsma determined $\scl_{PSL(2,\mathbb{Z})}(y)=1/12$ for $y=\rho(t_c)$ (see \cite{louwsma2011erq}). 
\end{remark}

\begin{proof}[Proof of Corollary~\ref{cor}]
From the above arguments, we have
\begin{eqnarray*}
D(\bar{\phi}_g) = 2g &=& \left|\sum_{i=1}^g(\bar{\phi}_g(c_{2i}^2)+\bar{\phi}_g(d_i^+d_i^-)-\bar{\phi}_g(c_{2i}^2 d_i^+d_i^-))\right|.
\end{eqnarray*}
On the other hand, for any homogeneous quasimorphism $\phi$ on ${\mathcal H}_{g}$, 
from Definition~\ref{def:homo} we have
\begin{eqnarray*}
D(\phi)&\geq& |\phi(c_2^2 \cdots c_{2g}^2)+\phi(d_1^+ d_1^- \cdots d_g^+ d_g^-)-\phi(c_2^2 \cdots c_{2g}^2 d_1^+ d_1^- \cdots d_g^+ d_g^-)|\\
&=&|\phi(c_2^2 \cdots c_{2g}^2)+\phi(d_1^+ d_1^-  \cdots d_g^+ d_g^-)-\phi((c_2^2 d_1^+ d_1^-) \cdots (c_{2g}^2 d_g^+ d_g^-))|\\
&=&\left|\sum_{i=1}^g(\phi(c_{2i}^2)+\phi(d_i^+d_i^-)-\phi(c_{2i}^2 d_i^+d_i^-))\right|.
\end{eqnarray*}
Therefore, we have
\begin{eqnarray}\label{element}
\frac{1}{2}=\sup_{\phi\in Q}\frac{|\sum_{i=1}^g(\phi(c_{2i}^2)+\phi(d_i^+d_i^-)-\phi(c_{2i}^2 d_i^+d_i^-))|}{2D(\phi)},
\end{eqnarray}
where $Q$ is the set of homogeneous quasimorphisms on ${\mathcal H}_{g}$ 
with positive defects.

By Lemma~\ref{lem:chain}, we have 
$(d_1^+ c_{2} d_1^-)^4=s_1$, 
$(d_i^+ c_{2i} d_i^-)^4=s_{i-1} s_i$ $(i=2,\ldots,g-1)$, and 
$(d_g^+ c_{2g} d_g^-)^4=s_{g-1}$.
Since $c_{2i}$ commutes with $s_j$,
$(c_{2} d_1^-d_1^+ c_{2} d_1^-d_1^+ )^2=s_1$, 
$(c_{2i} d_i^-d_i^+ c_{2i} d_i^-d_i^+ )^2=s_{i-1} s_i$, and 
$(c_{2g} d_g^-d_g^+ c_{2g} d_g^-d_g^+ )^2=s_{g-1}$.
By Lemma~\ref{braid}, $c_{2i} d_i^-d_i^+ c_{2i}$ commutes with $d_i^-d_i^+ $ 
for $i=1,\ldots, g$, as is easy to check.
It follows that 
$(c_{2} d_1^-d_1^+ c_{2})^2=s_1 (d_1^-d_1^+)^{-2}$, 
$(c_{2i} d_i^-d_i^+ c_{2i})^2=s_{i-1} s_i (d_i^-d_i^+)^{-2}$, and 
$(c_{2g} d_g^-d_g^+ c_{2g})^2=s_{g-1} (d_g^-d_g^+)^{-2}$.
These equations give
\begin{eqnarray*}
2\phi(c_{2}^{2} d_1^-d_1^+)&=&\phi(s_1)-2\phi(d_1^-d_1^+),\\
2\phi(c_{2i}^{2} d_i^-d_i^+)&=&\phi(s_{i-1})+\phi(s_{i})-2\phi(d_i^-d_i^+), \ \ \ and\\
2\phi(c_{2g}^{2} d_g^-d_g^+)&=&\phi(s_{g-1})-2\phi(d_g^-d_g^+).
\end{eqnarray*}
Now the equation (\ref{element}) becomes
\begin{eqnarray*}
\frac{1}{2}=\sup_{\phi\in Q}\frac{|\sum_{i=1}^g(\phi(c_{2i}^2)+2\phi(d_i^+d_i^-))-\sum_{j=1}^{g-1}\phi(s_j)|}{2D(\phi)}.
\end{eqnarray*}
Let $c$ be a nonseparating simple closed curve which is disjoint from 
$s_{1},$$\ldots,$$ s_{g-1},$ 
$d_{2}^{+}$, $d_{2}^{-}$,$\ldots$, $d_{g-1}^{+}$, $d_{g-1}^{-}$.
Since $d_1^+=d_1^-$ and $d_g^+=d_g^-$ and these are the Dehn twists about nonseparating curves, 
we have
\begin{eqnarray*}
\frac{1}{2}=\sup_{\phi\in Q}\frac{|(2g+8)\phi(c)+2\sum_{i=2}^{g-1}\phi(d_i^+d_i^-)-\sum_{j=1}^{g-1}\phi(s_j)|}{2D(\phi)},
\end{eqnarray*}
By using Theorem~\ref{Ba} this completes the proof.
\end{proof}

%
\section{Proof of Theorem~\ref{thm}}\label{upper bound} 
In this section, we prove Theorem~\ref{thm}.

Let $c_1,\ldots, c_{2g+2}$ be nonseparating simple closed curves on $\Sigma_g$ as in Figure~\ref{fig1} 
and let $\phi$ be a homogeneous quasimorphism on ${\mathcal H}_g$. 
For simlicity of notation, we write $t_i$ instead of $t_{c_i}$
By $\iota=\iota^{-1}$, we have $t_{2g+1}^2 t_{2g}\cdots t_2t_1^2=(t_{2g}\cdots t_2)^{-1}$. 
Since each of the two boundary components of a regular neighborhood of $c_2\cup c_3\cup \cdots \cup c_{2g}$ is $c_{2g+2}$, 
by Lemma~\ref{lem:chain} we have $(t_{2g}\cdots t_2)^{2g}=t_{2g+2}^2$. 
Note that this relation holds in ${\mathcal H}_g$. 
Therefore, by Definition~\ref{def:homo} we have 
\begin{eqnarray}\label{eq1}
\phi(t_{2g+1}^2 t_{2g}\cdots t_2t_1^2)=-\phi(t_{2g}\cdots t_2)=-\frac{1}{g}\phi(t_{2g+2}). 
\end{eqnarray}
By Lemma~\ref{quasi1} (a) and~\ref{braid} (a) we have the following equation. 
\begin{eqnarray*}
\phi(t_{2g+1}^2 t_{2g} \cdots t_3 t_2 \underline{t_1^2}) &=& \phi( \underline{t_1^2} t_{2g+1}^2 t_{2g} \cdots t_3t_2) \ \ ({\rm by \ Lem.~\ref{quasi1}})\\
&=&\phi(t_{2g+1}^2t_{2g}\cdots t_3\underline{t_1^2}\underline{\underline{t_2}}) \ \ ({\rm by \ Lem.~\ref{braid}})\\
&=&\phi(\underline{\underline{t_2}}t_{2g+1}^2t_{2g}\cdots t_4t_3t_1^2) \ \ ({\rm by \ Lem.~\ref{quasi1}})\\
&=&\phi(t_{2g+1}^2t_{2g}\cdots t_4\underline{\underline{t_2}}\underline{t_3t_1^2}) \ \ ({\rm by \ Lem.~\ref{braid}})\\
&=&\phi(t_{2g+1}^2t_{2g}\cdots t_6t_5\underline{t_3t_1^2}\underline{\underline{t_4t_2}}) \ \ ({\rm by \ Lem.~\ref{braid} \ and~\ref{braid}})\\
&=&\phi(t_{2g+1}^2t_{2g}\cdots t_6\underline{\underline{t_4t_2}}\underline{t_5t_3t_1^2}) \ \ ({\rm by \ Lem.~\ref{braid} \ and~\ref{braid}})\\
&=&\phi(t_{2g+1}^2t_{2g}\cdots t_7\underline{t_5t_3t_1^2}\underline{\underline{t_6t_4t_2}}) \ \ ({\rm by \ Lem.~\ref{braid} \ and~\ref{braid}})\\
&=&\phi(t_{2g+1}^2t_{2g}\cdots t_8\underline{\underline{t_6t_4t_2}}\underline{t_7t_5t_3t_1^2}) \ \ ({\rm by \ Lem.~\ref{braid} \ and~\ref{braid}})\\
&=&\phi(t_{2g+1}^2t_{2g}\cdots t_9\underline{t_7t_5t_3t_1^2}\underline{\underline{t_8t_6t_4t_2}}) \ \ ({\rm by \ Lem.~\ref{braid} \ and~\ref{braid}})\\
&\vdots& \ \ \ \ \ \ \ \ \ \ \ \ \ \ \ \vdots \\
&=&\phi((t_{2g+1}^2t_{2g-1}\cdots t_5t_3t_1^2)(t_{2g}t_{2g-4}\cdots t_4t_2)).
\end{eqnarray*}

From Definition~\ref{def:homo} and the equation (\ref{eq1})
\begin{eqnarray*}
D(\phi)&\geq& |\phi((t_{2g+1}^2\cdots t_3t_1^{2})(t_{2g}\cdots t_4t_2))-
\phi(t_{2g+1}^2\cdots t_3t_1^2)-\phi(t_{2g}\cdots t_4t_2)|\\
 &=&|-\frac{1}{g}\phi(t_{2g+2})-\phi(t_{2g+1}^2\cdots t_3t_1^2)-\phi(t_{2g}\cdots t_4t_2)|,
\end{eqnarray*}
where $D(\phi)$ is the defect of $\phi$.
From Lemma~\ref{quasi1}, Lemma~\ref{conj} and Lemma~\ref{braid} we have
\begin{eqnarray*}
D(\phi)\geq\left|\frac{1}{g}\phi(t_1)+(g+3)\phi(t_1)+g\phi(t_1)\right|=\left(2g+3+1/g\right)|\phi(t_1)|.
\end{eqnarray*}
By Theorem~\ref{Ba} we have $\displaystyle \scl_{{\mathcal H}_g}(t_1)\leq \frac{1}{2(2g+3+1/g)}$.
This completes the proof of Theorem~\ref{thm}.
\begin{remark}\rm
By a similar argument to the proof of Theorem~\ref{thm},
for all $m\geq 4$,
we can show $\displaystyle \scl_{{\mathcal M}_0^m}(\sigma_1)=\frac{1}{2\{m+1+2/(m-2)\}}$. 
\end{remark}

\section{Calculation of quasimorphisms}\label{section:calc-quasi}
In this section, we prove Theorem~\ref{theorem:meyer function}.
To prove it, we perform a straightforward and elementary calculation of the Hermitian form $\braket{\ ,\ }_{\tilde{\sigma}^k,\tilde{\sigma}}$ on the eigenspace $V^{\omega^j}$.

Let $p:\Sigma_g\to S^2$ be the regular branched $m$-cyclic covering on $S^2$ with $m$ branched points as in Section~\ref{section:cobounding function}.
Choose a point in $p^{-1}(*)$, and denote it by $\tilde{*}\in \Sigma_g$. 
We denote by $\tilde{\alpha}_i$ the lift of $\alpha_i$ which starts at $\tilde{*}$. 
Note that $\tilde{\alpha}_i\tilde{\alpha}_{i+1}^{-1}$ is a loop in $\Sigma_g$ while $\tilde{\alpha}_i$ is an arc.
We denote by $e_i(k)\in H_1(\Sigma_g;\mathbb{Z})$ the homology class represented by $\tilde{\alpha}_1^k\tilde{\alpha}_i\tilde{\alpha}_{i+1}^{-1}\tilde{\alpha}_1^{-k}$.
\begin{lemma}
The set of the homology classes $\{e_i(k)\}_{\begin{subarray}{c}1\le i\le m-2\\0\le k\le m-2\end{subarray}}$ is a basis of $H_1(\Sigma_g;\mathbb{Z})$.
\end{lemma}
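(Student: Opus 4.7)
The plan is to compute $H_1(\Sigma_g;\mathbb{Z})$ via a $\mathbb{Z}/m$-equivariant cellular chain complex obtained by lifting a CW structure on $S^2$ through $p$, and then identify the proposed elements as a distinguished $\mathbb{Z}$-basis.

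First I would equip $S^2$ with a CW structure having a single $0$-cell at $*$, one $1$-cell per loop $\alpha_i$ (so the $1$-skeleton is $\bigvee_{i=1}^m S^1$), $m$ ``inner'' $2$-cells $D_i$ attached via $\alpha_i$, and one ``outer'' $2$-cell $D^{\mathrm{out}}$ attached along $\alpha_1\alpha_2\cdots\alpha_m$. Lifting through $p$, $\Sigma_g$ acquires $m$ $0$-cells $\tilde{*}_0,\ldots,\tilde{*}_{m-1}$, $m^2$ $1$-cells $\tilde{\alpha}_{i,k}$ (each running from $\tilde{*}_k$ to $\tilde{*}_{k+1}$ modulo $m$, with $\tilde{\alpha}_i=\tilde{\alpha}_{i,0}$), and $2m$ $2$-cells: one $\tilde{D}_i$ over each branch point, which wraps $m$ times and so has $\partial\tilde{D}_i=\sum_{k=0}^{m-1}\tilde{\alpha}_{i,k}$, and $m$ sheet-lifts $\tilde{D}^{\mathrm{out}}_k$ with $\partial\tilde{D}^{\mathrm{out}}_k=\sum_{i=1}^m\tilde{\alpha}_{i,\,k+i-1}$ (indices mod $m$).

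Writing out the based loop $\tilde{\alpha}_1^k\tilde{\alpha}_i\tilde{\alpha}_{i+1}^{-1}\tilde{\alpha}_1^{-k}$, the $\tilde{\alpha}_1^{\pm k}$ prefix and suffix contribute opposite $1$-chains that cancel, leaving the compact form $e_i(k)=\tilde{\alpha}_{i,k}-\tilde{\alpha}_{i+1,k}\in\ker\partial_1$; the proposed set then has $(m-1)(m-2)=2g$ elements, matching the rank of $H_1(\Sigma_g;\mathbb{Z})$. A natural basis of $\ker\partial_1$ is $\{f_i(k):=\tilde{\alpha}_{i,k}-\tilde{\alpha}_{i+1,k}:1\le i\le m-1,\,0\le k\le m-1\}\cup\{\sum_{k=0}^{m-1}\tilde{\alpha}_{1,k}\}$, of rank $m^2-m+1$, and $\mathrm{im}\,\partial_2$ has rank $2m-1$ (the $2m$ boundary generators satisfy the single relation $\sum_i\partial\tilde{D}_i=\sum_k\partial\tilde{D}^{\mathrm{out}}_k$). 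Since $(m^2-m+1)-(2m-1)=(m-1)(m-2)$, it suffices to display a triangular elimination with $\pm 1$ pivots: use $\partial\tilde{D}_1$ to eliminate the sheet-sum; the differences $\partial\tilde{D}_i-\partial\tilde{D}_{i-1}=-\sum_k f_{i-1}(k)$ (for $i=2,\ldots,m$) to eliminate $f_{i-1}(m-1)$; and $\partial\tilde{D}^{\mathrm{out}}_k$ (for $k=1,\ldots,m-1$) to eliminate $f_{m-1}(k-1)$.

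The main obstacle is ensuring this elimination is strictly triangular with unit pivots, not merely $\mathbb{Q}$-nondegenerate; this requires careful ordering of the basis so that each substitution is lower-triangular in the chosen ordering. A conceptually cleaner alternative is to observe that $H_1(\Sigma_g;\mathbb{Z})$ is a module over $R:=\mathbb{Z}[t]/(1+t+\cdots+t^{m-1})$ (the transfer argument forces $\sum_k t^k=0$ on $H_1$, i.e., $V^1=0$) and to prove that it is $R$-free of rank $m-2$ with basis $e_1,\ldots,e_{m-2}$; since $\{1,t,\ldots,t^{m-2}\}$ is a $\mathbb{Z}$-basis of $R$ and $t^ke_i=e_i(k)$, the lemma follows immediately.
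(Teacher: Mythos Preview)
Your argument is correct and genuinely different from the paper's. The paper proceeds via the fundamental group: it applies the Schreier method to the subgroup $\pi_1(\Sigma_g-p^{-1}\{q_i\})\le\pi_1(S^2-\{q_i\})$ with transversal $T=\{\alpha_1^k\}$, then uses van Kampen to pass to $\pi_1(\Sigma_g)$, concludes that the $e_i(k)$ \emph{generate} $H_1(\Sigma_g;\mathbb{Z})$, and finishes by counting (Riemann--Hurwitz gives $\mathrm{rk}\,H_1=(m-1)(m-2)$, which equals the cardinality of the proposed set, so a generating set of the right size in a free abelian group is a basis). Your route instead builds the cellular chain complex of the lifted CW structure and performs an explicit unimodular elimination. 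The paper's approach is shorter and more conceptual---the whole argument is ``generate and count''---while yours is more elementary in that it avoids Reidemeister--Schreier and van Kampen and yields explicit chain representatives $e_i(k)=\tilde{\alpha}_{i,k}-\tilde{\alpha}_{i+1,k}$; your triangularity concern is real but does work out (after step~2 one has $\sum_{l}f_j(l)\equiv 0$, and then the $j=m-1$ part of $\partial\tilde D^{\mathrm{out}}_k$ contributes precisely $-f_{m-1}(k-1)$ modulo these relations, with all remaining terms in $f_j(\cdot)$ for $j\le m-2$). Your closing $R$-module remark is a clean reformulation, but note that proving $R$-freeness on $e_1,\ldots,e_{m-2}$ is exactly the content of the lemma, so it does not shortcut the work unless you supply an independent argument for freeness.
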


\begin{proof}
We use the Schreier method.
Let $T$ denote a Schreier transversal $T=\{\alpha_1^k\}_{i=0}^{m-1}$,
and $S$ a generating set $S=\{\alpha_i\}_{i=1}^{m-1}$ of $\pi_1(S^2-\{q_i\}_{i=1}^m)$.
The subgroup $\pi_1(\Sigma_g-\{p^{-1}(q_i)\}_{i=1}^m)$ is generated by
\[
\{(rs(\overline{rs})^{-1}\,|\,r\in T, s\in S\}
=\{\alpha_1^k\alpha_i\alpha_1^{-k-1}\}_{\begin{subarray}{c}2\le i\le m-1\\ 0\le k\le m-2\end{subarray}}
\cup \{\alpha_1^{m-1}\alpha_i\}_{1\le i\le m-1}.
\]
By van Kampen's theorem, the group $\pi_1(\Sigma_g)$ is obtained by adding the relation $\alpha_i^m=1$ to $\pi_1(\Sigma_g-\{p^{-1}(q_i)\}_{i=1}^m)$.
Thus, the set $\{\alpha_1^k\alpha_i\alpha_{i+1}^{-1}\alpha_1^{-k}\}_{\begin{subarray}{c}1\le i\le m-2\\ 0\le k\le m-2\end{subarray}}$
generates the group $\pi_1(\Sigma_g)$. 
It implies that $\{e_i(k)\}_{\begin{subarray}{c}1\le i\le m-2\\0\le k\le m-2\end{subarray}}$ is a generating set of $H_1(\Sigma_g;\mathbb{Z})$.

By the Riemann-Hurwitz formula, $H_1(\Sigma_g;\mathbb{Z})$ is a free module of rank $2g=(m-1)(m-2)$,
and it is equal to the order of the set $\{e_i(k)\}_{\begin{subarray}{c}1\le i\le m-2\\0\le k\le m-2\end{subarray}}$.
Therefore, the set $\{e_i(k)\}_{\begin{subarray}{c}1\le i\le m-2\\0\le k\le m-2\end{subarray}}$ is a basis of the free module $H_1(\Sigma_g;\mathbb{Z})$.
\end{proof}

\subsection{The intersection form and the action of $\tilde{\sigma_i}$}
Let $j$ be an integer with $1\le j\le m-1$. 
Firstly,
we find a basis of $V^{\omega^j}\subset H_1(\Sigma_g;\mathbb{C})$,
and calculate intersection numbers.

\begin{lemma}\label{lem:intersection}
The intersection numbers of $\{e_i(k)\}_{\begin{subarray}{c}1\le i\le m-2\\ 0\le k\le m-2\end{subarray}}$ are
\begin{align*}
e_i(k)\cdot e_{i'}(k)&=
\begin{cases}
-1& \text{ if }i=i'-1,\\
1& \text{ if }i=i'+1,\\
0& \text{otherwise},
\end{cases}&
e_i(k)\cdot e_{i'}(k+1)&=
\begin{cases}
-1& \text{ if }i=i',\\
1& \text{ if }i=i'-1,\\
0& \text{otherwise},
\end{cases}\\
e_i(k)\cdot e_{i'}(k-1)&=
\begin{cases}
-1& \text{ if }i=i',\\
1& \text{ if }i=i'+1,\\
0& \text{otherwise},
\end{cases}&
e_i(k)\cdot e_{i'}(k')&=0\quad \text{ if }|k-k'|\ge2.
\end{align*}
\end{lemma}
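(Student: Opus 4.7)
The plan is to represent each class $e_i(k)$ by an embedded bigon in a ``fan'' picture of the branched covering $p:\Sigma_g \to S^2$, reduce the number of cases using $t$-equivariance, and then count signed intersections directly.

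First I would fix mutually disjoint embedded arcs $\beta_1,\ldots,\beta_m$ in $S^2$ from $*$ to the branch points $q_1,\ldots,q_m$, arranged so that a small circle around $*$ meets them in the cyclic order matching $\alpha_1,\ldots,\alpha_m$. Cutting $S^2$ along $\bigcup_i \beta_i$ gives an open disk whose preimage under $p$ is a disjoint union of $m$ sheets $F_0,\ldots,F_{m-1}$ with $t(F_k)=F_{k+1}$. One may choose $\tilde{\alpha}_i$ to hug $\beta_i$, loop once around $q_i$, and return, so that it starts on $F_0$ and ends on $F_1$. Consequently the loop $\gamma_i := \tilde{\alpha}_i\tilde{\alpha}_{i+1}^{-1}$ is an embedded bigon consisting of one arc in $F_0$ joining the slits $\beta_i$ and $\beta_{i+1}$ and one arc in $F_1$ joining them back.

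As a $1$-chain the based loop in the definition of $e_i(k)$ equals $t^k(\tilde{\alpha}_i) - t^k(\tilde{\alpha}_{i+1})$, so $e_i(k) = (t^k)_* e_i(0)$ in $H_1(\Sigma_g;\mathbb{Z})$. Because $t$ is an orientation-preserving diffeomorphism the intersection pairing is $t$-invariant, and so
\[
e_i(k) \cdot e_{i'}(k') = e_i(0) \cdot e_{i'}(k'-k).
\]
It therefore suffices to evaluate $e_i(0) \cdot e_{i'}(\ell)$ for $\ell \in \{-1,0,1\}$ and to confirm vanishing for $|\ell|\geq 2$. The vanishing is immediate: the representative of $e_i(0)$ is supported on $F_0\cup F_1$ while that of $e_{i'}(\ell)$ is supported on $F_\ell\cup F_{\ell+1}$, and these are disjoint when $|\ell|\geq 2$.

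For the remaining three cases I would perturb the bigons to be transverse and count crossings. When $\ell=0$ the bigons share both sheets but intersect only along a common slit, which forces $|i-i'|=1$, and the local orientation check gives sign $-1$ for $i'=i+1$ (equivalently $+1$ for $i'=i-1$). When $\ell=1$ only $F_1$ is shared, and the $F_1$-arc of $\gamma_i$ meets the $F_1$-arc of $t\circ\gamma_{i'}$ in a single transverse point precisely when $i'\in\{i,i+1\}$, with signs $-1$ and $+1$ respectively, yielding the stated values for $e_i(0)\cdot e_{i'}(1)$. The case $\ell=-1$ follows from $\ell=1$ by antisymmetry. The main obstacle will be orientation bookkeeping---choosing compatible orientations of $\Sigma_g$, the sheets $F_k$, the arcs $\tilde{\alpha}_i$, and the boundary arcs of the slits $\beta_i$ so that each of the six nonzero entries comes out with the sign claimed. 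Once these conventions are fixed, each nonzero case reduces to a single transverse intersection in a small neighborhood of one slit.
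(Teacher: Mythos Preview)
Your proposal is correct and follows essentially the same approach as the paper: build the cover by gluing $m$ copies of the cut sphere, represent each $e_i(k)$ by an explicit loop supported on two adjacent sheets, and count transverse crossings by hand. The paper cuts along arcs $l_i$ that avoid $*$ (so the lifts of $*$ lie in the interior of each sheet, which slightly simplifies the picture) and then verifies only the single case $e_i(k)\cdot e_{i+1}(k+1)=1$ from a figure, declaring the remaining cases analogous; your explicit use of $t$-equivariance to reduce to $\ell=k'-k\in\{0,\pm1\}$ and the disjoint-support argument for $|\ell|\ge2$ organize the same computation more systematically but do not change the method.
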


\begin{proof}
We only prove the intersection $e_i(k)\cdot e_{i+1}(k+1)=1$
since the other cases are proved in the same way.

Let $l_i$ be the paths as in Figure~\ref{fig:li}.
\begin{figure}[htbp]
  \begin{center}
   \includegraphics[width=70mm]{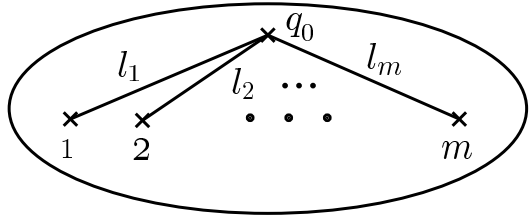}
  \end{center}
  \caption{paths $l_1,l_2,\ldots,l_m$}
  \label{fig:li}
\end{figure}
Consider the $m$-copies of the 2-sphere cut along $l_i$, and number these copies from $1$ to $m$.
For convenience, we call the first copy of the 2-sphere the $(m+1)$-th copy.
Gluing the left hand side of $l_i$ in the $k$-th copy to the right hand side of $l_i$ in the $(k+1)$-th copy for $k=1,2,\cdots, m$, 
we obtain a closed connected surface homeomorphic to $\Sigma_g$,
and it is naturally a covering space on $S^2$.
As in Figure~\ref{fig:e_ij1} and Figure~\ref{fig:e_ij2}, the loops representing $e_i(k)$ and $e_{i+1}(k+1)$,
intersect once positively in the $(k+1)$-th copy.
\begin{figure}[htbp]
\begin{tabular}{cc}
 \begin{minipage}{0.51\hsize}
  \begin{center}
   \includegraphics[width=0.85\hsize]{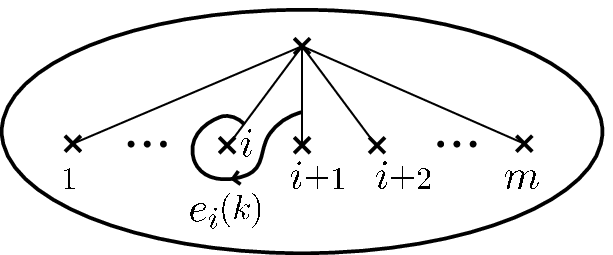}
  \end{center}
  \caption{the $k$-th copy}
  \label{fig:e_ij1}
 \end{minipage}
 &
 \begin{minipage}{0.51\hsize}
  \begin{center}
   \includegraphics[width=0.85\hsize]{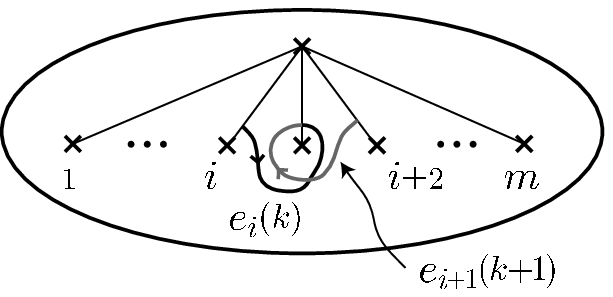}
  \end{center}
  \caption{the $(k+1)$-th copy}
  \label{fig:e_ij2}
 \end{minipage}
 \end{tabular}
\end{figure}

Hence, we have $e_i(k)\cdot e_{i+1}(k+1)=1$.
\end{proof}

For $1\le i\le m-2$,
we denote $w_i=\sum_{k=0}^{m-1}\omega^{-jk}e_i(k)$. 
Since $te_i(k)=e_i(k+1)$ for $1\le k\le m-2$ and $e_i(m-1)=-\sum_{k=0}^{m-2}e_i(k)$,
we have $w_i\in V^{\omega^j}$,
and the set $\{w_i\}_{i=1}^{m-2}$ is a basis of $V^{\omega^j}$.

\begin{lemma}\label{lem:w-intersection}
The intersecton numbers of $\{w_i\}_{1\le i\le m-2}$ are
\[
w_i\cdot w_{i'}=
\begin{cases}
d(1-\omega^j), &\text{ if }i=i'+1,\\
d(-\omega^{-j}+\omega^{j}), &\text{ if }i=i',\\
d(\omega^{-j}-1), &\text{ if }i=i'-1,\\
0, &\text{ otherwise}.
\end{cases}
\]
\end{lemma}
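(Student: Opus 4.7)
The plan is to expand the sesquilinear pairing $w_i\cdot w_{i'}$ linearly in the basis $\{e_i(k)\}$, exploit the $\mathbb Z/m\mathbb Z$-symmetry generated by the deck transformation $t$ to collapse the resulting double sum into a three-term single sum, and finally plug in the nonzero intersection numbers supplied by Lemma~\ref{lem:intersection}.

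Because the extended form is conjugate-linear in its first variable and linear in its second, expanding $w_i=\sum_{k=0}^{m-1}\omega^{-jk}e_i(k)$ gives
\[
w_i\cdot w_{i'}=\sum_{k,k'=0}^{m-1}\omega^{j(k-k')}\,e_i(k)\cdot e_{i'}(k').
\]
The deck transformation $t$ is an orientation-preserving diffeomorphism with $t_*e_i(k)=e_i(k+1)$ and $t_*^m=1$, so the real intersection number $e_i(k)\cdot e_{i'}(k')$ depends only on $(i,i')$ and on $n:=(k-k')\bmod m$. Since for each residue class $n\in\mathbb Z/m\mathbb Z$ there are exactly $m$ pairs $(k,k')\in\{0,\ldots,m-1\}^2$ with $k-k'\equiv n$, the double sum collapses to
\[
w_i\cdot w_{i'}=m\sum_{n=0}^{m-1}\omega^{jn}\,a_{i,i'}(n),\qquad a_{i,i'}(n):=e_i(k)\cdot e_{i'}(k-n).
\]

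By Lemma~\ref{lem:intersection}, $a_{i,i'}(n)$ vanishes unless $n\in\{0,1,m-1\}$ and $|i-i'|\le 1$, so in every case the sum reduces to the three-term expression
\[
m\bigl(a_{i,i'}(0)+\omega^j a_{i,i'}(1)+\omega^{-j}a_{i,i'}(m-1)\bigr).
\]
Reading off the $\pm 1$'s for the four cases $i'=i$, $i'=i+1$, $i'=i-1$, and $|i-i'|\ge 2$, and simplifying, then yields exactly $m(\omega^j-\omega^{-j})$, $m(\omega^{-j}-1)$, $m(1-\omega^j)$, and $0$, as claimed.

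The only real obstacle is careful sign bookkeeping of the $a_{i,i'}(n)$'s. The three displayed branches of Lemma~\ref{lem:intersection} are not independent: they are coupled by antisymmetry of the real intersection form, $a_{i,i'}(n)=-a_{i',i}(-n)$. The cleanest path is therefore to pin down the overall sign from the single identity $e_i(k)\cdot e_{i+1}(k+1)=1$ that is explicitly proved there, and deduce the other values by antisymmetry and $t$-invariance. A useful sanity check at the end is that the diagonal entries $w_i\cdot w_i$ must be purely imaginary, as required by the skew-Hermitian nature of the form; this forces the choice of signs in $a_{i,i}(\pm 1)$ and confirms the value $w_i\cdot w_i=m(\omega^j-\omega^{-j})$.
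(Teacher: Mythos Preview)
Your proof is correct and follows essentially the same route as the paper: expand $w_i\cdot w_{i'}$ sesquilinearly in the $e_i(k)$'s and evaluate the resulting double sum using the intersection numbers from Lemma~\ref{lem:intersection}. You make the $t$-invariance that collapses the double sum explicit and are more careful about sign consistency via antisymmetry, but the underlying computation is identical to the paper's one-line evaluation.
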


\begin{proof}
By Lemma~\ref{lem:intersection}, we have
\begin{align*}
w_i\cdot w_i&=\sum_{k=0}^{d-1}\sum_{l=0}^{d-1}\omega^{j(k-l)}e_i(k)\cdot e_i(l)=d(-\omega^{-j}+\omega^{j}),\\
w_i\cdot w_{i+1}&=\sum_{k=0}^{d-1}\sum_{l=0}^{d-1}\omega^{j(k-l)}e_i(k)\cdot e_{i+1}(l)=d(\omega^{-j}-1),
\end{align*}
and $w_i\cdot w_k=0$ when $|i-k|\ge 2$.
\end{proof}

Let $\tilde{\sigma}=\tilde{\sigma}_1\cdots\tilde{\sigma}_{r-1}$.
Secondly, we will find eigenvectors in $V^{\omega^j}$ with respect to the action by $\tilde{\sigma}$.

\begin{lemma}\label{lem:act on homology}
Let $i$ be an integer such that $1\le i\le m-1$.
Then, we have
\[
(\tilde{\sigma}_i)_*e_l(k)=
\begin{cases}
e_l(k)+e_{l+1}(k),& \text{if }2\le i\le m-1\text{, and }l=i-1,\\
-e_l(k-1),& \text{if }l=i,\\
e_{l-1}(k-1)+e_l(k),& \text{if }l=i+1,\\
e_l(k),& \text{if }l\ne i-1,i,i+1.
\end{cases}
\]
\end{lemma}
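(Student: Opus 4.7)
The plan is to compute via the known half-twist action on $\pi_1(S^2-\{q_i\}_{i=1}^m,*)$, lift it to arcs in $\Sigma_g$, and translate composite arcs into $H_1$-classes.

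Because $\supp s_i$ is disjoint from $*$ and meets $\{q_j\}$ in only $q_i,q_{i+1}$, the lift $\tilde s_i$ fixes $\tilde *$ and restricts to the identity outside $p^{-1}(\supp s_i)$; in particular $\tilde s_i(\tilde\alpha_j)=\tilde\alpha_j$ whenever $j\notin\{i,i+1\}$, which covers the case $l\notin\{i-1,i,i+1\}$. For the other two arcs I exploit $p\circ\tilde s_i=s_i\circ p$: the path $\tilde s_i\circ\tilde\alpha_k$ is the unique lift of $s_i\circ\alpha_k$ starting at $\tilde *$. Plugging in the standard half-twist formulas $s_i(\alpha_i)=\alpha_{i+1}$ and $s_i(\alpha_{i+1})=\alpha_{i+1}^{-1}\alpha_i\alpha_{i+1}$ and lifting arc by arc (the lift of $\alpha_k$ starting at $t^a(\tilde *)$ is $t^a\tilde\alpha_k$ and ends at $t^{a+1}(\tilde *)$) yields
\[
\tilde s_i(\tilde\alpha_i)=\tilde\alpha_{i+1},\qquad
\tilde s_i(\tilde\alpha_{i+1})=(t^{-1}\tilde\alpha_{i+1})^{-1}\cdot(t^{-1}\tilde\alpha_i)\cdot\tilde\alpha_{i+1}.
\]

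Because $\tilde s_i$ commutes with the deck transformation $t$, the induced map on $H_1$ satisfies $(\tilde\sigma_i)_*\circ t_*=t_*\circ(\tilde\sigma_i)_*$, and since $e_l(k)=t_*^k e_l(0)$ it suffices to verify each remaining case for $k=0$. For each, I substitute the formulas above into the loop $\tilde\alpha_l\tilde\alpha_{l+1}^{-1}$ representing $e_l(0)$, decompose the resulting concatenation into loops based at $\tilde *$, and identify each sub-loop in $H_1$. When $l=i-1$ the output is $[\tilde\alpha_{i-1}\tilde\alpha_{i+1}^{-1}]=e_{i-1}(0)+e_i(0)$. When $l=i$, a pair $\tilde\alpha_{i+1}\tilde\alpha_{i+1}^{-1}$ cancels from the middle, leaving $(t^{-1}\tilde\alpha_i)^{-1}(t^{-1}\tilde\alpha_{i+1})=t^{-1}(\tilde\alpha_i^{-1}\tilde\alpha_{i+1})$, whose class equals $t_*^{-1}(-e_i(0))=-e_i(-1)$. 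When $l=i+1$, the output splits as $(t^{-1}\tilde\alpha_{i+1})^{-1}(t^{-1}\tilde\alpha_i)$ (giving $e_i(-1)$ by the same reasoning) followed by $\tilde\alpha_{i+1}\tilde\alpha_{i+2}^{-1}$ (giving $e_{i+1}(0)$).

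The only real obstacle is bookkeeping: the intermediate concatenations pass through various lifts $t^a(\tilde *)$ of the base point, so one must repeatedly use basepoint-independence of $H_1$ — equivalently, that the cyclic rotations of a loop and the $t_*$ action correspond to the index shift $k\mapsto k\pm1$ on the generators $e_i(k)$. These shifts are exactly what produce the $k-1$'s appearing in the cases $l=i$ and $l=i+1$ of the final formula.
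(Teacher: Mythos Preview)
Your argument is correct and follows the same strategy as the paper: compute the half-twist action on $\pi_1(S^2-\{q_i\},*)$ and lift. The paper's own proof simply records the action of $(\sigma_i)_*$ on the products $\alpha_{l}\alpha_{l+1}^{-1}$ for $l=i-1,i,i+1$ and says ``by lifting these loops to the covering space $\Sigma_g$, we obtain what we want''; you do the equivalent computation one level finer, tracking the lifted arcs $\tilde\alpha_i,\tilde\alpha_{i+1}$ and their endpoints under the deck action, and you make explicit the reduction to $k=0$ via $(\tilde\sigma_i)_*\circ t_*=t_*\circ(\tilde\sigma_i)_*$. There is no substantive difference in method, only in the level of detail displayed.
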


\begin{proof}
Recall that $e_i(k)$ is the homology class represented by the loop $\tilde{\alpha}_1^k\tilde{\alpha}_i\tilde{\alpha}_{i+1}^{-1}\tilde{\alpha}_1^{-k}$.
In the fundamental group $\pi_1(S^2-\{q_i\}_{i=1}^m)$, we have
\begin{align*}
(\sigma_i)_*(\alpha_{i-1}\alpha_i^{-1})&=\alpha_{i-1}\alpha_{i+1}^{-1}=(\alpha_{i-1}\alpha_i^{-1})(\alpha_i\alpha_{i+1}^{-1}),\\
(\sigma_i)_*(\alpha_i\alpha_{i+1}^{-1})&=\alpha_{i+1}(\alpha_{i+1}^{-1}\alpha_i\alpha_{i+1})^{-1}=\alpha_{i+1}^{-1}(\alpha_{i}\alpha_{i+1}^{-1})^{-1}\alpha_{i+1},\\
(\sigma_i)_*(\alpha_{i+1}\alpha_{i+2}^{-1})&=(\alpha_{i+1}^{-1}\alpha_i\alpha_{i+1})\alpha_{i+2}^{-1}=\alpha_{i+1}^{-1}(\alpha_i\alpha_{i+1}^{-1})\alpha_{i+1}(\alpha_{i+1}\alpha_{i+2}^{-1}).
\end{align*}
By lifting these loops to the covering space $\Sigma_g$, 
we obtain what we want.
\end{proof}

By Lemma~\ref{lem:act on homology}, the matrix representations of the actions of $\{\tilde{\sigma}_i\}_{i=1}^{m-1}$ on $V^{\omega^j}$ with respect to the basis $\{w_i\}_{1\le i\le m-2}$ are calculated as
\begin{eqnarray*}
(\tilde{\sigma}_1)_*=
\begin{pmatrix}
-\omega^{-j}&\omega^{-j}&O\\
0&1&O\\
O&O&I_{m-4}
\end{pmatrix},&
(\tilde{\sigma}_i)_*=
\begin{pmatrix}
I_{i-1}&O&O\\
O&L&O\\
O&O&I_{m-i-4}
\end{pmatrix},\\
(\tilde{\sigma}_{m-2})_*=
\begin{pmatrix}
I_{m-4}&O&O\\
O&1&O\\
O&1&-\omega^{-j}
\end{pmatrix},&
(\tilde{\sigma}_{m-1})_*=
\begin{pmatrix}
I_{m-3}&v\\
O&-1+\sum_{k=1}^{m-2}\omega^{-jk}
\end{pmatrix},
\end{eqnarray*}
where
\[
L=
\begin{pmatrix}
1&0&0\\
1&-\omega^{-j}&\omega^{-j}\\
0&0&1
\end{pmatrix},
v=(1,1+\omega^{-j},\ldots,\sum_{k=0}^{m-3}\omega^{-jk})^T.
\]

Let $r$ be an integer with $2\le r\le m$, and put
\[
e'_r(k)=[\tilde{a}_1^{k}\tilde{a}_r(\tilde{a}_1\tilde{a}_2\cdots \tilde{a}_r)^{-1}\tilde{a}_1^{-1}(\tilde{a}_1\tilde{a}_2\cdots \tilde{a}_r)\tilde{a}_1^{-k}].
\]

By Lemma~\ref{lem:act on homology}, we have
\begin{align*}
\tilde{\sigma}_*e_i(k)&=e_{i+1}(k),\text{ when }1\le i\le r-2,\\
\tilde{\sigma}_*e_r(k)&=-e_r'(k)+e_r(k),\\
\tilde{\sigma}_*e_{r-1}(k)&=e'_r(k),\\
\tilde{\sigma}_*e'_r(k)&=e_1(k-r+1).
\end{align*}
If we put $w'_r=\sum_{k=0}^{m-1}\omega^{-jk}e'_r(k)$,
$w'_r$ is contained in $V^{\omega^j}$.
For $i=1,2,\ldots, r-2$, we have
\begin{align*}
\tilde{\sigma}_*w_i&=\tilde{\sigma}_*\sum_{k=0}^{m-1}\omega^{-jk}e_i(k)
=\sum_{k=0}^{m-1}\omega^{-jk}e_{i+1}(k)
=w_{i+1},\\
\tilde{\sigma}_*w_{r-1}
&=\sum_{k=0}^{m-1}\omega^{-jk}(e_{r-1}(k)+e'_r(k))
=w_{r-1}+w'_r,\\
\tilde{\sigma}_*w'_r
&=\sum_{k=0}^{m-1}\omega^{-jk}e_1(k-r+1)
=\sum_{k=0}^{m-1}\omega^{-j(k+r-1)}e_1(k)
=\omega^{-(r-1)j}w_1.
\end{align*}

Let $\zeta=\exp(2\pi\sqrt{-1}/r)$ and
$v_i=\sum_{k=1}^{r-1}\omega^{(k-1)j}\zeta^{-(k-1)i}w_k+\omega^{(r-1)j}\zeta^{-(r-1)i}w'_r$.
Then, we have
\begin{align*}
\tilde{\sigma}_*v_i
&=\sum_{k=1}^{r-1}\omega^{(k-1)j}\zeta^{-(k-1)i}(\tilde{\sigma})_*w_k+\omega^{(r-1)j}\zeta^{-(r-1)i}(\tilde{\sigma})_*w'_r\\
&=\sum_{k=1}^{r-2}\omega^{(k-1)j}\zeta^{-(k-1)i}w_{k+1}+\omega^{(r-2)j}\zeta^{-(r-2)i}w'_r+\omega^{(r-1)j}\zeta^{-(r-1)i}\omega^{-pj}w_1\\
&=\omega^{-j}\zeta^i\left(\sum_{k=1}^{r-1}\omega^{(k-1)j}\zeta^{-(k-1)i}w_k+\omega^{(r-1)j}\zeta^{-(r-1)i}w'_r\right)\\
&=(\omega^{-j}\zeta^i)v_i.
\end{align*}
Hence, $v_i$ is an eigenvector with eigenvalue $\omega^{-j}\zeta^{i}$ with respect to the action by $\tilde{\sigma}$.
Note that the subspace generated by $\{w_i\}_{i=1}^{r-1}$ coincides with one generated by $\{v_i\}_{i=1}^{r-1}$.
Since $\tilde{\sigma}$ acts trivially on $\{w_i\}_{i=r+1}^{m-1}$, they are also eigenvectors with eigenvalue $0$.
Moreover, the set $\{v_i\}_{i=1}^{r-1}\cup \{w_{i}\}_{i=r+1}^{m-2}$ is linearly independent.

\begin{lemma}\label{lem:intersection of v}
Let $i,i'$ be integers such that $1\le i\le r-1$ and $1\le i'\le r-1$. Then, we have
\[
v_i\cdot v_{i'}=
\begin{cases}
\displaystyle 8rd\sqrt{-1}\sin \frac{\pi i}{r} \sin \frac{\pi j}{m}\sin \pi\left(\frac{i}{r}-\frac{j}{m}\right),&\text{ if }i=i',\\
0,&\text{ otherwise}.
\end{cases}
\]
\end{lemma}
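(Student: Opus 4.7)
\textbf{Proof plan for Lemma~\ref{lem:intersection of v}.}

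My plan is to separate the off-diagonal case $i \neq i'$ from the diagonal case $i = i'$. For the off-diagonal case, I would exploit the fact that $\tilde{\sigma}$ is represented by a diffeomorphism of $\Sigma_g$, so its induced action on $H_1(\Sigma_g;\mathbb{C})$ preserves the intersection pairing. Under the skew-hermitian form $h$ defined in Section~\ref{section:meyer cocycle}, this invariance is sesquilinear, so for any eigenvectors with eigenvalues $\lambda, \mu$ of $\tilde{\sigma}_*$ we have $h(v,v') = h(\tilde{\sigma}_* v, \tilde{\sigma}_* v') = \overline{\lambda}\mu\, h(v,v')$. Since we already established that $\tilde{\sigma}_* v_i = (\omega^{-j}\zeta^i) v_i$ and $\tilde{\sigma}_* v_{i'} = (\omega^{-j}\zeta^{i'}) v_{i'}$, we get $h(v_i,v_{i'}) = \zeta^{i'-i}h(v_i,v_{i'})$. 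For $i \neq i'$ with $1 \le i, i' \le r-1$, we have $\zeta^{i'-i} \neq 1$, so $h(v_i,v_{i'}) = 0$.

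For the diagonal case I would carry out the expansion
\[
h(v_i,v_i) = \sum_{k,l=1}^{r-1} \omega^{-(k-1)j}\zeta^{(k-1)i}\omega^{(l-1)j}\zeta^{-(l-1)i}\,h(w_k,w_l) + \text{(cross terms in $w'_r$)} + h(w'_r,w'_r),
\]
where the main summand is evaluated using Lemma~\ref{lem:w-intersection}. The intersection numbers involving $w'_r$ must still be supplied. I would compute them in two ways as a cross-check: directly, by extending the geometric argument of Lemma~\ref{lem:intersection} to the loop representing $e'_r(k)$; and indirectly, by using the invariance $h(u,v) = h(\tilde\sigma_* u,\tilde\sigma_* v)$ together with the relations $\tilde\sigma_* w_k = w_{k+1}$, $\tilde\sigma_* w_{r-1} = w_{r-1}+w'_r$, and $\tilde\sigma_* w'_r = \omega^{-(r-1)j} w_1$ to recursively express the needed pairings in terms of the known $h(w_k,w_l)$.

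After substitution, the main diagonal contribution $\sum_{k,l=1}^{r-1}\zeta^{(k-l)i}\omega^{-(k-l)j} h(w_k,w_l)$ reduces via Lemma~\ref{lem:w-intersection} to three geometric-series sums over $k$ (from the diagonal $k=l$, the subdiagonal $k = l+1$, and the superdiagonal $k = l-1$ contributions). These geometric sums, together with the cross and $w'_r$-terms, combine into an expression built out of the factors $(1-\zeta^i\omega^{-j})$, $(1-\omega^{-j})$, and $(1-\zeta^i)$ and their conjugates. Applying the identity $1 - e^{\sqrt{-1}\theta} = -2\sqrt{-1}\,e^{\sqrt{-1}\theta/2}\sin(\theta/2)$ to each factor, all phases cancel and the remainder becomes
\[
8rd\sqrt{-1}\,\sin\frac{\pi i}{r}\,\sin\frac{\pi j}{m}\,\sin\pi\!\left(\frac{i}{r}-\frac{j}{m}\right),
\]
as claimed, where the factor $r$ arises from the fact that $\zeta^{ri}=1$ produces an $r$-fold repetition in the outer sum and the factor $d$ is the one inherited from Lemma~\ref{lem:w-intersection}.

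The main obstacle is the bookkeeping in the diagonal case: getting the correct intersection numbers for $w'_r$ against $w_k$ and against itself, and then showing that every phase factor in the resulting trigonometric expression cancels to leave the clean product of three sines. The off-diagonal vanishing by the eigenvalue argument is essentially free; the diagonal computation is where all the signs and root-of-unity identities must line up, and I would devote most of the care to that bookkeeping rather than to the geometric-series evaluations, which are routine once the correct coefficients are in hand.
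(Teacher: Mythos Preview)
Your off-diagonal argument via distinct eigenvalues of an isometry is correct and is precisely the right way to dispose of that case; the paper omits this step entirely, presumably leaving it as obvious.

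For the diagonal case your plan will work, but it is more laborious than the paper's, and the extra labor is exactly the part you flag as the ``main obstacle''. The paper avoids the $w'_r$ intersections altogether by observing that
\[
v_i=\sum_{k=0}^{r-1}\omega^{kj}\zeta^{-ki}\,\tilde{\sigma}^k_* w_1,
\]
i.e.\ that $v_i$ is the $\tilde{\sigma}$-orbit of $w_1$ weighted by the appropriate characters. Since $\tilde{\sigma}_*$ preserves the intersection form, every term $\tilde{\sigma}^k_* w_1\cdot \tilde{\sigma}^l_* w_1$ equals $w_2\cdot \tilde{\sigma}^{\,l-k+1}_* w_1$, which is nonzero only when $\tilde{\sigma}^{\,l-k+1}_* w_1\in\{w_1,w_2,w_3\}$. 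The entire double sum then collapses to the three known values from Lemma~\ref{lem:w-intersection}, and the product-of-sines formula drops out immediately. In your approach you would instead compute $h(w'_r,w_k)$ and $h(w'_r,w'_r)$ separately and then verify cancellations among several geometric-series blocks; this is doable (and your ``indirect'' method via invariance is essentially rediscovering the paper's trick piecewise), but the cyclic-orbit viewpoint eliminates the bookkeeping you are worried about in one stroke. Note in particular that the factor $r$ in the answer comes simply from the $r$-fold repetition over the diagonal $l-k=\text{const}$ in the double sum, not from any $\zeta^{ri}=1$ collapse of an outer sum as you suggest.
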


\begin{proof}
Since the action of the mapping class group $\pi_0C_g(t)$ preserves the intersection form,
\begin{align*}
v_i\cdot v_i&=\sum_{k=0}^{r-1}\sum_{l=0}^{r-1}\omega^{(l-k)j}\zeta^{-(l-k)i}(\tilde{\sigma}^k_*w_1\cdot \tilde{\sigma}^l_*w_1)\\
&=\sum_{k=0}^{r-1}\sum_{l=0}^{r-1}\omega^{(l-k)j}\zeta^{-(l-k)i}(w_2\cdot \tilde{\sigma}^{l-k+1}_*w_1).
\end{align*}
Thus, Lemma~\ref{lem:w-intersection} implies
\begin{align*}
v_i\cdot v_i&=\omega^{(r-1)j}\zeta^{-(r-1)i}(w_2\cdot\tilde{\sigma}^{r}_*w_1)+\omega^{j}\zeta^{-i}(r-1)(w_2\cdot\tilde{\sigma}^2_*w_1)+r(w_2\cdot\tilde{\sigma}_*w_1)\\
&\quad+\omega^{-j}\zeta^{i}(r-1)(w_2\cdot w_1)+\omega^{-(r-1)j}\zeta^{(r-1)i}(w_2\cdot\tilde{\sigma}^{-r+2}_*w_1)\\
&=r\{(\omega^{-j}\zeta^{i})w_2\cdot w_1+(\omega^{j}\zeta^{-i})w_2\cdot w_3+w_2\cdot w_2\}\\
&=8rd\sqrt{-1}\sin \frac{\pi i}{r} \sin \frac{\pi j}{m}\sin \pi\left(\frac{i}{r}-\frac{j}{m}\right).
\end{align*}
\end{proof}

\subsection{Calculation of $\omega$-signatures and the cobounding functions $\phi_{m,j}$}
Lastly, we will calculate the Hermitian form $\braket{\ ,\ }_{\tilde{\sigma}^k,\tilde{\sigma}}$ and the $\omega$-signature.
We have already found the set of eigenvectors $\{v_i\}_{i=1}^{r-1}\cup \{w_{i}\}_{i=r+1}^{m-2}$ with respect to the action by $\tilde{\sigma}$ which is linearly independent.
Since $\dim V^{\omega^j}=m-2$, we need to find another eigenvector.

\begin{lemma}
\[
\sum_{k=1}^{rm}\tau(\tilde{\sigma}^k,\tilde{\sigma})=rm-2|mi-rj|.
\]
\end{lemma}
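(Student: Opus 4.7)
The plan is to diagonalize the Hermitian form $\braket{\cdot,\cdot}_{\tilde{\sigma}^k,\tilde{\sigma}}$ on $V^{\omega^j}$ in an eigenbasis of $\tilde{\sigma}=\tilde{\sigma}_1\cdots\tilde{\sigma}_{r-1}$, express the signature $\tau_{m,j}(\tilde{\sigma}^k,\tilde{\sigma})$ as a sum of signs of the diagonal entries, and evaluate the sum over $k=1,\ldots,rm$ by reducing it to an integer-crossing count.

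First I would complete the eigenbasis. The preceding lemmas produce $r-1$ eigenvectors $\{v_i\}_{i=1}^{r-1}$ of $\tilde{\sigma}$ with eigenvalues $\omega^{-j}\zeta^i$ and $m-r-2$ eigenvectors $\{w_i\}_{i=r+1}^{m-2}$ with eigenvalue $1$, accounting for $m-3$ of the $m-2$ dimensions of $V^{\omega^j}$. Extending the defining formula for $v_i$ to $i=0$ gives the missing eigenvector $v_0$ with eigenvalue $\omega^{-j}$, completing the basis. The orthogonality argument used in Section~\ref{section:construction} (to split $H_1(\Sigma_g;\mathbb{C})$ into eigenspaces of $t$) applies verbatim here to show that $\braket{\cdot,\cdot}_{\tilde{\sigma}^k,\tilde{\sigma}}$ is diagonal in this basis, with the $w_i$'s of eigenvalue $1$ contributing $0$ to the signature.

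Next, for an eigenvector $v$ with eigenvalue $\lambda=e^{2\pi\sqrt{-1}\theta}$ on the unit circle, a Hermitian analogue of the Barge--Ghys formula (Lemma~\ref{lem:matrix rep}) expresses $\braket{v,v}_{\tilde{\sigma}^k,\tilde{\sigma}}$ as a nonnegative real multiple of $\sin(\pi k\theta)\sin(\pi(k+1)\theta)\cdot(v\cdot v)/(\sqrt{-1}\sin\pi\theta)$. By Lemma~\ref{lem:intersection of v}, the sign of $(v_i\cdot v_i)/\sqrt{-1}$ matches that of $\sin\pi\theta_i$ for $\theta_i=i/r-j/m$, so after cancellation the sign of the contribution from $v_i$ at step $k$ reduces to $\mathrm{sgn}[\sin(\pi k\theta_i)\sin(\pi(k+1)\theta_i)]$.

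Finally, write $\theta_i=(mi-rj)/(rm)$ and set $p=mi-rj$. As $k$ varies from $0$ to $rm$, the real number $k\theta_i$ moves monotonically from $0$ to $p$, so its integer part changes parity exactly $|p|$ times; equivalently, there are $|p|$ values of $k\in\{0,1,\ldots,rm-1\}$ where $\sin(\pi k\theta_i)$ and $\sin(\pi(k+1)\theta_i)$ have opposite signs, and the remaining $rm-|p|$ pairs share a sign. Summing gives the net contribution $rm-2|mi-rj|$ from the $i$-th eigenvector, and adding these across $i$ (with eigenvalue-$1$ summands vanishing) yields the claimed identity. The main obstacle is the bookkeeping of zeros of $\sin(\pi k\theta_i)$ when $\gcd(p,rm)>1$, which is handled by observing that such zeros come in pairs symmetric under $k\leftrightarrow rm-k$ and hence cancel in the signed count without disturbing the crossing tally.
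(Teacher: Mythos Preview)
Your plan has a genuine gap at the first step: the vector $v_0$ does \emph{not} complete the basis of $V^{\omega^j}$. Recall that $w'_r$ lies in $\operatorname{span}(w_1,\ldots,w_{r-1})$; indeed a direct computation with the matrices of the $(\tilde{\sigma}_i)_*$ gives
\[
\tilde{\sigma}_* w_r = \sum_{k=1}^{r}\omega^{(k-r)j}w_k = w_r - w'_r,\qquad
w'_r = -\sum_{k=1}^{r-1}\omega^{(k-r)j}w_k.
\]
Hence $\operatorname{span}(w_1,\ldots,w_{r-1},w'_r)=\operatorname{span}(w_1,\ldots,w_{r-1})$ is $(r-1)$-dimensional, and the $r$ vectors $v_1,\ldots,v_{r-1},v_0(=v_r)$ all lie in it. In particular $v_0$ is a linear combination of $v_1,\ldots,v_{r-1}$ (in fact $\sum_{i=1}^r v_i = r\,w_1$), so your proposed set $\{v_i\}_{i=0}^{r-1}\cup\{w_i\}_{i=r+1}^{m-2}$ is linearly dependent and misses the direction of $w_r$. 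Were your count correct it would produce an extra summand $rm-2|{-rj}|=rm-2rj$ from $i=0$, contradicting the stated formula.

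The paper completes the basis instead with a vector $\beta$ built from $w_r$ and the $v_k$, chosen so that $\tilde{\sigma}_*\beta=\beta$; since its eigenvalue is $1$ it lies in the annihilator of $\braket{\,\cdot\,,\,\cdot\,}_{\tilde{\sigma}^k,\tilde{\sigma}}$ and contributes nothing to the signature, leaving exactly the sum over $i=1,\ldots,r-1$. The degenerate case $rj/m\in\mathbb{Z}$ also needs more than your symmetry remark: here $v_{i_0}$ itself has eigenvalue $1$ and $\tilde{\sigma}$ acquires a $2\times2$ Jordan block; the paper replaces $\beta$ by a generalized eigenvector $\beta'$ and computes $\braket{\beta',\beta'}_{\tilde{\sigma}^k,\tilde{\sigma}}>0$ for every $k$, so this block contributes $+rm=rm-2|mi_0-rj|$, matching the formula. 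Your ``zeros cancel by $k\leftrightarrow rm-k$'' does not address this Jordan-block contribution.
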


\begin{proof}
We first consider the case when $rj/m$ is not an integer.
Put
\[
\beta=\sum_{i=1}^{r}w_i-\frac{1}{r}\sum_{k=1}^r\frac{1}{1-\omega^j\zeta^{-k}}v_k.
\]
The subspace generated by $\{v_i\}_{i=1}^{r-1}$
and that generated by $\{w_i\}_{i=1}^{r-1}$ coincides.
Thus, the set $\{v_i\}_{i=1}^{r-1}$, $\beta$, $\{w_i\}_{i=r+1}^{m-2}$ forms a basis of $V^{\omega^j}$ when $1\le r\le m-2$,
and the set $\{v_i\}_{i=1}^{m-2}$ forms a basis of $V^{\omega^j}$ when $r=m-1$.
We have
\begin{align*}
\tilde{\sigma}_*\beta&=\sum_{i=2}^{r}w_i-\frac{1}{r}\sum_{k=1}^r\frac{\omega^j\zeta^{-k}}{1-\omega^j\zeta^{-k}}v_k\\
&=\sum_{i=2}^{r}w_i+\frac{1}{r}\sum_{k=1}^rv_k-\frac{1}{r}\sum_{k=1}^r\frac{1}{1-\omega^j\zeta^{-k}}v_k\\
&=\sum_{i=1}^{r}w_i-\frac{1}{r}\sum_{k=1}^r\frac{1}{1-\omega^j\zeta^{-k}}v_k\\
&=\beta.
\end{align*}
Note that $\beta$ and $\{w_i\}_{i=r+1}^{m-2}$ are in the annihilator of the Hermitian form $\braket{\ ,\ }_{\tilde{\sigma}^k,\tilde{\sigma}}$ since they have eigenvalue $1$  with respect to the action by $\tilde{\sigma}$.

By Lemma~\ref{lem:matrix rep}, we have
\begin{align*}
\quad\tau(\tilde{\sigma}^k,\tilde{\sigma})
&=\sum_{i=1}^r\sign\braket{v_i,v_i}_{\tilde{\sigma}^k,\tilde{\sigma}}\\
&=-\sum_{i=1}^r\sign\left((v_i\cdot v_i)\sum_{l=1}^{k}((\omega^{-j}\zeta^{i})^l-(\omega^j\zeta^{-i})^l)\right)\\
&=-\sum_{i=1}^{r}\sign\left((v_i\cdot v_i)(1-\omega^{j}\zeta^{-i})(1-\omega^{-j}\zeta^{i})\sum_{l=1}^{k}((\omega^{-j}\zeta^{i})^l-(\omega^j\zeta^{-i})^l)\right).
\end{align*}

By the equations
\begin{align*}
&\quad(1-\omega^{j}\zeta^{-i})(1-\omega^{-j}\zeta^{i})\sum_{l=1}^{k}((\omega^{-j}\zeta^{i})^l-(\omega^j\zeta^{-i})^l)\\
&=8\sqrt{-1}\sin\left(-\frac{\pi (k+1)j}{m}+\frac{\pi (k+1)i}{r}\right)\sin\left(-\frac{\pi k j}{m}+\frac{\pi k i}{r}\right)\sin\left(-\frac{\pi j}{m}+\frac{\pi i}{r}\right)
\end{align*}
and Lemma~\ref{lem:intersection of v}, we have
\[
\quad\tau((\tilde{\sigma})^k,\tilde{\sigma})\\
=\sum_{i=1}^{r-1}\sign\left(\sin k\pi\left(\frac{i}{r}-\frac{j}{m}\right)\sin (k+1)\pi\left(\frac{i}{r}-\frac{j}{m}\right)\right).
\]

Since $rj/m$ is not an integer, $i/r-j/m$ is not zero.
Thus, we obtain
\begin{align*}
\sum_{k=1}^{rm}\tau((\tilde{\sigma})^k,\tilde{\sigma})
&=\sum_{i=1}^{r-1}\sum_{k=1}^{rm}\sign\left(\sin k\pi\left(\frac{i}{r}-\frac{j}{m}\right)\sin (k+1)\pi\left(\frac{i}{r}-\frac{j}{m}\right)\right)\\
&=\sum_{i=1}^{r-1}(rm-2|mi-rj|).
\end{align*}

Next, consider the case when $rj/m$ is an integer and $1\le r\le m-1$.
Denote this integer $rj/m$ by $i_0$. 
Then, the eigenvalue of $v_{i_0}$ is $1$,
and $v_{i_0}$ and $\{w_i\}_{i=r+1}^{m-2}$ are in the annihilator of $\braket{\ ,\ }_{\tilde{\sigma}^k,\tilde{\sigma}}$.
If we put
\[
\beta'=\sum_{i=1}^rw_i-\frac{1}{r}\sum_{\begin{subarray}{c}1\le k\le r\\k\ne i_0\end{subarray}}\frac{1}{1-\omega^j\zeta^{-k}}v_k,
\]
the set of the homology classes $\{v_i\}_{i=1}^{r-1}$, $\beta'$, $\{w_i\}_{i=r+1}^{m-2}$ forms a basis of $V^{\omega^j}$.
We have
\begin{align*}
\tilde{\sigma}\beta'&=\sum_{i=2}^{r}w_i-\frac{1}{r}\sum_{\begin{subarray}{c}1\le k\le r\\k\ne i_0\end{subarray}}\frac{\omega^j\zeta^{-k}}{1-\omega^j\zeta^{-k}}v_k\\
&=\sum_{i=2}^{r}w_i+\frac{1}{r}\sum_{\begin{subarray}{c}1\le k\le r\\k\ne i_0\end{subarray}}v_k-\frac{1}{r}\sum_{\begin{subarray}{c}1\le k\le r\\k\ne i_0\end{subarray}}\frac{1}{1-\omega^j\zeta^{-k}}v_k\\
&=\sum_{i=1}^{r}w_i-\frac{1}{r}v_{i_0}-\frac{1}{r}\sum_{\begin{subarray}{c}1\le k\le r\\k\ne i_0\end{subarray}}\frac{1}{1-\omega^j\zeta^{-k}}v_k\\
&=\beta'-\frac{1}{r}v_{i_0}.
\end{align*}
By Lemma~\ref{lem:matrix rep},
\[
\braket{\beta',\beta'}_{\tilde{\sigma}^k,\tilde{\sigma}}
=\beta'\cdot \frac{1}{r}\sum_{i=1}^k2iv_{i_0}
=\frac{k(k+1)}{r}\sum_{i=1}^r w_i\cdot v_{i_0}.
\]
Since the eigenvalues of $\{v_i\}_{i=1}^{r-1}$ are different from $1$,
the intersection $v_i\cdot v_{i_0}=0$ for $1\le i\le r-1$.
Since the subspace generated by $\{w_i\}_{i=1}^{r-1}$ and that generated by $\{v_i\}_{i=1}^{r-1}$ coincides, we also have $w_i\cdot v_{i_0}=0$.
Thus, we have
\begin{align*}
\frac{r}{k(k+1)}\braket{\beta',\beta'}_{\tilde{\sigma}^k,\tilde{\sigma}}
&=w_r\cdot v_{i_0}\\
&=w_r\cdot (w_{r-1}+w_r')\\
&=w_r\cdot \left(w_{r-1}-\sum_{k=0}^{r-1}\omega^{(k-r)j}w_k\right)\\
&=(1-\omega^{-j})w_r\cdot w_{r-1}\\
&=(1-\omega^{-j})(1-\omega^{j})>0.
\end{align*}

Moreover, since $v_i\cdot v_{i_0}=0$, Lemma~\ref{lem:matrix rep} implies $\braket{v_i, \beta'}_{\tilde{\sigma}^k,\tilde{\sigma}}=0$ for $1\le i\le r-1$.
Therefore, we have
\begin{align*}
\sum_{k=1}^{rm}\tau(\tilde{\sigma}^k,\tilde{\sigma})
&=\sum_{k=1}^{rm}\left(\sum_{i=1}^k\sign(\braket{v_i,v_i}_{\tilde{\sigma}^k,\tilde{\sigma}})
+\sign(\braket{\beta',\beta'}_{\tilde{\sigma}^k,\tilde{\sigma}})\right)\\
&=\sum_{k=1}^{rm}\left(\sum_{\begin{subarray}{c}1\le i\le r-1\\i\ne i_0\end{subarray}}\sign\left(\sin k\pi\left(\frac{i}{r}-\frac{j}{m}\right)\sin (k+1)\pi\left(\frac{i}{r}-\frac{j}{m}\right)\right)+1\right)\\
&=\sum_{\begin{subarray}{c}1\le i\le r-1\\i\ne i_0\end{subarray}}(rm-2|mi-rj|)+rm\\
&=\sum_{i=1}^{r-1}(rm-2|mi-rj|).
\end{align*}

In the case when $r=m$, the set $\{v_i\}_{i=1}^{r-2}$ forms a basis of $V^{\omega^j}$.
By a similar calculation, we can also prove what we want.
\end{proof}

\begin{lemma}\label{lemma:homogenization}
For $r=2,3,\ldots,m$, 
\[
\phi_{m,j}(\tilde{\sigma})-\bar{\phi}_{m,j}(\tilde{\sigma})
=\frac{2}{r}\left\{\left(\frac{rj}{m}-\left[\frac{rj}{m}\right]-\frac{1}{2}\right)^2-\frac{r^2j(m-j)}{m^2}-\frac{1}{4}\right\}.
\]
\end{lemma}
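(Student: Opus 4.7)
The plan is to use the cocycle identity $\delta\phi_{m,j}=\tau_{m,j}$ to express the difference $\phi_{m,j}(\tilde\sigma)-\bar\phi_{m,j}(\tilde\sigma)$ as a Ces\`aro average of values of $\tau_{m,j}$, then apply the full-period sum from the previous lemma, and finish by an elementary arithmetic rearrangement.

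First, since $\delta\phi_{m,j}=\tau_{m,j}$, induction on $n$ yields
\[
\phi_{m,j}(\tilde\sigma^{n})=n\phi_{m,j}(\tilde\sigma)-\sum_{k=1}^{n-1}\tau_{m,j}(\tilde\sigma^{k},\tilde\sigma),
\]
so that dividing by $n$ and letting $n\to\infty$,
\[
\phi_{m,j}(\tilde\sigma)-\bar\phi_{m,j}(\tilde\sigma)=\lim_{n\to\infty}\frac{1}{n}\sum_{k=1}^{n-1}\tau_{m,j}(\tilde\sigma^{k},\tilde\sigma).
\]
The explicit trigonometric expression for $\tau_{m,j}(\tilde\sigma^{k},\tilde\sigma)$ derived in the proof of the previous lemma is manifestly periodic in $k$ with period dividing $rm$, so the Ces\`aro limit equals the average over one full period. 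Combined with the previous lemma this gives
\[
\phi_{m,j}(\tilde\sigma)-\bar\phi_{m,j}(\tilde\sigma)=\frac{1}{rm}\sum_{i=1}^{r-1}\bigl(rm-2|mi-rj|\bigr)=(r-1)-\frac{2}{rm}\sum_{i=1}^{r-1}|mi-rj|.
\]

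To put the right-hand side into the closed form of the lemma, set $\alpha=rj/m$ and $q=[\alpha]$. Splitting the absolute values at $i=q$ and summing two arithmetic progressions yields
\[
\sum_{i=1}^{r-1}|i-\alpha|=\alpha(2q-r+1)-q(q+1)+\tfrac{r(r-1)}{2},
\]
which reduces the difference to $\tfrac{2}{r}\bigl[q(q+1)+\alpha(r-2q-1)\bigr]$. Expanding $(\alpha-q-\tfrac12)^{2}=\alpha^{2}-(2q+1)\alpha+q(q+1)+\tfrac14$ one verifies the algebraic identity
\[
q(q+1)+\alpha(r-2q-1)=\bigl(\alpha-q-\tfrac12\bigr)^{2}+\alpha(r-\alpha)-\tfrac14,
\]
and substituting $\alpha(r-\alpha)=r^{2}j(m-j)/m^{2}$ produces the expression asserted in the lemma.

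The main obstacle is the periodicity step in the degenerate case $rj/m\in\mathbb{Z}$, where the previous lemma showed that $\tilde\sigma$ is not semisimple on $V^{\omega^{j}}$: the vector $\beta'$ satisfies $\tilde\sigma\beta'=\beta'-v_{i_{0}}/r$. In that case the Hermitian form $\braket{\beta',\beta'}_{\tilde\sigma^{k},\tilde\sigma}$ is quadratic in $k$ rather than periodic; what saves the argument is that this form is always positive for $k\ne 0$, so its contribution to $\tau_{m,j}$ is constant in $k$ and the cocycle sequence remains periodic. Once this is checked the remaining algebraic simplification is routine.
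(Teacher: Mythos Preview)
Your argument is correct and follows essentially the same route as the paper: express $\phi_{m,j}(\tilde\sigma)-\bar\phi_{m,j}(\tilde\sigma)$ as the period average $\frac{1}{rm}\sum_{k=1}^{rm}\tau_{m,j}(\tilde\sigma^k,\tilde\sigma)$ via periodicity of the cocycle, insert the full-period sum $\sum_{i=1}^{r-1}(rm-2|mi-rj|)$ from the preceding lemma, and simplify. The paper's proof is terser (it writes the period average directly and does the arithmetic in one block), while you spell out the coboundary telescoping and the Ces\`aro step, and you explicitly address why periodicity survives in the non-semisimple case $rj/m\in\mathbb{Z}$; but the substance is the same.

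One remark: your computation, like the paper's own proof, actually yields
\[
\frac{2}{r}\left\{\Bigl(\tfrac{rj}{m}-\bigl[\tfrac{rj}{m}\bigr]-\tfrac{1}{2}\Bigr)^{2}+\frac{r^{2}j(m-j)}{m^{2}}-\frac{1}{4}\right\},
\]
with a $+$ in front of $r^{2}j(m-j)/m^{2}$, not the $-$ appearing in the displayed statement. This is a typo in the statement; the proof of Theorem~1.1 later in the paper uses the $+$ version.
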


\begin{proof}

\[
\tau(\tilde{\sigma}^k,\tilde{\sigma})
=\sum_{i=1}^{r-1}\sign\left(\sin k\pi\left(\frac{i}{r}-\frac{j}{m}\right)\sin (k+1)\pi\left(\frac{i}{r}-\frac{j}{m}\right)\right),
\]
Since we have $\tau(\tilde{\sigma}^{k+rm},\tilde{\sigma})=\tau(\tilde{\sigma}^k,\tilde{\sigma})$,
\begin{align*}
\quad\phi_{m,j}(\tilde{\sigma})-\bar{\phi}_{m,j}(\tilde{\sigma})
&=\frac{1}{rm}\sum_{k=1}^{rm}\tau(\tilde{\sigma}^k,\tilde{\sigma})\\
&=\frac{1}{rm}\sum_{i=1}^{r-1}(rm-2|mi-rj|)\\
&=r-1-\frac{2}{rm}\left(\sum_{i=1}^{[\frac{rj}{m}]}(rj-mi)+\sum_{[\frac{rj}{m}]+1}^{r-1}(mi-rj)\right)\\
&=\frac{2}{r}\left\{\left(\frac{rj}{m}-\left[\frac{rj}{m}\right]-\frac{1}{2}\right)^2+\frac{r^2j(m-j)}{m^2}-\frac{1}{4}\right\}.
\end{align*}
\end{proof}

\begin{proof}[Proof of Theorem~\ref{theorem:meyer function}]
Applying Lemma~\ref{lemma:homogenization} to the case when $r=m$,
we have
\[
\phi_{m,j}(\tilde{\sigma}_1\cdots\tilde{\sigma}_{m-1})-\bar{\phi}_{m,j}(\tilde{\sigma}_1\cdots\tilde{\sigma}_{m-1})
=\frac{2j(m-j)}{m}.
\]
Since
\[
\bar{\phi}_{m,j}(\tilde{\sigma}_1\cdots\tilde{\sigma}_{m-1})=\frac{1}{m}\bar{\phi}_{m,j}((\tilde{\sigma}_1\cdots\tilde{\sigma}_{m-1})^m)=0,
\]
we have 
\[
\phi_{m,j}(\tilde{\sigma}_1\cdots\tilde{\sigma}_{m-1})=\frac{2j(m-j)}{m}.
\]

Put 
$\varphi=\tilde{\sigma}_1\tilde{\sigma}_3\cdots\tilde{\sigma}_{m-1}, \psi=\tilde{\sigma}_2\tilde{\sigma}_4\cdots\tilde{\sigma}_{m-2}$
when $m$ is even, and
$\varphi=\tilde{\sigma}_1\tilde{\sigma}_3\cdots\tilde{\sigma}_{m-2}, \psi=\tilde{\sigma}_2\tilde{\sigma}_4\cdots\tilde{\sigma}_{m-1}$,
when $m$ is odd. 
As we saw in Section~\ref{upper bound},
$\tilde{\sigma}_1\cdots\tilde{\sigma}_{m-1}$ is conjugate to $\varphi\psi$.
By direct computation, if $(\varphi^{-1}_*-I_{2g})x+(\psi_*-I_{2g})y=0$ for $x,y\in V^{\omega^j}$, 
we have $(\varphi^{-1}_*-I_{2g})x=(\psi_*-I_{2g})y=0$.
Hence, we have $\tau_g(\varphi,\psi)=0$.

In the same way, for $i=1,2,\ldots,[(m-1)/2]$, we have
\begin{align*}
\tau_g(\tilde{\sigma}_1\tilde{\sigma}_3\cdots\tilde{\sigma}_{2i+1},\tilde{\sigma}_2\tilde{\sigma}_4\cdots\tilde{\sigma}_{2i})
&=\tau_g(\tilde{\sigma}_1\tilde{\sigma}_3\cdots\tilde{\sigma}_{2i+1},\tilde{\sigma}_2\tilde{\sigma}_4\cdots\tilde{\sigma}_{2i+2})
=0,\\
\tau_g(\tilde{\sigma}_1\tilde{\sigma}_3\cdots\tilde{\sigma}_{2i-1},\tilde{\sigma}_{2i+1})
&=\tau_g(\tilde{\sigma}_2\tilde{\sigma}_4\cdots\tilde{\sigma}_{2i},\tilde{\sigma}_{2i+2})
=0.
\end{align*}
Thus,
\[
\phi_{m,j}(\tilde{\sigma})
=(r-1)\phi_{m,j}(\tilde{\sigma}_1)
=\frac{r-1}{m-1}\phi_{m,j}(\tilde{\sigma}_1\cdots\tilde{\sigma}_{m-1})
=\frac{2(r-1)j(m-j)}{m(m-1)}.
\]

Hence, we obtain
\begin{align*}
\bar{\phi}_{m,j}(\sigma_1\cdots\sigma_{r-1})
&=\bar{\phi}_{m,j}(\tilde{\sigma})\\
&=\phi_{m,j}(\tilde{\sigma})-(\phi_{m,j}(\tilde{\sigma})-\bar{\phi}_{m,j}(\tilde{\sigma}))\\
&=-\frac{2}{r}\left\{\frac{jr(m-j)(m-r)}{m^2(m-1)}+\left(\frac{rj}{m}-\left[\frac{rj}{m}\right]-\frac{1}{2}\right)^2-\frac{1}{4}\right\}.
\end{align*}
\end{proof}

By the values of $\bar{\phi}_{m,1}$, we see:
\begin{remark}
Let $r$ be an integer such that $2\le r\le m$.
\[
\bar{\phi}_{m,1}(\tilde{\sigma}_1\cdots\tilde{\sigma}_{r-1})=0.
\]
\end{remark}
However, we do not know whether the quasimorphism $\bar{\phi}_{m,1}$ is trivial or not.

\end{document}